 \def\setlabelmargin#1{\labelmargin@=#1\relax }
\newcommand{\qed}{\nobreak \ifvmode \relax \else
      \ifdim\lastskip<1.5em \hskip-\lastskip
      \hskip1.5em plus0em minus0.5em \fi \nobreak
      \vrule height0.75em width0.5em depth0.25em\fi}
\newcounter{Definitioncount}
\newtheorem{theorem}{Theorem}
\newtheorem{proposition}[theorem]{Proposition}
\newtheorem{corollary}[theorem]{Corollary}
\newenvironment{example}[1][Example]{\begin{trivlist}
\item[\hskip \labelsep {\bfseries #1}] }{\end{trivlist}}
\newenvironment{proof}[1][Proof]{\begin{trivlist}
\item[\hskip \labelsep {\bfseries #1}]}{\end{trivlist}}
\newenvironment{definition}[1][Definition]{\begin{trivlist}
\item[\hskip \labelsep {\bfseries #1}] \refstepcounter{Definitioncount} \textbf{\arabic{Definitioncount}} }{\end{trivlist}}
\newenvironment{remark}[1][Remark]{\begin{trivlist}
\item[\hskip \labelsep {\bfseries #1}]}{\end{trivlist}}
\newcommand{\bm}{{\cal{M}}}
\newcommand{\bid}{{\cal{I}}}
\newcommand{\bc}{{\cal{C}}}
\newcommand{\ba}{{\cal{A}}}
\newcommand{\bk}{{\cal{K}}}
\newcommand{\by}{{\cal{Y}}}
\newcommand{\bx}{{\cal{X}}}
\newcommand{\bB}{{\cal{B}}}
\newcommand{\duof}{{\cal{F}}}
\newcommand{\oop}{\operatorname{op}}
\newcommand{\op}{^{\oop}}
\newcommand{\pps}{\operatorname{ps}}
\newcommand{\ps}{^{\pps}}
\newcommand{\lowps}{_{\pps}}
\newcommand{\ttract}{\operatorname{tract}}
\newcommand{\tract}{_{\ttract}}
\newcommand{\rrep}{\operatorname{rep}}
\newcommand{\rep}{_{\rrep}}
\newcommand{\ev}{\mathcal{V}}
\newcommand{\lom}{\ell{om}}
\newcommand{\rom}{r{om}}
\newcommand{\warmon}{\,\Box\,}
\numberwithin{equation}{section}
\begin{document}

\author{Thomas Booker\footnote{This author was supported by an Australian Postgraduate Award.}\, and Ross Street\footnote{This author gratefully acknowledges the support of an Australian Research Council Discovery Grant DP1094883.}}
\title{Tannaka duality and convolution for duoidal categories}
\date{}
\maketitle
\begin{center}
{\small{\emph{2000 Mathematics Subject Classification.} \quad 18D35; 18D10; 20J06}}
\\
{\small{\emph{Key words and phrases.} \quad duoidal; duoid; bimonoid; tannaka; monoidal category.}}
--------------------------------------------------------
\end{center}
\begin{abstract}
\noindent Given a horizontal monoid $M$ in a duoidal category $\duof$, we examine the relationship between bimonoid structures on $M$ and monoidal structures on the category $\duof^{\ast M}$ of right $M$-modules which lift the vertical monoidal structure of $\duof$.
We obtain our result using a variant of the Tannaka adjunction.
The approach taken utilizes hom-enriched categories rather than categories on which a monoidal category acts (``actegories'').
The requirement of enrichment in $\duof$ itself demands the existence of some internal homs, leading to the consideration of convolution for duoidal categories.
Proving that certain hom-functors are monoidal, and so take monoids to monoids, unifies classical convolution in algebra and Day convolution for categories.
Hopf bimonoids are defined leading to a lifting of closed structures on $\duof$ to  $\duof^{\ast M}$.
Warped monoidal structures permit the construction of new duoidal categories.
\end{abstract}
\tableofcontents

\section{Introduction}

This paper initiates the development of a general theory of duoidal categories.
In addition to providing the requisite definition of a duoidal $\ev$-category, various ``classical'' concepts are reinterpreted and new notions put forth, including:  produoidal $\ev$-categories, convolution structures and duoidal cocompletion, enrichment in a duoidal $\ev$-category, Tannaka duality, lifting closed structures to a category of representations (Hopf opmonoidal monads), and discovering new duoidal categories by ``warping'' the monoidal structure of another.
Duoidal categories, some examples, and applications, have appeared in the Aguiar-Mahajan book [\ref{AgMaha}] (under the name ``2-monoidal categories''), in the recently published work of Batanin-Markl [\ref{BatMarkl}] and in a series of lectures by the second author [\ref{StreetUCLlectures}].
Taken together with this paper, the vast potential of duoidal category theory is only now becoming apparent.

An encapsulated definition is that a duoidal $\ev$-category $\duof$ is a pseudomonoid in the 2-category  Mon($\ev$-Cat)  of monoidal $\ev$-categories, monoidal $\ev$-functors and monoidal $\ev$-natural transformations.
Since Mon($\ev$-Cat) is equivalently the category of pseudomonoids in $\ev$-Cat we are motivated to call a pseudomonoid in a monoidal bicategory a \emph{monoidale} (i.e. a monoidal object).
Thus a duoidal $\ev$-category is an object of $\ev$-Cat equipped with two monoidal structures, one called horizontal and the other called vertical, such that one is monoidal with respect to the other.
Calling such an object a \emph{duoidale} encourages one to consider duoidales in other monoidal bicategories, in particular $\bm = \ev$-Mod.
By giving a canonical monoidal structure on the $\ev = \bm ({\cal I},{\cal I})$ valued-hom for any left unit closed monoidal bicategory $\bm$ (see Section \ref{monoidalityofhomsection}), we see that a duoidale in $\bm = \ev$-Mod is precisely the notion of promonoidal category lifted to the duoidal setting, that is, a produoidal $\ev$-category.

A study of duoidal cocompletion (in light of the produoidal $\ev$-category material) leads to Section \ref{enrichduoidsection} where we consider enrichment in a duoidal $\ev$-category base.
We observe that if $\duof$ is a duoidal $\ev$-category then the vertical monoidal structure $\circ$ lifts to give a monoidal structure on $\duof_{h}$-Cat.
If $\duof$ is then a horizontally left closed duoidal $\ev$-category then $\duof$ is in fact a monoidale $(\duof_{h} , \hat{\circ}, \ulcorner\mathbf{1}\urcorner )$ in $\duof_{h}$-Cat with multiplication $\hat{\circ} : \duof_{h}\circ\duof_{h}\longrightarrow\duof_{h}$ defined using the evaluation of homs.
That is, $\duof_{h}$ is an $\duof_{h}$-category.

Section \ref{revisitetannakaathome} revisits the Tannaka adjunction as it pertains to duoidal $\ev$-categories.
We write $\duof_{h}\textrm{-Cat}\downarrow\ps\duof_{h}$ for the $2$-category $\duof_{h}\textrm{-Cat}\downarrow\duof_{h}$ restricted to having $1$-cells those triangles that commute up to an isomorphism.
Post  composition with the monoidale multiplication $\hat{\circ}$ yields a tensor product $\overline{\circ}$ on $\duof_{h}\textrm{-Cat}\downarrow\ps\duof_{h}$ and we write $\duof\textrm{-Cat}\downarrow\ps\duof$ for this monoidal $2$-category.
Let $\duof^{\ast M}$ be the $\duof_{h}$-category of Eilenberg-Moore algebras for the monad $-\ast M$.
There is a monoidal functor $\textrm{mod} : (\textrm{Mon}\, \duof)\op \longrightarrow  \duof\textrm{-Cat}\downarrow\ps\duof$ defined by taking a monoid $M$ to the object $U_{M} : \duof^{\ast M}\longrightarrow\duof_{h}$.
Here Mon $\duof$ is only being considered as a monoidal category, not a $2$-category.
Representable objects of  $\duof\textrm{-Cat}\downarrow\ps\duof$ are closed under the monoidal structure $\overline{\circ}$ which motivates restricting to  $\duof\textrm{-Cat}\downarrow\ps\rep\duof$.
Since representable functors are ``tractable'' and the functor $\textrm{end} \, : \duof\textrm{-Cat}\downarrow\ps\rep\duof\longrightarrow \textrm{Mon }\duof$ is strong monoidal we have the biadjunction
$$\xymatrix{(\textrm{Bimon }\duof_{h})\op \ar@<-1.5ex>[rr]_-{\textrm{mod}}="1" && \ar@<-1.5ex>[ll]_-{\textrm{end}}="2" \ar@{}"1";"2"|-{\perp} \textrm{Mon}\lowps(\duof\textrm{-Cat}\downarrow\ps\rep\duof )}$$
giving the correspondence between bimonoid structures on $M$ and isomorphism classes of monoidal structures on $\duof^{\ast M}$ such that the underlying functor is strong monoidal into the vertical structure on $\duof$.
The non-duoidal version of this result is attributed to Bodo Pareigis (see [\ref{Bodo1}], [\ref{Bodo2}] and [\ref{Bodo3}]).

The notion of a Hopf opmonoidal monad is found in the paper of Brugui\`eres-Lack-Virelizier [\ref{BLV}].
We adapt their work to the duoidal setting in order to lift closed structures on the monoidale (monoidal $\duof_{h}$-category) $(\duof , \hat{\circ} ,  \ulcorner\mathbf{1}\urcorner )$ to the $\duof_{h}$-category of right modules $\duof^{\ast M}$ for a bimonoid $M$.
In particular, Proposition \ref{closureprop} says that a monoidal $\duof_{h}$-category  $(\duof , \hat{\circ} ,  \ulcorner\mathbf{1}\urcorner )$ is closed if and only if $\duof_{v}$ is a closed monoidal $\ev$-category and there exists $\ev$-natural isomorphisms $X\circ (W\ast Y) \cong W\ast (X\circ Y) \cong (W\ast X)\circ Y$.
In light of $\duof$ being a duoidal $\ev$-category, Proposition \ref{closurepropmodified} gives a refinement of this result which taken together with Proposition \ref{closureprop} yields two isomorphims
$$X\ast (J\circ Y) \cong X\circ Y\cong Y\ast (X\circ J)$$
and
$$Y\circ (W\ast \mathbf{1} ) \cong W\ast Y \cong (W\ast \mathbf{1})\circ Y\, .$$
This result implies that in order to know $\circ$ we only need to know $\ast$ and $J\circ -$ or $-\circ J$.
Similarly to know $\ast$ we need only know $\circ$ and $\mathbf{1}\ast - $ or $-\ast\mathbf{1}$.
This extreme form of interpolation motivates the material of Section \ref{warpedmonstructsect}.

We would like a way to generate new duoidal categories.
One possible method presented here is the notion of a \emph{warped monoidal structure}.
In its simplest presentation, a warping for a monoidal category $\ba = (\ba , \otimes )$ is a purtabation of $\ba$'s tensor product by a ``suitable'' endo-functor $T:\ba\longrightarrow\ba$ such that  the new tensor product is defined by
$$A\warmon B = TA\otimes B\, .$$
We lift this definition to the level of a monoidale $A$ in a monoidal bicategory $\bm$.
Proposition \ref{warpmonoidale} observes that a warping for a monoidale determines another monoidale structure on $A$.
If $\duof$ is a duoidal $\ev$-category satisfying the right-hand side of the second isomorphism above then a vertical warping of $\duof$ by $T = -\ast\mathbf{1}$ recovers $\duof_{h}$.
This is precisely a warping of the monoidale $\duof_{v}$ in $\bm = \ev$-Cat.
The last example given generates a duoidal category by warping the monoidal structure of any lax braided monoidal category viewed as a duoidal category with $\ast = \circ = \otimes$ and $\gamma = 1\otimes c \otimes 1$.

\newpage

\section{The monoidality of hom}\label{monoidalityofhomsection}

Let $(\ev ,\otimes)$ be a symmetric closed complete and cocomplete monoidal category.
Recall from [\ref{KellyBook}] that a $\ev$-natural transformation $\theta$ between $\ev$-functors $T,S: \ba\longrightarrow\bx$ consists of a \emph{$\ev$-natural family} 
$$\xymatrix{\theta_{A} : TA \ar[r] & SA,\, A\in\ba} ,$$
such that the diagram
$$\xymatrix{
\ba (A, B) \ar[rr]^-{T} \ar[d]_-{S} && \bx ( TA, TB) \ar[d]^-{\bx (1, \theta_{B})} \\
\bx (SA, SB) \ar[rr]_-{\bx (\theta_{A}, 1)} && \bx (TA, SB)
}$$
commutes in the base category $\ev$.

If $(\bc , \boxtimes)$ is a monoidal $\ev$-category with tensor product $\boxtimes $ then the associativity isomorphisms $a_{A.B,C} : (A \boxtimes B) \boxtimes C \longrightarrow A \boxtimes (B \boxtimes C)$ are necessarily a $\ev$-natural family, which amounts to the commutativity of the diagram
$$\xymatrix{
(\bc (A,A')\otimes \bc (B,B'))\otimes \bc (C,C') \ar[d]_-{\cong} \ar[rr]^-{\boxtimes(\boxtimes\otimes 1)} \ar@{}[rrdd]|-{Nat_{a}} && \bc ((A \boxtimes B) \boxtimes C , (A'\boxtimes B' ) \boxtimes C') \ar[dd]^-{\bc(1,a_{A',B',C'})} \\
\bc (A,A')\otimes (\bc (B,B')\otimes \bc (C,C')) \ar[d]_-{\boxtimes(1\otimes \boxtimes)} && \\
\bc (A \boxtimes (B \boxtimes C) , A'\boxtimes (B' \boxtimes C')) \ar[rr]_-{\bc (a_{A,B,C}, 1)} && \bc ((A \boxtimes B) \boxtimes C , A'\boxtimes (B' \boxtimes C'))
}$$
Similarly the $\ev$-naturality of the unit isomorphisms
$$\xymatrix{\ell_{A} :I\boxtimes A\ar[r] & A\qquad \textrm{and}\qquad r_{A} : A\boxtimes I\ar[r] & A}$$
amounts to the commutativity of
$$\xymatrix{
\bc (A,A') \ar@{=}[d] \ar[r]^-{I\boxtimes -} \ar@{}[rd]|-{Nat_{\ell}} & \bc (I\boxtimes A, I\boxtimes A') \ar[d]^-{\bc(1, \ell_{A'})} && \bc (A,A') \ar[r]^-{-\boxtimes I}\ar@{=}[d] \ar@{}[rd]|-{Nat_{r}} & \bc(A\boxtimes I, A'\boxtimes I) \ar[d]^-{\bc(1, r_{A'})} \\
\bc(A, A') \ar[r]_-{\bc(\ell_{A} , 1)} & \bc (I\boxtimes A, A') && \bc (A, A') \ar[r]_-{\bc(r_{A} , 1)} & \bc ( A\boxtimes I, A')
}$$

\begin{proposition}\label{monoidalhomprop}
If $(\bc,\boxtimes )$ is a monoidal $\ev$-category then the $\ev$-functor 
$$\xymatrix{\bc (- , -): \bc\op\otimes\bc \ar[r] & \ev}$$
is equipped with a canonical monoidal structure.
\end{proposition}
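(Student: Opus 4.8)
The plan is to realise $\bc(-,-)$ as a monoidal $\ev$-functor from $\bc\op\otimes\bc$ to $\ev$ --- in the lax sense, since the constraint cells will not in general be invertible --- and to recognise that its three coherence axioms are exactly the $\ev$-naturality squares $Nat_a$, $Nat_\ell$ and $Nat_r$ displayed above. First I would fix the ambient monoidal structures. Because $\ev$ is symmetric, the formation of opposite $\ev$-categories commutes with $\otimes$, so $\boxtimes$ together with the (inverted) coherence constraints of $\bc$ makes $\bc\op$ a monoidal $\ev$-category with the same tensor on objects; hence $\bc\op\otimes\bc$ carries the pointwise monoidal structure $(A,A')\otimes(B,B')=(A\boxtimes B,\,A'\boxtimes B')$ with unit object $(I,I)$, while the target $\ev$ keeps its given structure $\otimes$.

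Next I would write down the constraint data. For the binary constraint I take the family
$$\phi_{(A,A'),(B,B')}\colon \bc(A,A')\otimes\bc(B,B')\longrightarrow\bc(A\boxtimes B,\,A'\boxtimes B'),$$
which is nothing but the effect on hom-objects of the $\ev$-functor $\boxtimes\colon\bc\otimes\bc\to\bc$, under the identification $(\bc\otimes\bc)\big((A,B),(A',B')\big)=\bc(A,A')\otimes\bc(B,B')$; for the unit constraint I take $\phi_0\colon I\to\bc(I,I)$ to be the identity element $j_I$ of the $\ev$-category $\bc$ at the object $I$. That $\phi$ is $\ev$-natural --- that is, a $\ev$-natural transformation between the $\ev$-functor $(\bc\op\otimes\bc)\otimes(\bc\op\otimes\bc)\to\ev$ obtained by applying $\bc(-,-)\otimes\bc(-,-)$ and then $\otimes\colon\ev\otimes\ev\to\ev$, and the one obtained by first tensoring in $\bc\op\otimes\bc$ and then applying $\bc(-,-)$ --- is a routine consequence of $\boxtimes$ being a $\ev$-functor (its compatibility with the composition morphisms of $\bc\otimes\bc$ and of $\bc$) together with the interchange law in $\ev$; $\ev$-naturality of $\phi_0$ is automatic.

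It then remains to verify the associativity axiom and the two unit axioms for $(\bc(-,-),\phi,\phi_0)$, and here the displayed diagrams do the work. The one point requiring care is the bookkeeping forced by the contravariant variable: applying $\bc(-,-)$ to the associativity constraint of $\bc\op\otimes\bc$ at $\big((A,A'),(B,B'),(C,C')\big)$ yields $\bc(1,a_{A',B',C'})\circ\bc(a_{A,B,C}^{-1},1)$, and likewise it sends the left and right unit constraints of $\bc\op\otimes\bc$ to $\bc(1,\ell_{A'})\circ\bc(\ell_A^{-1},1)$ and $\bc(1,r_{A'})\circ\bc(r_A^{-1},1)$. Substituting these into the three monoidal-functor axioms and cancelling the invertible factors $\bc(a_{A,B,C}^{-1},1)$, $\bc(\ell_A^{-1},1)$, $\bc(r_A^{-1},1)$ (and, in the unit cases, identifying $\phi_0$ followed by $\phi$ with the operators $I\boxtimes-$ and $-\boxtimes I$) turns the axioms into precisely the commuting cells $Nat_a$, $Nat_\ell$ and $Nat_r$, which hold by hypothesis since $a$, $\ell$, $r$ are $\ev$-natural. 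I expect the only real obstacle to be exactly this matching of variances --- correctly describing the action of $\bc(-,-)$ on a coherence isomorphism of $\bc\op\otimes\bc$ --- after which the lax monoidal axioms are literally the $\ev$-naturality diagrams already exhibited in this section; I would also note in passing that $\phi$ and $\phi_0$ fail to be invertible already for $\bc=\ev$ with $\boxtimes=\otimes$, so the structure produced is genuinely lax.
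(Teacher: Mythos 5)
Your proof is correct and follows essentially the same route as the paper: take the binary constraint to be the hom-effect of $\boxtimes$ and the unit constraint to be $j_I$, then observe that the monoidal-functor axioms are rearrangements of the $\ev$-naturality squares $Nat_{a}$, $Nat_{\ell}$, $Nat_{r}$. One quibble with your closing aside: for $\bc=\ev$ the unit constraint $j_I\colon I\to\bc(I,I)=[I,I]$ is in fact invertible (it corresponds to the identity under the canonical isomorphism $[I,-]\cong 1_{\ev}$), though the binary constraint is indeed generally non-invertible, so your point that the structure is genuinely lax stands.
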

\begin{proof}
For $\bc (-,-)$ to be monoidal we require the morphisms
$$\xymatrix{\boxtimes : \bc (W, X) \otimes \bc (Y, Z) \ar[r] & \bc (W\boxtimes Y , X \boxtimes Z)}$$
and
$$\xymatrix{j_{I} : I \ar[r] & \bc( I,I)}$$
to satisfy the axioms
$$\xymatrix{
(\bc (U,V)\otimes \bc (W,X))\otimes \bc (Y,Z) \ar[rr]^-{\boxtimes \otimes 1} \ar[d]_-{\cong} && \bc (U\boxtimes W , V\boxtimes X ) \otimes \bc (Y,Z) \ar[d]^-{\boxtimes} \\
\bc(U,V) \otimes  (\bc (W,X)\otimes \bc (Y, Z) ) \ar[d]_-{1\otimes\boxtimes} && \bc ((U\boxtimes W) \boxtimes Y, (V\boxtimes X) \boxtimes Z) \ar[d]^-{\bc (a_{U,W,Y}^{-1}, a_{V,X,Z})} \\
\bc (U,V) \otimes \bc (W\boxtimes Y, X\boxtimes Z) \ar[rr]_-{\boxtimes} && \bc (U\boxtimes (W\boxtimes Y) , V\boxtimes (X\boxtimes Z))
}$$
and
$$\scalebox{0.9}{\xymatrix{
\bc (I, I) \otimes \bc (Y, Z) \ar[r]^-{\boxtimes} & \bc (I \boxtimes Y , I \boxtimes Z) \ar[d]^-{\bc (\ell_{Y}^{-1},\ell_{Z} )}  & \bc (W,X) \otimes I \ar[r]^-{r} \ar[d]_-{1\otimes j_{I}} & \bc (W,X) \\
I\otimes\bc (Y, Z) \ar[r]_-{\ell} \ar[u]^-{j_{I} \otimes 1} & \bc (Y, Z) & \bc (W,X) \otimes \bc (I,I) \ar[r]_-{\boxtimes} & \bc (W \boxtimes I, X \boxtimes I)  \ar[u]_-{\bc (r_{W}^{-1},r_{X} )}
}}$$
These diagrams are simply reorganizations of the diagrams $Nat_{a}$, $Nat_{\ell}$, and $Nat_{r}$ above. \qed
\end{proof}

\begin{corollary}
If $C$ is a comonoid and $A$ is a monoid in the monoidal $\ev$-category $\bc$ then $\bc (C ,A)$ is canonically a monoid in $\ev$.
\end{corollary}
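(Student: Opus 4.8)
The plan is to deduce the corollary directly from Proposition \ref{monoidalhomprop} together with the standard fact that a monoidal $\ev$-functor carries monoids to monoids. First I would record the (routine) observation that the $\ev$-category tensor product $\bc\op\otimes\bc$ carries a pointwise monoidal structure, computed componentwise from the tensor $\boxtimes$ of $\bc$, for which a monoid is precisely a pair $(D,B)$ consisting of a monoid $D$ in $\bc\op$ and a monoid $B$ in $\bc$; and a monoid in $\bc\op$ is exactly a comonoid in $\bc$. Hence the given data assemble into a monoid $(C,A)$ in $\bc\op\otimes\bc$: the multiplication is $(\delta,\mu)\colon (C,A)\boxtimes(C,A)\longrightarrow(C,A)$ built from the comultiplication $\delta\colon C\longrightarrow C\boxtimes C$ and the multiplication $\mu\colon A\boxtimes A\longrightarrow A$, and the unit is $(\varepsilon,\eta)\colon (I,I)\longrightarrow(C,A)$ built from the counit $\varepsilon\colon C\longrightarrow I$ and the unit $\eta\colon I\longrightarrow A$; the monoid axioms for $(C,A)$ are the comonoid axioms for $C$ in the first coordinate and the monoid axioms for $A$ in the second.

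Next, since $\bc(-,-)\colon\bc\op\otimes\bc\longrightarrow\ev$ is monoidal by Proposition \ref{monoidalhomprop}, with structure maps $\boxtimes$ and $j_I$, I would apply it to the monoid $(C,A)$ to obtain a monoid in $\ev$ whose underlying object is $\bc(C,A)$. Concretely, the multiplication is the composite
$$\bc(C,A)\otimes\bc(C,A)\xrightarrow{\;\boxtimes\;}\bc(C\boxtimes C,\,A\boxtimes A)\xrightarrow{\;\bc(\delta,\mu)\;}\bc(C,A)$$
and the unit is
$$I\xrightarrow{\;j_I\;}\bc(I,I)\xrightarrow{\;\bc(\varepsilon,\eta)\;}\bc(C,A).$$
The associativity and unit laws for this structure follow by pasting the coherence diagrams of the monoidal $\ev$-functor $\bc(-,-)$ — which are exactly the reorganized diagrams $Nat_{a}$, $Nat_{\ell}$, $Nat_{r}$ appearing in the proof of Proposition \ref{monoidalhomprop} — against the comonoid axioms for $C$ and the monoid axioms for $A$. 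Since this monoid structure on $\bc(C,A)$ is produced by the canonical monoidal structure on $\bc(-,-)$, it is itself canonical.

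The only point meriting explicit verification is the first step: that the monoidal structure on $\bc\op\otimes\bc$ is the componentwise one and that passing to $\bc\op$ exchanges the monoid axioms for the comonoid axioms. This is straightforward bookkeeping with no real obstacle; once it is in place the corollary is immediate, being just the instance of ``monoidal functors preserve monoids'' applied to the monoid $(C,A)$ under $\bc(-,-)$.
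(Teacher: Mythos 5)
Your proposal is correct and follows exactly the paper's argument: the paper's proof is the one-line observation that $(C,A)$ is a monoid in $\bc\op\otimes\bc$ and that monoidal $\ev$-functors take monoids to monoids, which is precisely your strategy. Your additional spelling-out of the componentwise monoidal structure on $\bc\op\otimes\bc$ and of the resulting convolution-style multiplication and unit on $\bc(C,A)$ is accurate and fills in details the paper leaves implicit.
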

\begin{proof}
We observe that monoidal $\ev$-functors take monoids to monoids and $(C,A)$ is a monoid in $\bc\op\otimes\bc$.\qed
\end{proof}

\begin{proposition}
If $\bc$ is a braided monoidal $\ev$-category then
$$\xymatrix{\bc (-,-) : \bc\op\otimes\bc \ar[r] & \ev}$$
is a braided monoidal $\ev$-functor.
\end{proposition}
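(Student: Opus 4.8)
The plan is to build on Proposition~\ref{monoidalhomprop}. Since $\bc(-,-)$ already carries the canonical monoidal structure, with constraint $\boxtimes\colon\bc(W,X)\otimes\bc(Y,Z)\longrightarrow\bc(W\boxtimes Y,X\boxtimes Z)$ and unit $j_{I}\colon I\longrightarrow\bc(I,I)$, all that remains is to verify the single axiom relating this structure to the braidings. The domain $\bc\op\otimes\bc$ is braided: the braiding $c$ of $\bc$ induces the braiding $(c_{-,-})^{-1}$ on $\bc\op$, and a tensor product of braided $\ev$-categories is braided componentwise (this is where the symmetry of $\ev$ gets tacitly used); and $\ev$ itself is symmetric, with symmetry $\sigma$. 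So being a \emph{braided} monoidal $\ev$-functor means here just that, on top of the monoidal axioms, the square with corners $\bc(W,X)\otimes\bc(Y,Z)$, $\bc(W\boxtimes Y,X\boxtimes Z)$, $\bc(Y,Z)\otimes\bc(W,X)$ and $\bc(Y\boxtimes W,Z\boxtimes X)$ commutes, its clockwise leg being $\bc\big((c_{W,Y})^{-1},c_{X,Z}\big)\circ\boxtimes$ and its counterclockwise leg $\boxtimes\circ\,\sigma$. No extra condition on $j_{I}$ appears, since $\sigma_{I,I}=1$.

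First I would record, alongside the diagrams $Nat_{a}$, $Nat_{\ell}$, $Nat_{r}$ used above, the corresponding diagram $Nat_{c}$ expressing that the braiding isomorphisms $c_{A,B}\colon A\boxtimes B\longrightarrow B\boxtimes A$ form a $\ev$-natural family: concretely, $Nat_{c}$ is the commuting square with top-left corner $\bc(A,A')\otimes\bc(B,B')$ whose two legs are $\bc(1,c_{A',B'})\circ\boxtimes$ and $\bc(c_{A,B},1)\circ\boxtimes\circ\,\sigma$, both landing in $\bc(A\boxtimes B,B'\boxtimes A')$; this is exactly the defining square of $\ev$-naturality for the transformation $\boxtimes\Rightarrow\boxtimes\circ\tau$, with $\tau$ the swap. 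Then I would deduce the braided-functor square by substituting $A=W$, $A'=X$, $B=Y$, $B'=Z$ into $Nat_{c}$ and composing: $Nat_{c}$ gives $\bc(1,c_{X,Z})\circ\boxtimes=\bc(c_{W,Y},1)\circ\boxtimes\circ\,\sigma$, so writing the clockwise leg as $\bc\big((c_{W,Y})^{-1},c_{X,Z}\big)\circ\boxtimes=\bc((c_{W,Y})^{-1},1)\circ\bc(1,c_{X,Z})\circ\boxtimes$ and using $\bc((c_{W,Y})^{-1},1)\circ\bc(c_{W,Y},1)=\bc\big(c_{W,Y}\circ(c_{W,Y})^{-1},1\big)=1$ collapses it to $\boxtimes\circ\,\sigma$, which is the counterclockwise leg. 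Thus, exactly as in the proof of Proposition~\ref{monoidalhomprop}, the axiom is a mere reorganisation of a $\ev$-naturality square --- this time of $Nat_{c}$.

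The step I expect to be the main obstacle is getting the variance bookkeeping exactly right. The first coordinate of $\bc\op\otimes\bc$ lies in $\bc\op$, so its braiding component is $(c_{W,Y})^{-1}$, and the hom-$\ev$-functor converts this into a \emph{pre}-composition; one has to check that this pre-composition is precisely the leg of $Nat_{c}$ that cancels, and it is exactly this matching that forces the induced braiding on $\bc\op$ to be the inverse braiding rather than a naive transpose (the two agree only when $\bc$ is symmetric). This is also the natural place to see why one wants $\ev$ symmetric rather than merely braided: both the transposition of the two tensorands by $\sigma$ and the componentwise braiding of $\bc\op\otimes\bc$ rest on it. Finally, only one of the two braided-functor hexagons needs to be checked directly; the other follows formally, either from the first together with the hexagon identities for $\sigma$, or by rerunning the same argument with the $\ev$-naturality square for $c^{-1}$ in place of $c$.
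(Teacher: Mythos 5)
Your proposal is correct and takes essentially the same route as the paper: the paper's proof simply observes that the $\ev$-naturality square for the family $c_{X,Y}$ is (after the same rearrangement you carry out explicitly, absorbing $\bc(c_{W,Y},1)$ into $\bc(c_{W,Y}^{-1},c_{X,Z})$) exactly the braiding condition for the monoidal functor $\bc(-,-)$ of Proposition \ref{monoidalhomprop}. Your additional remarks on the inverse braiding for $\bc\op$ and the role of the symmetry of $\ev$ are accurate elaborations of points the paper leaves implicit.
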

\begin{proof}
Let $c_{X,Y} : X\boxtimes Y \longrightarrow Y\boxtimes X$ denote the braiding on $\bc$.
The requirement of $\ev$-naturality for this family of isomorphisms amounts precisely to the commutativity of
$$\xymatrix{
\bc (W,X) \otimes \bc (Y,Z) \ar[r]^-{\boxtimes} \ar[d]_-{\cong} & \bc (W\boxtimes Y, X\boxtimes Z) \ar[d]^-{\bc (c^{-1},c)} \\
\bc (Y,Z) \otimes \bc (W,X) \ar[r]_-{\boxtimes} & \bc (Y\boxtimes W , Z\boxtimes X)
}$$
which is exactly the braiding condition for the monoidal functor $\bc (-,-)$ of Proposition \ref{monoidalhomprop}. \qed
\end{proof}

We now give a spiritual successor to the above by moving to the level of monoidal bicategories.

\begin{proposition}\label{canonicalmonoidal}
If $\bm$ is a monoidal bicategory then the pseudofunctor 
$$\xymatrix{\bm (-,-) : \bm\op\times\bm \ar[r] & \mathrm{Cat}}$$
is equipped with a canonical monoidal structure.
\end{proposition}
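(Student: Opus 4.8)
The plan is to imitate the proof of Proposition \ref{monoidalhomprop} one categorical level up, with the reorganised $\ev$-naturality diagrams $Nat_a$, $Nat_\ell$, $Nat_r$ replaced by reorganised \emph{pseudonaturality} data for the associativity and unit equivalences of $\bm$. Write $\boxtimes$ and $I$ for the tensor product and unit of $\bm$. Then $\bm\op\times\bm$ is again a monoidal bicategory, with $(W,X)\boxtimes(Y,Z)=(W\boxtimes Y,X\boxtimes Z)$, unit $(I,I)$, and associativity equivalence with components $(a^{-1}_{U,W,Y},a_{V,X,Z})$ (passing to $\bm\op$ in the first factor reverses the associator), while $\mathrm{Cat}$ carries its cartesian monoidal structure $(\times,\mathbf{1})$. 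A \emph{monoidal structure} on the pseudofunctor $H=\bm(-,-)$ — necessarily in the lax sense, since, exactly as in Proposition \ref{monoidalhomprop}, the comparison maps will not be equivalences — then consists of constraint functors $\chi_{(W,X),(Y,Z)}$, a unit constraint $\iota$, invertible modifications $\omega$ (associativity), $\gamma$ and $\delta$ (left and right units), all subject to the monoidal-pseudofunctor coherence axioms.

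For the constraint functor I would take
$$\chi_{(W,X),(Y,Z)}\;=\;\boxtimes\;\colon\;\bm(W,X)\times\bm(Y,Z)\longrightarrow\bm(W\boxtimes Y,\,X\boxtimes Z)\,,$$
the tensor pseudofunctor of $\bm$ acting on hom-categories; its pseudonaturality in the pair of objects simply reflects the pseudofunctoriality of $\boxtimes$ (its composition and identity comparison $2$-cells). For $\iota\colon\mathbf{1}\to\bm(I,I)$ take the identity $1$-cell $1_I$. For the associativity modification: for each triple of $1$-cells $f,g,h$ of $\bm$, the pseudonaturality $2$-cell of the associativity equivalence $a$ gives an invertible $2$-cell comparing $a\circ\big((f\boxtimes g)\boxtimes h\big)$ with $\big(f\boxtimes(g\boxtimes h)\big)\circ a$; as $f,g,h$ range over the hom-categories these assemble into a natural isomorphism between the two functors
$$\bm(U,V)\times\bm(W,X)\times\bm(Y,Z)\longrightarrow\bm\big(U\boxtimes(W\boxtimes Y),\,V\boxtimes(X\boxtimes Z)\big)$$
got from the two bracketings of $\chi$ post-composed with $\bm(a^{-1},a)$. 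This natural isomorphism is $\omega$, the exact bicategorical counterpart of the reorganisation of $Nat_a$; the pseudonaturality $2$-cells of $\ell$ and $r$ supply $\gamma$ and $\delta$ in the same fashion, as counterparts of $Nat_\ell$ and $Nat_r$.

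It then remains to check the coherence axioms for $(H,\chi,\iota,\omega,\gamma,\delta)$. Unwound at tuples of objects, and using that $\mathrm{Cat}$ is cartesian (so its own coherence $2$-cells are identities), these axioms become equations that already hold in $\bm$: the axiom controlling $\omega$ reproduces the pentagonator coherence equation of $\bm$ together with the pseudonaturality of $a$, while the axioms involving $\gamma$ and $\delta$ reproduce the unit-coherence equations of $\bm$; the behaviour of $\omega,\gamma,\delta$ under composition and identities of $1$-cells is encoded in the respect-for-composition laws of the pseudonatural adjoint equivalences $a,\ell,r$, which hold by definition. The main obstacle I expect is bookkeeping rather than mathematics: fixing a workable presentation of ``monoidal bicategory'' and of ``monoidal pseudofunctor into $\mathrm{Cat}$'', keeping the variances straight — the first factor being $\bm\op$ forces the associator $\bm(a^{-1},a)$ and reverses the orientation of the pseudonaturality $2$-cells — and then pattern-matching each coherence axiom against the corresponding coherence cell of $\bm$. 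A slicker but less explicit route would be to invoke the general principle that a representable $\bm(-,-)$ out of a monoidal bicategory carries a canonical lax monoidal structure, this being the many-object bicategorical analogue of Proposition \ref{monoidalhomprop}; but producing the \emph{canonical} structure named in the statement is precisely the assignment of data above.
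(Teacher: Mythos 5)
Your proposal assigns the same canonical data as the paper ($\chi$ the tensor of $\bm$ acting on hom-categories, $\iota$ picking out $1_{I}$, and invertible modifications built from canonical $2$-cells), and it is correct in outline, but it handles coherence by a genuinely different route. The paper first invokes the coherence theorem of Gordon--Power--Street to replace $\bm$ by a Gray monoid, so that the associativity and unit $1$-cells $a,\ell,r$ are identities; the modifications $\omega,\xi,\kappa$ then have nothing to do with pseudonaturality of an associator and are instead the comparison isomorphisms $(u\otimes v)\otimes w\cong u\otimes(v\otimes w)$, $u\otimes 1_{I}\cong u$, $1_{I}\otimes u\cong u$ coming from the pseudofunctoriality of $\otimes:\bm\times\bm\longrightarrow\bm$, and the two monoidal-pseudofunctor axioms collapse to the coherence conditions for that single pseudofunctor. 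You instead work in a fully general (biased) monoidal bicategory and extract $\omega,\gamma,\delta$ from the pseudonaturality $2$-cells of the adjoint equivalences $a,\ell,r$, postcomposing with $\bm(a^{-1},a)$ etc.; this is legitimate, and it is the honest ``reorganisation of $Nat_{a}$, $Nat_{\ell}$, $Nat_{r}$'' one level up, but the price is exactly the bookkeeping you flag: $a^{-1}$ must really be the chosen adjoint pseudoinverse, the variance through $\bm\op$ must be tracked, and the axiom for $\omega$ becomes an equation of pastings involving the pentagonator of $\bm\op\times\bm$ and the modification axioms for $\pi$, none of which you carry out. Since the paper also leaves its (much shorter) axiom check to the reader, I would not call this a gap, but you should be aware that the strictification step is what makes the paper's verification essentially automatic, whereas your unstrictified version leaves a substantial coherence computation outstanding; if you want to keep your route, at least record that $\mathrm{Cat}$ is a strict monoidal $2$-category so all target-side constraints are identities, which is the one simplification both arguments share.
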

\begin{proof}
We avail ourselves of the coherence theorem of [\ref{GPS}] by assuming that $\bm$ is a Gray monoid (see [\ref{DayStreet}]).
The definition of a monoidal pseudofunctor (called a ``weak monoidal homomorphism'') between Gray monoids is defined on pages 102 and 104 of [\ref{DayStreet}].
Admittedly Cat is not a Gray monoid, but the adjustment to compensate for this is not too challenging.

In the notation of [\ref{DayStreet}], the pseudonatural transformation $\chi$ is defined at objects to be the functor
$$\xymatrix{\otimes : \bm (A,A') \times \bm (B,B') \ar[r] & \bm (A\otimes B, A'\otimes B')}$$
and at the morphisms to be the isomorphism
$$\xymatrix{
\bm (A,A') \times \bm (B,B') \ar@{}[rrd]|-{\cong} \ar[rr]^-{\otimes} \ar[d]_-{\bm (f,f')\times\bm (g,g')} && \bm (A\otimes B, A'\otimes B') \ar[d]^-{\bm (f,g)\times\bm (f',g')} \\
\bm (C,C') \times \bm (D,D') \ar[rr]_-{\otimes} && \bm (C\otimes D, C'\otimes D')  
}$$
whose component
$$(f'uf)\otimes (g'vg) \cong (f'\otimes g')(u\otimes v)(f\otimes g)$$
at $(u,v)\in\bm (A,A')\times\bm (B,B')$ is the canonical isomorphism associated with the pseudofunctor $\otimes : \bm\times\bm \longrightarrow\bm$ (see the top of page 102 of [\ref{DayStreet}]).
For $\iota$, we have the functor $1 \longrightarrow\bm (I,I)$ which picks out $1_{I}$.
For $\omega$, we have the natural isomorphism
$$\xymatrix{
\bm (A, A') \times \bm (B, B') \times \bm (C,C') \ar[rr]^-{\otimes\times 1}_-*!/d0.5pt/{\phantom{A}}="1" \ar[d]_-{1\times\otimes} && \bm (A\otimes B, A'\otimes B') \times \bm (C,C') \ar[d]^-{\otimes} \\
\bm (A,A') \times \bm (B\otimes C, B'\otimes C') \ar[rr]_-{\otimes}^-*!/u0.5pt/{\phantom{A}}="2" \ar@{=>}"1";"2"_*!/l3pt/{\labelstyle{\omega}} && \bm (A\otimes B\otimes C, A' \otimes B'\otimes C')
}$$
whose component at $(u,v,w)$ is the canonical isomorphism
$$(u\otimes v)\otimes w \cong u\otimes (v\otimes w)$$
associated with $\otimes :\bm\times\bm \longrightarrow\bm$.
For $\xi$ and $\kappa$, we have the natural isomorphisms
$$\xymatrix{
\ar@{}[rr]_-*!/d0.5pt/{\phantom{A}}="1" & \bm( A,A')\times \bm (I,I) \ar[rd]^-*!/r3pt/{\labelstyle{\otimes}} & \\
\bm (A,A') \ar[ru]^-*!/l3pt/{\labelstyle{1\times \ulcorner 1_{I} \urcorner} } \ar[rr]_-{1}^-*!/d0.5pt/{\phantom{A}}="2" \ar@{=>}"1";"2"_-*!/l3pt/{\labelstyle{\cong}}  && \bm (A,A') 
}$$
and
$$\xymatrix{
\ar@{}[rr]_-*!/d0.5pt/{\phantom{A}}="1" & \bm( I,I)\times \bm (A,A') \ar[rd]^-*!/r3pt/{\labelstyle{\otimes}}  & \\
 \bm (A,A')  \ar[ru]^-*!/l3pt/{\labelstyle{\ulcorner 1_{I} \urcorner \times1}} \ar[rr]_-{1}^-*!/d0.5pt/{\phantom{A}}="2" \ar@{=>}"1";"2"_-*!/l3pt/{\labelstyle{\cong}} && \bm (A,A')
}$$
with canonical components
$$u\otimes 1_{I} \cong u\qquad \textrm{and} \qquad 1_{I}\otimes u\cong u\, .$$
The two required axioms are then a consequence of the coherence conditions for pseudofunctors in the case of $\otimes : \bm\times\bm \longrightarrow\bm$. \qed
\end{proof}

\begin{corollary}
([\ref{DayStreet}]; page 110, Proposition 4) If $A$ is a pseudomonoid and $C$ is a pseudocomonoid in a monoidal bicategory $\bm$ then the category $\bm (C, A)$ is equipped with a canonical monoidal structure.
\end{corollary}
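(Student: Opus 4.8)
The plan is to reprise the argument behind the corollary to Proposition \ref{monoidalhomprop}: a monoidal pseudofunctor carries pseudomonoids to pseudomonoids, and the pair $(C,A)$ is a pseudomonoid in $\bm\op\times\bm$. First I would record the (standard) observation that a pseudocomonoid in $\bm$ is exactly a pseudomonoid in the monoidal bicategory $\bm\op$ obtained by reversing $1$-cells: the comultiplication $C\longrightarrow C\otimes C$ and counit $C\longrightarrow I$ become a multiplication $C\otimes C\longrightarrow C$ and a unit $I\longrightarrow C$, while the coassociativity and counit invertible $2$-cells (whose direction is unchanged, since $\bm\op$ does not reverse $2$-cells) become precisely the associativity and unit $2$-cells, and their coherence axioms translate into the pseudomonoid axioms. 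Since pseudomonoids in a product of monoidal bicategories are given componentwise, $(C,A)$ is thus a pseudomonoid in $\bm\op\times\bm$, with multiplication $(C\otimes C,A\otimes A)\longrightarrow(C,A)$, unit $(I,I)\longrightarrow(C,A)$, and the evident structural $2$-cells.

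Next I would invoke the fact that any monoidal pseudofunctor $F\colon\bk\longrightarrow\mathcal{L}$ between monoidal bicategories sends a pseudomonoid $P$ in $\bk$ to a pseudomonoid $FP$ in $\mathcal{L}$, the multiplication being $FP\otimes FP\xrightarrow{\,\chi\,}F(P\otimes P)\xrightarrow{\,Fm\,}FP$, the unit being $I\xrightarrow{\,\iota\,}FI\xrightarrow{\,Fe\,}FP$, and the required coherence $2$-cells being assembled from the structural cells $\omega,\xi,\kappa$ of $F$ together with $F$ applied to the coherence cells of $P$; this is the bicategorical counterpart of ``monoidal $\ev$-functors take monoids to monoids''. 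Applying it to $F=\bm(-,-)\colon\bm\op\times\bm\longrightarrow\mathrm{Cat}$, equipped with the canonical monoidal structure of Proposition \ref{canonicalmonoidal}, and to $P=(C,A)$, produces a pseudomonoid in $\mathrm{Cat}$ whose underlying object is $\bm(C,A)$. Since a pseudomonoid in the cartesian monoidal $2$-category $\mathrm{Cat}$ is just a monoidal category, this is the asserted structure; unwinding it, the tensor product is $\bm(C,A)\times\bm(C,A)\xrightarrow{\,\otimes\,}\bm(C\otimes C,A\otimes A)\xrightarrow{\,\bm(\delta,m)\,}\bm(C,A)$, sending $(u,v)$ to $m\,(u\otimes v)\,\delta$, and the unit object is $C\xrightarrow{\,e\,}I\xrightarrow{\,j\,}A$.

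I expect the only genuine work to be bookkeeping: writing out the coherence $2$-cells of the pseudomonoid $(C,A)$ and checking that $F$ applied to them, pasted with $\omega,\xi,\kappa$, satisfies the two pseudomonoid axioms. That this succeeds is forced by the two monoidal-pseudofunctor axioms verified in Proposition \ref{canonicalmonoidal}, just as in the $\ev$-enriched setting the monoid axioms for $\bc(C,A)$ followed from the reorganizations of $Nat_{a}$, $Nat_{\ell}$ and $Nat_{r}$; the bicategorical version merely replaces equalities of morphisms by coherent invertible $2$-cells. The one point meriting care is the variance: one must use $\bm\op$ with $1$-cells but not $2$-cells reversed, so that a pseudocomonoid really does become a pseudomonoid and $(C,A)$ really does live in the domain $\bm\op\times\bm$ of the hom-pseudofunctor.
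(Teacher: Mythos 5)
Your argument is correct and is exactly the one the paper intends: the corollary is stated without proof (citing Day--Street) immediately after Proposition \ref{canonicalmonoidal}, and is meant to follow just as the enriched corollary follows from Proposition \ref{monoidalhomprop}, namely because monoidal pseudofunctors take pseudomonoids to pseudomonoids and $(C,A)$ is a pseudomonoid in $\bm\op\times\bm$. Your care about the variance (reversing $1$-cells but not $2$-cells in $\bm\op$) and the explicit convolution formula $m\,(u\otimes v)\,\delta$ are both right.
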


\begin{proposition}\label{braidedmonpseufunc}
If $\bm$ is a braided monoidal bicategory then 
$$\xymatrix{\bm (-,-) : \bm\op\times\bm \ar[r] & Cat}$$
is a braided monoidal pseudofunctor.
\end{proposition}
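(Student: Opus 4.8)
The plan is to add to the monoidal pseudofunctor structure $(\chi,\iota,\omega,\xi,\kappa)$ on $\bm(-,-)$ produced in Proposition \ref{canonicalmonoidal} the one extra datum that promotes it to a braided monoidal pseudofunctor, and then to verify the two coherence axioms for that datum. As in that proof I would invoke the coherence theorem of [\ref{GPS}], work with a braided Gray monoid $\bm$, and dispose of the fact that $\mathrm{Cat}$ is not a Gray monoid by the same routine adjustment.

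A braided monoidal pseudofunctor is a monoidal pseudofunctor equipped with an invertible modification $u$ whose component at a pair of objects $a,b$ fills the square comparing ``$\chi_{a,b}$ then the target braiding'' with ``the source braiding then $\chi_{b,a}$'', subject to two axioms, one for each of the two hexagonators in play. For $\bm(-,-)\colon\bm\op\times\bm\to\mathrm{Cat}$ the target braiding is the strict symmetry of $\mathrm{Cat}$, and the source braiding on $\bm\op\times\bm$ is built from the braiding $c$ of $\bm$, reversed in the first (contravariant) variable by passing to a pseudo-inverse of $c$ --- exactly the device that, in the enriched statement above, produced the edge $\bc(c^{-1},c)$. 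The key step is to define $u$ from the pseudonaturality cells of $c$: as a pseudonatural transformation $\otimes\Rightarrow\otimes\circ\tau$, the braiding carries, for $1$-cells $f\colon A\to A'$ and $g\colon B\to B'$ of $\bm$, an invertible $2$-cell
$$c_{f,g}\colon (g\otimes f)\circ c_{A,B}\;\Longrightarrow\;c_{A',B'}\circ(f\otimes g)\,,$$
and these, combined with the counits of the adjoint equivalences $c\dashv c^{\bullet}$ used to reverse $c$, reassemble into the components of $u$ --- precisely as, in the enriched case treated immediately before, the $\ev$-naturality square of the braiding of $\bc$ \emph{is} the braiding condition for the monoidal $\ev$-functor $\bc(-,-)$. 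That the resulting family is a genuine modification, and not merely a family of $2$-cells, is exactly the pseudonaturality of $c$ (functoriality of $c_{f,g}$ in $(f,g)$ and its compatibility with $2$-cells of $\bm$), which is part of the braided structure.

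It then remains to check the two hexagon axioms for $u$. Since the symmetry of $\mathrm{Cat}$ has identity hexagonators, one side of each axiom collapses, and what is left is an equality of pasting composites assembled from $\omega$, $\chi$ and the cells $c_{f,g}$; this is forced by the braid axioms of $\bm$, which prescribe exactly how the two hexagonators, the associator, and the pseudonaturality of $c$ fit together, together with the coherence of [\ref{GPS}]. As with Proposition \ref{canonicalmonoidal}, once the data is correctly identified the axioms are a formal consequence of coherence. I expect the only real obstacle to be organisational: writing the component of $u$ with the contravariance of the first slot handled correctly, and matching the braided-pseudofunctor hexagon axioms against the braided-Gray-monoid axioms of $\bm$ term by term. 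There is no mathematical input beyond [\ref{GPS}] and the definition of braided monoidal bicategory; the work is of the same character as --- if more laborious than --- the ``not too challenging'' adjustment already flagged for the monoidal case.
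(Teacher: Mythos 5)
Your proposal is correct and follows essentially the same route as the paper: the paper likewise equips $\bm(-,-)$ with an invertible modification whose component at $(u,v)$ is pasted from the pseudonaturality cell $\rho_{u,v}$ of the braiding together with the counit triangle $\rho^{-1}\rho\cong 1$ used to reverse the braiding in the contravariant slot, citing Definition 14 of [\ref{DayStreet}] for the axioms. The paper is in fact terser than you are, stopping once the data is exhibited, whereas you also sketch the coherence check.
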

\begin{proof}
The required data of page 122, Definition 14 in [\ref{DayStreet}] is provided by the invertible modification
$$\xymatrix{
\bm (A,A')\times \bm ( B, B') \ar[rr]^-{\cong} \ar[d]_-{\otimes}^-{}="1" && \bm (B,B')\times \bm (A,A') \ar[d]^-{\otimes}_-{}="2" \ar@{}"1";"2"|-{\Longrightarrow}^-{\cong} \\
\bm (A\otimes B, A'\otimes B') \ar[rr]_-{\bm (\rho^{-1},\rho)} && \bm (B\otimes A, B'\otimes A')
}$$
whose component at $(u,v)$ is
$$\xymatrix{
B\otimes A  \ar[rr]^-{\rho}_-{\phantom{.}}="a" \ar@/^-3.5ex/[drr]_-{1} && A\otimes B \ar[d]_-{\rho}  \ar[rr]^-{u\otimes v}_-{\phantom{.}}="1"  && A'\otimes B' \ar[d]^-{\rho} \\
\ar@{}[rr]^-{\phantom{.}}="b" \ar@{}"a";"b"^<<<<<*!/r5pt/{\labelstyle{\cong}} && B\otimes A \ar[rr]_-{v\otimes u}^-{\phantom{.}}="2" \ar@{}"1";"2"|-{\cong}^-*!/r3pt/{\labelstyle{\rho_{u,v}}}  && B'\otimes A'
}$$ \qed
\end{proof}

What we really want is a presentation of these results lifted to the level of enriched monoidal bicategories.

Suppose $\bm$ is a monoidal bicategory.
Put $\ev = \bm (I, I)$, regarding it as a monoidal category under composition $\circ$.
There is another ``multiplication'' on $\ev$ defined by the composite 
$$\xymatrix{\bm (I,I) \times \bm (I,I) \ar[r]^-{\otimes} & \bm (I\otimes I,I\otimes I) \ar@{}[r]|-{\cong} & \bm (I,I)}$$
with the same unit $1_{I}$ as $\circ$.
By Proposition 5.3 of [\ref{JoyalStreet}], a braiding is obtained on $\ev$.

Furthermore, each hom category $\bm (X,Y)$ has an action 
$$\xymatrix{\bm (I,I) \times \bm (X,Y) \ar[r]^-{\otimes} & \bm (I\otimes X,I\otimes Y) \ar@{}[r]|-{\simeq} & \bm (X,Y)}$$ 
by $\ev$ which we abusively write as 
$$\xymatrix{(v , m) \ar@{|->}[r] & v \otimes m} .$$
We call $\bm$ \emph{left unit closed} when each functor 
$$\xymatrix{- \otimes m : \ev \ar[r] & \bm (X,Y)}$$
has a right adjoint 
$$\xymatrix{[m,-] : \bm (X,Y) \ar[r] & \ev} .$$
That is, we have a natural isomorphism
$$\bm (X,Y) (v\otimes m,n) \cong \ev (v,[m,n])\,.$$
In particular, this implies $\ev$ is a left closed monoidal category and that each hom category $\bm (X,Y)$ is $\ev$-enriched with $\ev$-valued hom defined by $[m,n]$.
Furthermore, since $\ev$ is braided, the $2$-category $\ev$-Cat of $\ev$-categories, $\ev$-functors and $\ev$-natural transformations is monoidal; see Remark 5.2 of [\ref{JoyalStreet}].

\begin{proposition}\label{liftingtovcat}
If the monoidal bicategory $\bm$ is left unit closed then the monoidal pseudofunctor of Proposition \ref{canonicalmonoidal} lifts to a monoidal pseudofunctor 
$$\xymatrix{\bm (-,-) : \bm\op\times\bm \ar[r] & \ev \textrm{-} \mathrm{Cat}}$$
where $\ev = \bm (I,I)$ as above.
\end{proposition}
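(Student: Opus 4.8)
\noindent\emph{Proof strategy.} The plan is to refine each piece of structure of the monoidal pseudofunctor of Proposition~\ref{canonicalmonoidal} along the forgetful $2$-functor $\ev\textrm{-}\mathrm{Cat}\to\mathrm{Cat}$. By hypothesis each category $\bm(X,Y)$ carries the $\ev$-enrichment with hom-object $[m,n]$, and evaluating the defining isomorphism $\bm(X,Y)(v\otimes m,n)\cong\ev(v,[m,n])$ at $v=1_I$ identifies its underlying ordinary category with $\bm(X,Y)$ itself. First I would check that each hom-functor $\bm(f,g)\colon\bm(A,A')\to\bm(C,C')$, sending $m$ to $gmf$, is an $\ev$-functor: its structure morphism $[m,n]\to[gmf,gnf]$ is the mate, under $-\otimes(gmf)\dashv[gmf,-]$, of the $2$-cell
$$[m,n]\otimes(gmf)\;\cong\;g\,\big([m,n]\otimes m\big)\,f\;\xrightarrow{\,g(\mathrm{ev})f\,}\;gnf,$$
where $\mathrm{ev}$ is the counit $[m,n]\otimes m\to n$ and the displayed isomorphism records the compatibility of the $\ev$-action with composition in $\bm$ (itself extracted from the pseudofunctoriality of $\otimes\colon\bm\times\bm\to\bm$ via the Gray-monoid interchangers). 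The associativity and unit axioms for this $\ev$-functor then follow from naturality of $\mathrm{ev}$ and the coherence of $\otimes$, while any ordinary natural transformation between such $\ev$-functors is automatically $\ev$-natural, being compatible with the closed $\ev$-actions; this yields the lift of the underlying pseudofunctor.

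For the monoidal structure, $\iota\colon 1\to\bm(I,I)$ is trivially an $\ev$-functor, so the substantive point is $\chi$: I must promote each component $\otimes\colon\bm(A,A')\times\bm(B,B')\to\bm(A\otimes B,A'\otimes B')$ to an $\ev$-functor on the tensor-product $\ev$-category $\bm(A,A')\otimes\bm(B,B')$, whose hom-object is $[m_1,m_2]\otimes[n_1,n_2]$. Once more by adjointness this is exactly the data of a coherent $2$-cell
$$\big([m_1,m_2]\otimes[n_1,n_2]\big)\otimes(m_1\otimes n_1)\;\longrightarrow\;m_2\otimes n_2$$
in $\bm(A\otimes B,A'\otimes B')$, which one assembles by reassociating in the Gray monoid, transposing the two hom-objects $[m_1,m_2]$ and $[n_1,n_2]$ through the braiding on $\ev$ supplied by Proposition~5.3 of [\ref{JoyalStreet}], and then applying the two evaluations $[m_1,m_2]\otimes m_1\to m_2$ and $[n_1,n_2]\otimes n_1\to n_2$. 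The modifications $\omega$, $\xi$, $\kappa$ of Proposition~\ref{canonicalmonoidal} — and the braiding modification of Proposition~\ref{braidedmonpseufunc} in the braided case — should then lift for free, since their components are invertible $2$-cells of $\bm$ built from the same canonical isomorphisms, so their $\ev$-naturality squares are again instances of the coherence of $\otimes$, and the two monoidal-pseudofunctor axioms for the lift are precisely those already established.

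I expect the main obstacle to be this last step: checking that the $2$-cell displayed above genuinely produces an $\ev$-functor — that its structure morphisms respect composition and identities and interact correctly, through the braiding of $\ev$, with the ``middle-four'' interchange built into the $\ev$-category tensor product — and then verifying its compatibility with $\omega$, $\xi$ and $\kappa$. This is lengthy but purely formal, and, just as for the $\mathrm{Cat}$-valued version in Proposition~\ref{canonicalmonoidal}, it reduces to the coherence theorem for $\bm$; I would present it by that reduction rather than by a direct diagram chase.
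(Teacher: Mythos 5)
Your proposal is correct and follows essentially the same route as the paper: reduce to a Gray monoid, build the enrichment of the hom-functors $\bm(f,g)$ and of the tensor components $\otimes\colon\bm(A,A')\otimes\bm(B,B')\to\bm(A\otimes B,A'\otimes B')$ out of the canonical interchange isomorphisms (using the braiding on $\ev$ for the latter), and observe that $\ev$-naturality of all remaining $2$-cells is automatic from coherence. The only cosmetic difference is that you package the enrichment data as mates under $-\otimes m\dashv[m,-]$, whereas the paper phrases it equivalently as lax action morphisms $v\otimes FA\to F(v\otimes A)$ for the tensored hom-categories.
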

\begin{proof}
We use the fact that, for tensored $\ev$-categories $\ba$ and $\bB$, enrichment of a functor $F:\ba \longrightarrow\bB$ to a $\ev$-functor can be expressed in terms of a lax action morphism structure
$$\xymatrix{\overline{\chi}_{V,A}:V\otimes FA \ar[r] & F(V\otimes A)}$$
for $V\in\ev$, $A\in\ba$.
Given such $\ev$-functors $F,G:\ba \longrightarrow\bB$, a family of morphisms 
$$\xymatrix{\theta_{A} : FA \ar[r] & GA}$$
is $\ev$-natural if and only if the diagrams 
$$\xymatrix{
V\otimes FA \ar[rr]^-{\overline{\chi}_{V,A}} \ar[d]_-{1\otimes\theta_{A}} && F(V\otimes A) \ar[d]^-{\theta_{V\otimes A}} \\
V\otimes GA \ar[rr]_-{\overline{\chi}_{V,A}} && G(V\otimes A)
}$$
commute.
Therefore, to see that the functors 
$$\xymatrix{\bm (f,g) : \bm (X,Y) \ar[r] & \bm (X',Y')} ,$$
for $f:X'\longrightarrow X$ and $g:Y \longrightarrow Y'$, are $\ev$-enriched, we require $2$-cells 
$$\xymatrix{v\otimes (g\circ m\circ f) \ar[r] & g\circ (v\otimes m) \circ f }$$
which constiture a lax action morphism.
As in the proof of Proposition \ref{canonicalmonoidal}, we assume that $\bm$ is a Gray monoid where we can take these $2$-cells to be the canonical isomorphisms.
It is then immediate that the $2$-cells $\sigma : f \Longrightarrow f'$ and $\tau : g \Longrightarrow g'$ induce $\ev$-natural transformations $\bm (\sigma , \tau ): \bm (f,g) \Longrightarrow \bm (f',g')$.

For the monoidal structure on $\bm (-,-)$, we need to see that the efect of the tensor of $\bm$ on homs defines a $\ev$-functor 
$$\xymatrix{\otimes : \bm (A,A') \otimes \bm (B,B') \ar[r] & \bm (A\otimes B, A'\otimes B')} .$$
Again we make use of the coherent isomorphisms; in this case they are
$$v\otimes (m\otimes n) \cong (v\circ m)\otimes (v\circ n)$$
for $v: I \longrightarrow I$, $m:A \longrightarrow A'$, $n: B \longrightarrow B'$.
It is clear that $\iota$ can be regarded as a $\ev$-functor $\iota : {\cal I} \longrightarrow \bm (I,I)$.
The $\ev$-naturality of all the $2$-cells involved in the monoidal structure on $\bm (-,-)$ now follows automatically from the naturality of the Gray monoid constraints. \qed
\end{proof}

\begin{proposition}\label{symmetriclift}
In the situation of Proposition \ref{liftingtovcat}, if $\bm$ is also symmetric then so is $\bm (-,-)$.
\end{proposition}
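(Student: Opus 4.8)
The plan is to combine the braiding modification built in the proof of Proposition \ref{braidedmonpseufunc} with the enrichment argument of Proposition \ref{liftingtovcat}, and then to verify that the one extra symmetry axiom is inherited from $\bm$. As in those proofs, we invoke the coherence theorem of [\ref{GPS}] to assume that $\bm$ is a Gray monoid, now braided (and symmetric) in the sense of [\ref{DayStreet}]. We note first that when $\bm$ is symmetric the monoidal category $\ev = \bm(I,I)$ is symmetric as well, so that $\ev\textrm{-}\mathrm{Cat}$ is a symmetric monoidal $2$-category and the statement is meaningful.

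Next I would check that the invertible modification of Proposition \ref{braidedmonpseufunc}, whose component at $(u,v)$ is the canonical isomorphism $\rho_{u,v}$, is $\ev$-natural, so that it lifts to a modification between the $\ev$-functors underlying $\bm(-,-)$ produced in Proposition \ref{liftingtovcat}. By the criterion recalled in the proof of that proposition, such a family $\theta$ is $\ev$-natural exactly when it is compatible with the lax action-morphism structures $\overline{\chi}_{V,-}$; but in the Gray monoid setting both these structure $2$-cells and $\rho_{u,v}$ itself are the canonical isomorphisms attached to $\otimes : \bm\times\bm\to\bm$ together with the braiding, so the relevant squares commute by naturality of the Gray monoid constraints, precisely as in the closing paragraph of the proof of Proposition \ref{liftingtovcat}. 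This already exhibits $\bm(-,-) : \bm\op\times\bm\to\ev\textrm{-}\mathrm{Cat}$ as a braided monoidal pseudofunctor.

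It then remains to check the symmetry axiom, that $\rho_{v,u}$ composed with $\rho_{u,v}$ is an identity $2$-cell. This is a pointwise assertion in $\mathrm{Cat}$, and Proposition \ref{braidedmonpseufunc} together with the symmetry of $\bm$ already supplies it there; passing to the $\ev$-enriched setting imposes no new condition. If one instead records ``symmetric'' by means of a syllepsis, the syllepsis modification is likewise assembled from canonical isomorphisms of $\otimes$, so the same reasoning shows it lifts, is $\ev$-natural, and satisfies its coherence axioms because the corresponding statements hold in $\mathrm{Cat}$.

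The main obstacle is organizational rather than mathematical: one must unwind the definition of a symmetric monoidal pseudofunctor between enriched monoidal bicategories and match each datum and each axiom against what Propositions \ref{braidedmonpseufunc} and \ref{liftingtovcat} deliver. Once one adopts the principle that every structure $2$-cell in sight is a canonical $\otimes$-isomorphism, each verification collapses to naturality of the Gray monoid constraints and to an already-proved statement at the level of $\mathrm{Cat}$, so no genuinely new calculation is required.
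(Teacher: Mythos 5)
Your proposal is correct and follows essentially the same route as the paper: the paper's own proof simply observes that $\ev = \bm(I,I)$ and hence $\ev$-Cat are symmetric, and that the braiding data of Proposition \ref{braidedmonpseufunc} lifts by the same Gray-monoid/canonical-isomorphism techniques used in Proposition \ref{liftingtovcat}. Your write-up just spells out in more detail the verifications the paper leaves implicit.
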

\begin{proof}
If $\bm$ is symmetric, so too is $\ev = \bm (I,I)$.
Consequently, $\ev$-Cat is also symmetric.
Referring to the proof of Proposition \ref{braidedmonpseufunc}, we see that the techniques of the proof of Proposition \ref{liftingtovcat} apply. \qed
\end{proof}

\begin{example}
Let $\ev$ be any braided monoidal category which is closed complete and cocomplete.
Put $\bm = \ev\textrm{-Mod}$, the bicategory of $\ev$-categories, $\ev$-modules (i.e. $\ev$-distributors or equivalently $\ev$-profunctors), and $\ev$-module morphisms.
This $\bm$ is a well-known example of a monoidal bicategory \ref{DayStreet}.
We can easily identify $\ev$ with $\ev$-Mod$({\cal I},{\cal I})$ and the action on $\bm (\ba , \bx)$ with the functor
$$\xymatrix{\ev\times\ev\textrm{-Mod}(\ba ,\bx ) \ar[r] & \ev\textrm{-Mod} (\ba ,\bx )}$$
given by the mapping
$$\xymatrix{(V,M) \ar@{|->}[r] & V\otimes M}$$
defined by $(V\otimes M )(X,A) = V\otimes M(X,A)$ with left module action
$$\xymatrix{\ba (A,B)\otimes V\otimes M(X,A) \ar[r]^-{c\otimes 1}_-{\cong} & V\otimes \ba (A,B) \otimes M(X,A) \ar[r]^-{1\otimes act_{\ell}} & V\otimes M(X,B)}$$
and right module action
$$\xymatrix{V\otimes M(X,A)\otimes \bx (Y,X) \ar[r]^-{1\otimes_{r}} & V\otimes M(Y,A)\, ,}$$
where $c$ is the braiding of $\ev$ and we have ignored associativity isomorphisms.
To see that $\bm = \ev\textrm{-Mod}$ is left unit closed we easily identify $[M,N]\in\ev$ for $M,N\in\ev\textrm{-Mod}(\ba ,\bx )$ with the usual $\ev$-valued hom for the $\ev$-category $[\bx\op\otimes\ba ,\ev]$; namely,
$$[M,N] = \int_{X,A} [M(X,A),N(X,A)]\, ,$$
the ``object of $\ev$-natural transformations''.
Therefore, in this case, Proposition \ref{liftingtovcat} is about the pseudofunctor
$$\xymatrix{\ev\textrm{-Mod}\op\times\ev\textrm{-Mod} \ar[r] & \ev\textrm{-Cat}} ,$$
given by the mapping
$$\xymatrix{(\ba , \bx ) \ar@{|->}[r] & [\bx\op\otimes\ba ,\ev ]} ,$$
asserting monoidality.
When $\ev$ is symmetric, Proposition \ref{symmetriclift} assures us the pseudofunctor is also symmetric.
\end{example}
\begin{remark}
There is presumably a more general setting encompassing the results of this section.
For a monoidal bicategory $\bk$, it is possible to define a notion of \emph{$\bk$-bicategory} $\bm$ by which we mean that the homs $\bm (X,Y)$ are objects of $\bk$.
For Proposition \ref{monoidalhomprop} we would take $\bk$ to be $\ev$ as a locally discrete bicategory and $\bm$ to be $\bc$.
For Proposition \ref{canonicalmonoidal}, $\bk$ would be Cat.
For Proposition \ref{liftingtovcat}, $\bk$ would be $\ev$-Cat.
Then, as in these cases, we would require $\bk$ to be braided in order to define the \emph{tensor product of $\bk$-bicategories} and so \emph{monoidal $\bk$-bicategories}.
With all this properly defined, we expect
$$\xymatrix{\bm (-,-) :\bm\op\otimes\bm \ar[r] & \bk}$$
to be a monoidal $\bk$-pseudofunctor.
\end{remark}

\section{Duoidal $\ev$-categories}

Throughout $\ev$ is a symmetric monoidal closed, complete and cocomplete category.
The following definition agrees with that of Batanin and Markl in [\ref{BatMarkl}] and, under the name $2$-monoidal category, Aguiar and Mahajan in [\ref{AgMaha}].

\begin{definition}\label{duocatdef}
A \emph{duoidal structure} on a $\ev$-category $\duof$ consists of two $\ev$-monoidal structures
\begin{eqnarray}
\xymatrix{\ast : \duof\otimes\duof \ar[r] & \duof , \quad \ulcorner J\urcorner:\mathbf{1} \ar[r] & \duof , \label{starfunc}} \\
\xymatrix{\circ :\duof\otimes\duof \ar[r] & \duof , \quad \ulcorner 1\urcorner:\mathbf{1} \ar[r] & \duof , \label{circfunc}}
\end{eqnarray}
such that either of the following equivalent conditions holds:
\begin{enumerate}
\item[(i)] the $\ev$-functors $\circ$ and $\ulcorner 1\urcorner$ of (\ref{circfunc}) and their coherence isomorphisms are monoidal with respect to the monoidal $\ev$-category $\duof_{h}$ of (\ref{starfunc}). \label{circview}
\item[(ii)] the $\ev$-functors $\ast$ and $\ulcorner J\urcorner$ of (\ref{starfunc}) and their coherence isomorphisms are opmonoidal with respect to the monoidal $\ev$-category $\duof_{v}$ of (\ref{circfunc}). \label{starview}
\end{enumerate}
We call the monoidal $\ev$-category $\duof_{h}$ of (\ref{starfunc}) \emph{horizontal} and the monoidal $\ev$-category $\duof_{v}$ of (\ref{circfunc}) \emph{vertical}; this terminology comes from an example of derivation schemes due to [\ref{BatMarkl}] (also see [\ref{StreetUCLlectures}]).

\noindent The extra elements of structure involved in (i) and (ii) are a $\ev$-natural middle-of-four interchange transformation
$$\xymatrix{\gamma: (A\circ B) \ast (C \circ D) \ar[r] & (A\ast C) \circ (B\ast D)},$$
and maps
$$\xymatrix{\mathbf{1}\ast\mathbf{1} \ar[r]^-{\mu} & \mathbf{1} & \ar[l]_-{\tau} J \ar[r]^-{\delta} & J \circ J}$$
such that the diagrams
\begin{eqnarray}\label{gammaaxiom1}
\scalebox{0.9}{\xymatrix{ 
((A\circ B) \ast (C \circ D)) \ast (E\circ F) \ar[d]_-{\gamma \ast 1} \ar[rr]^-{\cong} && (A\circ B) \ast ((C \circ D) \ast (E\circ F)) \ar[d]^-{1\ast\gamma}\\
((A\ast C) \circ (B \ast D)) \ast (E\circ F) \ar[d]_-{\gamma } && (A\circ B) \ast ((C \ast E) \circ (D\ast F)) \ar[d]^-{\gamma} \\
((A \ast C) \ast E ) \circ ((B \ast D)\ast F) \ar[rr]_-{\cong}&& (A \ast (C \ast E )) \circ (B \ast (D\ast F))
}}
\end{eqnarray}
\begin{eqnarray}\label{gammaaxiom2}
\scalebox{0.9}{\xymatrix{ 
((A\circ B) \circ C ) \ast ((D\circ E ) \circ F) \ar[rr]^-{\cong} \ar[d]_-{\gamma} && (A\circ (B \circ C) ) \ast (D\circ (E  \circ F)) \ar[d]^-{\gamma} \\
((A\circ B ) \ast (D \circ E ) ) \circ (C \ast F) \ar[d]_-{\gamma\circ 1} && ( A \ast D) \circ  ((B \circ C) \ast (E\circ F)) \ar[d]^-{1\circ \gamma} \\
((A\ast D ) \circ (B \ast E ) ) \circ (C \ast F) \ar[rr]_-{\cong} && ( A \ast D) \circ  ((B \ast E) \circ (C\ast F)) 
}}
\end{eqnarray}
and
\begin{eqnarray}\label{deltaaxiom}
\scalebox{0.9}{\xymatrix{ 
J\ast (A\circ B) \ar[r]^-{\delta \ast 1} & (J\circ J)\ast (A\circ B) \ar[d]^-{\gamma}  & (A\circ B) \ast J \ar[r]^-{1\ast\delta} & (A\circ B) \ast (J\circ J) \ar[d]^-{\gamma} \\
A\circ B \ar[u]^-{\cong} \ar[r]_-{\cong} & (J \ast A) \circ (J \ast B) & A\circ B \ar[u]^-{\cong} \ar[r]_-{\cong} & (A\ast J) \circ (B\ast J)
}}
\end{eqnarray}
\begin{eqnarray}\label{muaxiom}
\scalebox{0.9}{\xymatrix{ 
\mathbf{1}\circ (A\ast B)  & \ar[l]_-{\mu \circ 1} (\mathbf{1}\ast \mathbf{1})\circ (A\ast B)  & (A\ast B) \circ \mathbf{1}   & \ar[l]_-{1\circ \mu} (A\ast B)\circ (\mathbf{1} \ast \mathbf{1}) \\
A\ast B \ar[u]^-{\cong}  \ar[r]_-{\cong} & (\mathbf{1} \circ A) \ast (\mathbf{1}\circ B) \ar[u]^-{\gamma} & A\ast B  \ar[u]^-{\cong}  \ar[r]_-{\cong} & (A\circ \mathbf{1}) \ast (B\circ \mathbf{1}) \ar[u]^-{\gamma}
}}
\end{eqnarray}
commute, together with the requirement that $(\mathbf{1},\mu , \tau )$ is a monoid in $\duof_{h}$ and $(J,\delta , \tau )$ is a comonoid in $\duof_{v}$.
\end{definition}

\begin{example}
A braided monoidal category $\bc$ with braid isomorphism $c : A \otimes B \cong B\otimes A$ is an example of a duoidal category with $\otimes = \ast = \circ$ and $\gamma$, determined by $1_{A}\otimes c\otimes 1_{D}$ and re-bracketing, invertible.
\end{example}

\begin{example}
Let $\bc$ be a monoidal $\ev$-category. 
An important example is the $\ev$-category $\duof = [\bc\op\otimes\bc , \ev]$ of $\ev$-modules $\xymatrix{\bc \ar[r]|*=0-{\textrm{\scriptsize{|}}} & \bc}$ and $\ev$-module homomorphisms.
We see that $\duof$ becomes a duoidal $\ev$-category with $\ast$ the convolution tensor product for $\bc\op\otimes\bc$ and $\circ$ the tensor product ``over $\bc$''.
This example can be found in [\ref{StreetUCLlectures}].
\end{example}

\begin{definition}
A \emph{duoidal functor} $F: \duof \longrightarrow\duof '$ is a functor  $F$ that  is equipped with monoidal structures $\duof_{h} \longrightarrow\duof_{h}'$ and $\duof_{v} \longrightarrow\duof_{v}'$ which are compatible with the duoidal data $\gamma$, $\mu$, $\delta$, and $\tau$.
\end{definition}

\begin{definition}
A \emph{bimonoidal functor} $T: \duof \longrightarrow\duof '$ is a functor $F$ that is equipped with a monoidal structure $\duof_{h} \longrightarrow\duof_{h}'$ and an opmonoidal structure $\duof_{v} \longrightarrow\duof_{v}'$ both of which are compatible with the duoidal data $\gamma$, $\mu$, $\delta$, and $\tau$.
\end{definition}

\begin{definition}\label{bimonoidinf}
A \emph{bimonoid} $A$ in a duoidal category $\duof$ is a bimonoidal functor $\ulcorner A \urcorner : \mathbf{1} \longrightarrow \duof$.
That is, it is an object $A$ equipped with the structure of a monoid for $\ast$ and a comonoid for $\circ$, compatible via the axioms
\begin{eqnarray}
\xymatrix{A\ast A \ar[r]^-{\mu} \ar[d]_-{\delta \ast \delta} & A \ar[r]^-{\delta} \ar@{}[d]|-{=} & A\circ A \\ (A\circ A)\ast (A\circ A) \ar[rr]_-{\gamma} && (A\ast A)\circ (A\ast A) \ar[u]_-{\mu\circ\mu}}
\end{eqnarray}
\begin{eqnarray}
\xymatrix{A\ast A \ar[rr]^-{\mu} \ar[d]_-{\epsilon\ast\epsilon}^-{}="1" && A \ar[d]^-{\epsilon}_-{}="2" \ar@{}"1";"2"|-{=} \\  \mathbf{1}\ast \mathbf{1} \ar[rr]_-{\mu} && \mathbf{1}} & \xymatrix{ J\circ J \ar[d]_-{\eta\circ\eta}^-{}="1" && \ar[ll]_-{\delta} J \ar[d]^-{\eta}_-{}="2" \ar@{}"1";"2"|-{=} \\  A\circ A && \ar[ll]^-{\delta}  A}
\end{eqnarray}
\begin{eqnarray}
\xymatrix{J \ar[dd]_-{\tau} \ar[rd]^-{\eta} & \\ &\ar@{}[l]|-{=} A \ar[ld]^-{\epsilon} \\ \mathbf{1} &.} 
\end{eqnarray}
These are a lifting of the usual axioms for a bimonoid in a braided monoidal category.
\end{definition}

\section{Duoidales and produoidal $\ev$-categories}

Recall the two following definitions and immediately following example from [\ref{DayStreet}] where $\bm$ is a monoidal bicategory.

\begin{definition}
A \emph{pseudomonoid} $A$ in $\bm$ is an object $A$ of $\bm$ together with multiplication and unit morphisms $\mu : A\otimes A \longrightarrow A$, $\eta : I \longrightarrow A$, and invertible $2$-cells $a : \mu (\mu\otimes 1) \Longrightarrow \mu (1\otimes\mu)$, $\ell : \mu (\eta\otimes 1) \Longrightarrow 1$, and $r : \mu (1\otimes\eta)\Longrightarrow 1$ satisfying the coherence conditions given in [\ref{DayStreet}].
\end{definition}

\begin{definition}
A (lax-)morphism $f$ between pseudomonoids $A$ and $B$ in $\bm$ is a morphism $f: A \longrightarrow B$ equipped with
$$\xymatrix{M\otimes M \ar[r]^-{\mu} \ar[d]_-{f\otimes f}^-{}="1" & M \ar[d]^-{f}_-{}="2" \\ N\otimes N \ar[r]_-{\mu} \ar@{}"1";"2"|-{\Longrightarrow}^-{\varphi} & N}$$
and
$$\xymatrix{I \ar@/^2ex/[rrd]^-{\eta} \ar@/_5ex/[rrdd]_-{\eta} && \\  \ar@{}[rr]|{\Longrightarrow}^{\varphi_{0}} && M \ar[d]^-{f} \\  && N}$$
subject to three axioms.
\end{definition}

\begin{example}
If $\bm$ is the cartesian closed $2$-category of categories, functors, and natural transformations then a monoidal category is precisely a pseudomonoid in $\bm$.
\end{example}

This example motivates calling a pseudomonoid in a monoidal bicategory $\bm$ a \emph{monoidale} (short for a monoidal object of $\bm$).
A morphism $f: M\to N$ of monoidales is then a morphism of pseudomonoids (i.e. a monoidal morphism between monoidal objects).
We write Mon($\bm$) for the $2$-category of monoidales in $\bm$, monoidal morphisms, and monoidal $2$-cells. 
If $\bm$ is symmetric monoidal then so is  Mon($\bm$).

\begin{definition}
A \emph{duoidale} $F$ in $\bm$ is an object $F$ together with two monoidale structures
\begin{eqnarray}
\xymatrix{\ast : F \otimes F \ar[r] & F,\quad J : I \ar[r] & F}\\
\xymatrix{\circ : F \otimes F \ar[r] & F,\quad \mathbf{1} : I \ar[r] & F}
\end{eqnarray}
such that $\circ$ and $\mathbf{1}$ are monoidal morphisms with respect to $\ast$ and $J$.
\end{definition}

\begin{remark}
If $\bm$ = $\ev$-Cat then a duoidale in $\bm$ is precisely a duoidal $\ev$-category.
\end{remark}

Let $\bm$ = $\ev$-Mod be the symmetric monoidal bicategory of $\ev$-categories, $\ev$-modules, and $\ev$-module morphisms.
By Proposition \ref{symmetriclift}, there is a symmetric monoidal pseudofunctor
$$\xymatrix{\bm (\bid , -) : \bm \ar[r] & \ev\textrm{-Cat}}$$
defined by taking a $\ev$-category $\ba$ to the $\ev$-category $[\ba\op , \ev]$ of $\ev$-functors and $\ev$-natural transformations.

\begin{definition}
A \emph{produoidal $\ev$-category} is a duoidale in $\ev$-Mod.
\end{definition}

If $\duof$ is a produoidal $\ev$-category then there are $\ev$-modules 
\begin{eqnarray*}
\xymatrix{S : \duof \otimes \duof \ar[r] \ar@{}[r]|-{\scriptsize{|}} & \duof, \qquad H : {\cal I} \ar[r] \ar@{}[r]|-{\scriptsize{|}} & \duof} , \\
\xymatrix{R : \duof \otimes \duof \ar[r] \ar@{}[r]|-{\scriptsize{|}} & \duof, \qquad K : {\cal I}  \ar[r] \ar@{}[r]|-{\scriptsize{|}}& \duof} ,
\end{eqnarray*}
where $R$ and $K$ are monoidal with respect to $S$ so that there are $2$-cells $\gamma$, $\delta$, and $\tau$:
$$\xymatrix{
\duof\otimes\duof\otimes\duof\otimes\duof \ar@{}[r]|-{\simeq} \ar@{}[r]^-{1\otimes c \otimes 1} \ar[d] \ar@{}[d]|-{\rotatebox{90}{\scriptsize{|}}}_-{R\otimes R\,}^-{}="1" & \duof\otimes\duof\otimes\duof\otimes\duof \ar[r] \ar@{}[r]|-{\scriptsize{|}}^-*!/u1.5pt/{\labelstyle{S\otimes S}} &\duof\otimes\duof  \ar[d] \ar@{}[d]|-{\rotatebox{90}{\scriptsize{|}}}^-{\, R}_-{}="2" \ar@{}"1";"2"|-{\Longrightarrow}^-{\gamma} \\
\duof\otimes\duof  \ar[rr] \ar@{}[rr]|-{\scriptsize{|}}_-*!/d1.5pt/{\labelstyle{S}} && \duof
}$$
$$\xymatrix{
{\cal I} \ar@{}[r]|-{\simeq} \ar@/^-2ex/[rrd]|*=0-{\rotatebox{335}{\scriptsize{|}}}_-*!/d3pt/{\labelstyle{H}}^<<<<<<<{}="2" & {\cal I}\otimes {\cal I} \ar[r] \ar@{}[r]|-{\scriptsize{|}}^-*!/u3pt/{\labelstyle{H\otimes H}} & \duof\otimes\duof \ar[d] \ar@{}[d]|-{\rotatebox{90}{\scriptsize{|}}}^-{\, R}_-{}="1" &&  {\cal I} \otimes {\cal I} \ar[d] \ar@{}[d]|-{\rotatebox{90}{\scriptsize{|}}}_-{K\otimes K\,}^-{}="3" &\simeq& {\cal I} \ar[d] \ar@{}[d]|-{\rotatebox{90}{\scriptsize{|}}}^-{\, K}_-{}="4" \ar@{}"3";"4"|-{\Longrightarrow}^-{\mu} \\
& & \duof \ar@{}"1";"2"|<<<<<<<<<<{\Longrightarrow}_<<<<<<<<<<{\delta} && \duof\otimes\duof \ar[rr] \ar@{}[rr]|-{\scriptsize{|}}_-*!/d2pt/{\labelstyle{S}} && \duof
}$$
$$\xymatrix{
{\cal I} \ar@/^3ex/[rr] \ar@/^3ex/@{}[rr]|-{\scriptsize{|}}^-*!/u2pt/{\labelstyle{H}}_-{\phantom{.}}="1"  \ar@/_3ex/[rr] \ar@/_3ex/@{}[rr]|-{\scriptsize{|}}_-*!/d2pt/{\labelstyle{K}}^-{\phantom{.}}="2" \ar@{=>}"1";"2"^-{\tau}  && \duof
}$$
compatible with the two pseudomonoid structures.
By composition of $\ev$-modules these $2$-cells have component morphisms
$$\xymatrix{
\int^{X,Y}R(X;A,B)\otimes R(Y;C,D) \otimes S(E;X,Y) \ar[d]^{\gamma} \\ \int^{U,V}S(U;A,C)\otimes S(V;B,D)\otimes R(E;U,V)
}$$
$$\xymatrix{
H (A) \ar[rr]^-{\delta} && \int^{X,Y} H(X)\otimes H(Y)\otimes R(A;X,Y)
}$$
$$\xymatrix{
 \int^{X,Y} K(X)\otimes K(Y)\otimes S(A;X,Y) \ar[rr]^-{\mu} &&  K (A) 
}$$
$$\xymatrix{
H(A) \ar[rr]^-{\tau} && K(A)
}$$
in $\ev$.

Given any duoidal $\ev$-category $\duof$ we obtain a produoidal $\ev$-category structure on $\duof$ by setting
$$S(A;B,C) = \duof (A, B \ast C)$$
and
$$R(A;B,C) = \duof (A, B \circ C)$$
that is, we pre-compose the $\ev$-valued hom of $\duof$ with (\ref{starfunc}) and (\ref{circfunc}) of Definition \ref{duocatdef}.

\begin{proposition}
If $\duof$ is a produoidal $\ev$-category then $\bm ( \bid , \duof )$ = $[\duof\op , \ev]$ is a duoidal $\ev$-category.
\end{proposition}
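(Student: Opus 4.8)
The strategy is to recognise the statement as an instance of a general fact about monoidal pseudofunctors. Recall from the introductory discussion that a duoidal $\ev$-category is nothing but a pseudomonoid in the monoidal $2$-category $\textrm{Mon}(\ev\textrm{-Cat})$; symmetrically, unwinding the definition of a duoidale in $\bm=\ev\textrm{-Mod}$, a produoidal $\ev$-category is exactly a pseudomonoid in $\textrm{Mon}(\ev\textrm{-Mod})$ --- the $\ast$-monoidale $(S,H)$ being the underlying object of $\textrm{Mon}(\ev\textrm{-Mod})$, the $\circ$-monoidale $(R,K)$ being the pseudomonoid structure on it, and the requirement that $R$ and $K$ be monoidal morphisms for $(S,H)$ being precisely the condition that this pseudomonoid structure live in $\textrm{Mon}(\ev\textrm{-Mod})$. (Both $\textrm{Mon}$-categories are monoidal because $\ev\textrm{-Cat}$ and $\ev\textrm{-Mod}$ are symmetric monoidal, as noted in the excerpt.) Thus it suffices to produce a pseudofunctor $\textrm{Mon}(\ev\textrm{-Mod})\longrightarrow\textrm{Mon}(\ev\textrm{-Cat})$ that preserves pseudomonoids, and evaluate it at $\duof$.

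First I would apply $\textrm{Mon}(-)$ to the symmetric monoidal pseudofunctor $\bm(\bid,-)\colon\ev\textrm{-Mod}\longrightarrow\ev\textrm{-Cat}$ recorded above (from Proposition~\ref{symmetriclift}). A braided --- in particular symmetric --- monoidal pseudofunctor $\Phi\colon\bm\to\bm'$ carries monoidales to monoidales, monoidal morphisms to monoidal morphisms and monoidal $2$-cells to monoidal $2$-cells, and hence induces a pseudofunctor $\textrm{Mon}(\Phi)\colon\textrm{Mon}(\bm)\to\textrm{Mon}(\bm')$ which is again (symmetric) monoidal. Since monoidal pseudofunctors send pseudomonoids to pseudomonoids --- the principle underlying the Corollary to Proposition~\ref{canonicalmonoidal} --- the pseudofunctor $\textrm{Mon}(\bm(\bid,-))$ does what is required.

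Evaluating at $\duof$ then yields the claim, and it is worth recording what the resulting structure is. The $\ast$-monoidale $(S,H)$ on $\duof$ is sent by $\bm(\bid,-)$ to the convolution monoidal structure $(\ast,\ulcorner J\urcorner)$ on $[\duof\op,\ev]$ with $(F\ast G)(A)=\int^{X,Y}FX\otimes GY\otimes S(A;X,Y)$ and $J=H$; likewise $(R,K)$ gives $(\circ,\ulcorner\mathbf{1}\urcorner)$ with $(F\circ G)(A)=\int^{X,Y}FX\otimes GY\otimes R(A;X,Y)$ and $\mathbf{1}=K$. Because $R,K$ are monoidal morphisms for $(S,H)$, their images $\circ,\ulcorner\mathbf{1}\urcorner$ are monoidal $\ev$-functors for $(\ast,\ulcorner J\urcorner)$; the middle-of-four interchange $\gamma$ and the maps $\mu,\delta,\tau$ of Definition~\ref{duocatdef} are the values of $\bm(\bid,-)$ on the produoidal $2$-cells $\gamma,\mu,\delta,\tau$ displayed above, and the axioms (\ref{gammaaxiom1})--(\ref{muaxiom}) are the images of the corresponding monoidal-morphism axioms holding in $\ev\textrm{-Mod}$.

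The part I expect to require the most care is entirely coherential: justifying that $\textrm{Mon}(-)$ is functorial on braided monoidal pseudofunctors and that a duoidale in $\bm$ really is a pseudomonoid in $\textrm{Mon}(\bm)$ means tracking the constraint $2$-cells through these constructions, which --- following the pattern of the proofs of Propositions~\ref{canonicalmonoidal} and~\ref{liftingtovcat} --- is most comfortably done after reducing to a Gray-monoid presentation. Alternatively one can dispense with this machinery and verify axioms (\ref{gammaaxiom1})--(\ref{muaxiom}) directly on the coends above, but then the labour is again pure bookkeeping with the canonical coherence isomorphisms of $\ev\textrm{-Mod}$ rather than any real computation.
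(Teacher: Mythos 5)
Your argument is correct and lands on the same structure as the paper, but it takes a genuinely different (more abstract) route. The paper's proof is a direct construction: it writes down the two Day-convolution tensors $(M\ast N)(A)=\int^{X,Y}S(A;X,Y)\otimes M(X)\otimes N(Y)$ and $(M\circ N)(B)=\int^{U,V}R(B;U,V)\otimes M(U)\otimes N(V)$, lifts the produoidal $2$-cells $\gamma,\delta,\mu,\tau$ to $[\duof\op,\ev]$, and verifies the axioms (\ref{gammaaxiom1})--(\ref{muaxiom}) of Definition \ref{duocatdef} by iterated applications of the enriched Yoneda lemma and Fubini interchange for coends, as in [\ref{KellyBook}]. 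You instead identify a produoidal $\ev$-category with a pseudomonoid in $\mathrm{Mon}(\ev\textrm{-Mod})$ and a duoidal $\ev$-category with a pseudomonoid in $\mathrm{Mon}(\ev\textrm{-Cat})$, and then transport the former to the latter along the symmetric monoidal pseudofunctor $\bm(\bid,-)$ of Proposition \ref{symmetriclift}, using that such a pseudofunctor induces a monoidal pseudofunctor on categories of monoidales and hence preserves pseudomonoids. This is precisely the conceptual reading that the paper's surrounding text invites (it records the symmetric monoidality of $\bm(\bid,-)$ immediately before stating the proposition), and it buys a proof in which all coherence is delegated to one general principle applied once. The cost is that this principle --- that $\mathrm{Mon}(-)$ is functorial on braided monoidal pseudofunctors and that duoidales are pseudomonoids therein --- is itself a nontrivial coherence statement that neither the paper nor you actually proves; you rightly flag it as the delicate point, and reducing to a Gray-monoid presentation as in Propositions \ref{canonicalmonoidal} and \ref{liftingtovcat} is the sensible way to discharge it. The paper's hands-on Yoneda/Fubini verification avoids appealing to that machinery at the price of more bookkeeping; either route is acceptable, and your closing paragraph correctly identifies that the concrete structure produced is the same in both cases.
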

\begin{proof}
Consider the $\ev$-category of $\ev$-functors and $\ev$-natural transformations $[\duof\op ,\ev]$.
The two monoidale structures on $\duof$ translate to two monoidal structures on $[\duof\op ,\ev]$ by Day-convolution
\begin{eqnarray}
(M\ast N)(A) & = & \int^{X,Y} S(A;X,Y)\otimes M(X)\otimes N(Y) \\
(M\circ N)(B) & = & \int^{U,V} R(B;U,V)\otimes M(U)\otimes N(V) 
\end{eqnarray}
such that the duoidale $2$-cell structure morphisms lift to give a duoidal $\ev$-category.
More specifically the maps $(\gamma , \delta , \mu , \tau )$ lift to $[\duof\op , \ev]$ and satisfy the axioms (\ref{gammaaxiom1}), (\ref{gammaaxiom2}), (\ref{deltaaxiom}) and (\ref{muaxiom}) in Definition \ref{duocatdef}.
Demonstrating the lifting and commutativity of the requisite axioms uses iterated applications of the $\ev$-enriched Yoneda lemma and Fubini's interchange theorem as in [\ref{KellyBook}].\qed
\end{proof}

Our final theorem for this section permits us to apply the theory of categories enriched in a duoidal $\ev$-category $\duof$ even if the monoidal structures on $\duof$ are not closed.

\begin{theorem}
Let $\duof$ be a duoidal $\ev$-category. The Yoneda embedding $y: \duof \longrightarrow [\duof\op , \ev]$ gives $[\duof\op , \ev]$ as the duoidal cocompletion of $\duof$ with both monoidal structures closed.
\end{theorem}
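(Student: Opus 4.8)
The plan is to build on the preceding proposition, which --- applied to the produoidal structure induced on $\duof$ by setting $S(A;B,C)=\duof(A,B\ast C)$ and $R(A;B,C)=\duof(A,B\circ C)$ --- already exhibits $[\duof\op,\ev]$ as a duoidal $\ev$-category whose two monoidal structures are the corresponding Day convolutions, and then to add three things: that the Yoneda embedding $y$ is a duoidal $\ev$-functor, that both convolutions are closed, and that $y$ has the requisite universal property. Throughout I take the \emph{duoidal cocompletion of $\duof$} to mean its reflection along the forgetful $2$-functor from $\mathrm{Duo}_{c}(\ev)$, the $2$-category of duoidal $\ev$-categories that are $\ev$-cocomplete with both tensor products cocontinuous in each variable, together with cocontinuous duoidal $\ev$-functors, into the $2$-category $\mathrm{Duo}(\ev)$ of all duoidal $\ev$-categories and duoidal $\ev$-functors.

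First, $y$ is duoidal. Applying Day's convolution theorem to each of $(\duof,\ast,J)$ and $(\duof,\circ,\mathbf 1)$ separately makes $y$ \emph{strong} monoidal for both convolutions, the comparison isomorphisms $y(A\ast B)\cong yA\ast yB$, $yA\circ yB\cong y(A\circ B)$ and those on the units being the usual ones built from the $\ev$-Yoneda lemma and Fubini. Since the lifted interchange data $(\gamma,\mu,\delta,\tau)$ on $[\duof\op,\ev]$ were obtained by extending the data of $\duof$ along the defining coends, each compatibility square required of a duoidal functor collapses, after one application of Yoneda in every representable argument, to the corresponding square of Definition~\ref{duocatdef}; hence $y$ is a duoidal $\ev$-functor.

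Second, closedness. Since $\ev$ is complete and closed, Day convolution on a presheaf $\ev$-category is biclosed: the right adjoints of $-\ast N$ and of $M\ast-$ are
$$[N,L](A)=\int_{C}\bigl[\,NC,\; L(A\ast C)\,\bigr]\qquad\textrm{and}\qquad\{M,L\}(A)=\int_{B}\bigl[\,MB,\; L(B\ast A)\,\bigr],$$
with entirely analogous formulas for $\circ$; the adjunction isomorphisms follow from the defining universal properties of the coends in the convolution, of the ends above, and of the closed structure of $\ev$, together with the $\ev$-Yoneda lemma in the form $\int_{A}[\,\duof(A,Z),LA\,]\cong LZ$. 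Thus both monoidal structures on $[\duof\op,\ev]$ are closed; in particular each tensor, being a left adjoint in each variable, is cocontinuous in each variable, so $[\duof\op,\ev]$ lies in $\mathrm{Duo}_{c}(\ev)$.

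Third --- and this is the substance --- the universal property. Let $\mathcal D\in\mathrm{Duo}_{c}(\ev)$ and let $G:\duof\to\mathcal D$ be a duoidal $\ev$-functor. The ordinary free-cocompletion property of $y$ produces a cocontinuous $\ev$-functor $\bar G:=\mathrm{Lan}_{y}G:[\duof\op,\ev]\to\mathcal D$, unique up to isomorphism with $\bar G\,y\cong G$. Applying Day's theorem once more, to each tensor separately, shows $\bar G$ is strong monoidal for $\ast$ and for $\circ$ --- the familiar fact that the left Kan extension along $y$ of a strong monoidal functor into a cocomplete monoidal category with cocontinuous tensor is again strong monoidal. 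The hard part is to see that $\bar G$ is \emph{duoidal}, i.e.\ that its two monoidal comparisons are compatible with $\gamma,\mu,\delta,\tau$. I would argue this by density: every identity to be checked --- such as the hexagon relating the $\ast$- and $\circ$-comparisons of $\bar G$ through $\gamma$ --- is an equation between $\ev$-functors that are cocontinuous in each of their presheaf arguments, hence, by the density of $y$, determined by its restriction along $y$ in each argument; that restriction is precisely the analogous identity for $G$, which holds by hypothesis. Equivalently --- and more structurally --- one works in $\mathrm{Mon}(\ev\textrm{-Cat})$: the two Day convolutions make $[\duof\op,\ev]$ a pseudomonoid there, $y$ a morphism of pseudomonoids, and the whole construction the reflection into pseudomonoids, so that $\bar G$ is the induced morphism of pseudomonoids, which is exactly a duoidal $\ev$-functor. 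That $(\mathbf 1,\mu,\tau)$ is a monoid for the $\ast$-convolution and $(J,\delta,\tau)$ a comonoid for the $\circ$-convolution, and that $y$ and $\bar G$ preserve them, then comes for free, strong monoidal functors carrying monoids to monoids and comonoids to comonoids. Uniqueness of $\bar G$ as a cocontinuous duoidal functor, and the full $2$-categorical equivalence $\mathrm{Duo}_{c}(\ev)([\duof\op,\ev],\mathcal D)\simeq\mathrm{Duo}(\ev)(\duof,\mathcal D)$, follow from the corresponding uniqueness for the plain presheaf cocompletion; the only genuinely new point throughout is the bookkeeping of the middle-four interchange $\gamma$ under $\mathrm{Lan}_{y}$, which is where I expect the real work to lie.
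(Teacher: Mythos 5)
Your proposal is correct and follows essentially the same route as the paper: both rest on the Im--Kelly/Day convolution theorem applied separately to $\ast$ and $\circ$ to get the two biclosed monoidal structures on $[\duof\op,\ev]$ with $y$ strong monoidal, followed by the observation that the interchange data $(\gamma,\mu,\delta,\tau)$ lifts along the defining coends via Yoneda and Fubini. The only difference is one of explicitness: the paper compresses the universal property into the citation of Im--Kelly and the phrase ``lifts directly,'' whereas you spell out the intended $2$-categorical reflection and correctly identify the compatibility of the Kan-extended comparisons with $\gamma$ as the one point needing genuine verification.
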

\begin{proof}
This theorem is essentially an extension of some results of Im and Kelly in [\ref{ImKelly}] which themselves are largely extensions of results in [\ref{DayThesis}] and [\ref{KellyBook}].
In particular, if $\ba$ is a monoidal $\ev$-category then $\hat{\ba} = [\ba\op , \ev]$ is the free monoidal closed completion with the convolution monoidal structure.
If $\duof$ is a duoidal $\ev$-category then, by Proposition 4.1 of [\ref{ImKelly}], the monoidal structures $\ast$ and $\circ$ on $\duof$ give two monoidal biclosed structures on $\hat{\duof} = [\duof\op , \ev ]$ with the corresponding Yoneda embeddings strong monoidal functors.
As per [\ref{ImKelly}] the monoidal products are given by Day convolution 
\begin{eqnarray}
P\,\,\hat{\ast}\,\, Q & = & \int^{A,B} P(A)\otimes Q(B) \otimes \duof (-, A\ast B) \\
P\,\,\hat{\circ}\,\, Q & = & \int^{A,B} P(A)\otimes Q(B) \otimes \duof (-, A\circ B)
\end{eqnarray}
as the left Kan-extension of $y\otimes y$ along the composites $y\ast$ and $y\circ$ respectively.
Write $\hat{J}$ and $\hat{\mathbf{1}}$ for the tensor units $y(J) = \duof (-,J)$ and $y(\mathbf{1}) = \duof (-, \mathbf{1} )$  respectively.
The duoidal data $(\gamma , \mu , \delta ,\tau)$ lifts directly to give duoidal data $(\hat{\gamma} , \hat{\mu} , \hat{\delta}, \hat{\tau})$ for $\duof$. \qed
\end{proof}

\section{Enrichment in a duoidal $\ev$-category base}\label{enrichduoidsection}

Let $\duof$ be a duoidal $\ev$-category.
There is a $2$-category $\duof_{h}$-Cat of $\duof_{h}$-categories, $\duof_{h}$-functors, and $\duof_{h}$-natural transformations in the usual Eilenberg-Kelly sense; see [\ref{KellyBook}].
We write ${\cal J}$ for the one-object $\duof_{h}$-category whose hom is the horizontal unit $J$ in $\duof$.

Let $\ba$ and $\bB$ be $\duof_{h}$-categories and define $\ba\circ\bB$ to be the $\duof_{h}$-category with objects pairs $(A,B)$ and hom-objects $(\ba\circ\bB )((A,B),(A',B')) = \ba (A,A') \circ \bB (B,B')$ in $\duof_{h}$.
Composition is defined using the middle of four map $\gamma$ as follows
$$\xymatrix{ 
(\ba\circ\bB )((A',B'),(A'',B'')) \ast (\ba\circ\bB )((A,B),(A',B')) \ar[d]_-{\cong} \\
(\ba (A',A'')\circ \bB (B',B'')) \ast (\ba (A,A') \circ \bB (B,B')) \ar[d]_-{\gamma} \\
(\ba (A',A'')\ast \ba (A,A')) \circ (\bB (B',B'') \ast \bB (B,B')) \ar[d]_-{comp\,\circ\, comp} \\
\ba (A,A'') \circ \bB (B,B'') \ar[d]_-{\cong} \\
(\ba\circ\bB )((A,B),(A'',B''))\, .
}$$
Identities are given by the composition
$$\xymatrix{J \ar[r]^-{\delta} & J\circ J \ar[rr]^-{\hat{id}_{A}\circ \hat{id}_{B}} &&  \ba (A,A) \circ \bB (B,B) \, .}$$
The monoidal unit is the $\duof_{h}$-category $\overline{\mathbf{1}}$ consisting of a single object $\bullet$ and hom-object $\overline{\mathbf{1}}(\bullet , \bullet) = \mathbf{1}$.

Checking the required coherence conditions proves the following result of [\ref{BatMarkl}].
\begin{proposition}
The $\circ$ monoidal structure on $\duof_{h}$ lifts to a monoidal structure on the $2$-category $\duof_{h}$-Cat.
\end{proposition}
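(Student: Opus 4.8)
The plan is to verify that the data described just above the statement — the object-assignment $\ba \circ \bB$, the composition built from $\gamma$, the identities built from $\delta$, and the unit $\duof_h$-category $\overline{\mathbf{1}}$ — really do assemble into a monoidal $2$-category, and moreover that this monoidal structure is the lift of $(\duof_h,\circ,\ulcorner 1\urcorner)$ along the forgetful $2$-functor $\duof_h\textrm{-Cat}\longrightarrow\duof_h$-(one-object categories). I would organize the verification in four stages: first that $\ba\circ\bB$ is a well-defined $\duof_h$-category; second that $\circ$ is a $2$-functor; third that the associativity and unit constraints are $\duof_h$-functorial equivalences; and fourth that the coherence pentagon and triangle hold.

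First I would check that $\ba\circ\bB$ is a $\duof_h$-category: associativity of the composition displayed in the statement reduces, after cancelling the coherence isomorphisms of $(\duof,\ast,J)$, to axiom~(\ref{gammaaxiom2}) for $\gamma$ (the ``$\circ$-side'' hexagon), applied to the relevant hom-objects, together with the associativity of $comp$ in $\ba$ and in $\bB$ separately; the left and right unit laws reduce to axiom~(\ref{deltaaxiom}) together with the unit laws of $\ba$ and $\bB$ and the comonoid structure $(J,\delta,\tau)$. Here I would lean on the fact that $(J,\delta)$ being a comonoid in $\duof_v$ is exactly what makes the identity-on-$\ba\circ\bB$ map coassociative in the sense needed to interact with the $\gamma$'s. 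Next, $\circ$ as a $2$-functor: a pair of $\duof_h$-functors $F\colon\ba\to\ba'$, $G\colon\bB\to\bB'$ gives $F\circ G$ on objects by $(A,B)\mapsto(FA,GB)$ and on homs by $\ba(A,A')\circ\bB(B,B')\xrightarrow{F\circ G}\ba'(FA,FA')\circ\bB'(GB,GB')$; functoriality with respect to composition is again axiom~(\ref{gammaaxiom2}) combined with $\ev$-naturality of $\gamma$ (which is part of Definition~\ref{duocatdef}), and with the unit it is axiom~(\ref{deltaaxiom}); that $\duof_h$-natural transformations $\alpha,\beta$ induce a $\duof_h$-natural $\alpha\circ\beta$ uses $\ev$-naturality of $\gamma$ once more, so that the naturality square for $\alpha\circ\beta$ factors through those for $\alpha$ and $\beta$.

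For the constraints, the associativity equivalence $(\ba\circ\bB)\circ\bC\simeq\ba\circ(\bB\circ\bC)$ is the identity on objects and the coherence isomorphism $(\ba(A,A')\circ\bB(B,B'))\circ\bC(C,C')\cong\ba(A,A')\circ(\bB(B,B')\circ\bC(C,C'))$ of $\duof_v$ on homs; that this is a $\duof_h$-functor — i.e.\ compatible with the $\gamma$-built composition on both sides — is precisely axiom~(\ref{gammaaxiom2}), read as a commuting square relating the two ways of $\gamma$-ing three tensor factors. The unit equivalences $\overline{\mathbf{1}}\circ\ba\simeq\ba\simeq\ba\circ\overline{\mathbf{1}}$ use the left/right unit isomorphisms of $\duof_v$ on homs, and their $\duof_h$-functoriality uses axiom~(\ref{muaxiom}) together with $(\mathbf{1},\mu,\tau)$ being a monoid in $\duof_h$ (needed for the unit-object to carry the requisite identity). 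Finally the pentagon and triangle for the monoidal $2$-category reduce, on homs, to the pentagon and triangle of the monoidal $\ev$-category $\duof_v$, since all the constraint $2$-cells are ``identity on objects'' and given on homs by $\duof_v$-coherence isomorphisms — so there is genuinely nothing new to check at that last step beyond bookkeeping.

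The main obstacle, and where I would spend the real effort, is the interlocking of the two ``multiplicities'' of structure: every well-definedness check above is a diagram in $\duof$ that mixes $\ast$, $\circ$, $\gamma$, $\delta$, $\mu$, and the coherence isomorphisms of both monoidal structures, and the art is in recognizing each such diagram as an instance of one of~(\ref{gammaaxiom1})--(\ref{muaxiom}) (or of $\ev$-naturality of $\gamma$, or of the monoid/comonoid axioms for $\mathbf{1}$ and $J$) after stripping away associators and unitors. Since the statement is explicitly credited to Batanin--Markl~[\ref{BatMarkl}] as something whose proof is ``checking the required coherence conditions,'' I would not reproduce all of these pasting diagrams in full; I would instead indicate the correspondence ``associativity of $\circ$-composition $\leftrightarrow$ (\ref{gammaaxiom2}), unit laws $\leftrightarrow$ (\ref{deltaaxiom}), unit-object coherence $\leftrightarrow$ (\ref{muaxiom}) and the monoid axioms for $\mathbf{1}$, pentagon/triangle $\leftrightarrow$ $\duof_v$-coherence'' and leave the routine verifications to the reader, exactly in the spirit of the one-line proof the paper gives.
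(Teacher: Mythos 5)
Your proposal takes the same route as the paper, which simply asserts that ``checking the required coherence conditions'' does the job; your expansion of that check into the four stages (well-definedness of $\ba\circ\bB$, $2$-functoriality of $\circ$, the constraints, and coherence) is the intended argument and is essentially correct. One concrete misattribution would trip you up if you actually drew the diagrams: associativity of the composition in $\ba\circ\bB$ concerns three hom-objects, each a $\circ$ of two factors, being $\ast$-multiplied in two bracketings --- this is the shape of axiom (\ref{gammaaxiom1}), not (\ref{gammaaxiom2}); the latter (two factors, each a triple $\circ$, joined by one $\ast$) is the one you correctly use for $\duof_h$-functoriality of the associativity constraint $(\ba\circ\bB)\circ\bc\simeq\ba\circ(\bB\circ\bc)$. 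Likewise, functoriality of $F\circ G$ with respect to composition needs only $\ev$-naturality of $\gamma$ together with the functor axioms for $F$ and $G$; no instance of (\ref{gammaaxiom2}) enters there. With those two labels corrected the outline matches the verification the paper leaves to the reader.
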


We write $\duof$-Cat for the monoidal $2$-category $\duof_{h}$-Cat with $\circ$ as the tensor product.

Let $\duof$ be a duoidal $\ev$-category such that the horizontal monoidal structure $\ast$ is left-closed.
That is, we have
$$\duof (X\ast Y, Z) \cong \duof (X, [Y,Z])$$
with the ``evaluation'' counit $ev: [Y,Z]\ast Y \longrightarrow Z$.

This gives $\duof_{h}$ as an $\duof_{h}$-category in the usual way by defining the composition operation $[Y,Z]\ast [X,Y] \longrightarrow [X,Z]$ as corresponding to
$$\xymatrix{([Y,Z]\ast [X,Y]) \ast X \phantom{A} \cong \phantom{A} [Y,Z]\ast ([X,Y] \ast X) \ar[rr]^-{1\ast ev} && [Y,Z]\ast Y \ar[r]^-{ev} & Z}$$
and identities $\hat{id}_{X}: J \longrightarrow [X,X]$ as corresponding to $\ell : J\ast X\longrightarrow X$.

The duoidal structure of $\duof$ provides a way of defining $[X,X']\circ [Y,Y'] \longrightarrow [X\circ Y , X'\circ Y']$ using the the middle-of-four interchange map:
\begin{eqnarray}\xymatrix{([X,X']\circ [Y,Y'])\ast (X\circ Y) \ar[d]_-{\gamma} \ar[rr] && X' \circ Y' \\ ([X,X']\ast X) \circ ([Y,Y']\ast Y) \ar@/_/[rru]_-{ev\,\circ\, ev} &&}\label{duofhmulti}\end{eqnarray}

The above shows that $\duof$ is a monoidale (pseudo-monoid) in the category of $\duof_{h}$-categories with multiplication given by the $\duof_{h}$-functor $\hat{\circ} : \duof_{h}\circ\duof_{h} \longrightarrow\duof_{h}$ as defined.

Let Mon($\duof_{h}$) be the category of (horizontal) monoids $(M, \mu:M\ast M \longrightarrow M, \eta : J \longrightarrow M)$ in $\duof_{h}$.
Let $M$ and $N$ be objects of Mon($\duof_{h}$) and define the monoid multiplication map of $M\circ N$ to be the composition
$$\xymatrix{ (M\circ N) \ast (M\circ N) \ar[r]^-{\gamma} & (M\ast M) \circ (N\ast N) \ar[r]^-{\mu \circ \mu} & M\circ N}$$
and the unit to be
$$\xymatrix{J \ar[r]^-{\delta}& J\circ J \ar[rr]^-{\eta\circ\eta} && M\circ N}\, .$$

This tensor product of monoids is the restriction to one-object $\duof_{h}$-categories of the tensor of $\duof$-Cat.
So we have the following result which was also observed in [\ref{AgMaha}].

\begin{proposition}
The monoidal structure $\circ$ on $\duof$ lifts to a monoidal structure on the category Mon($\duof_{h}$).
\end{proposition}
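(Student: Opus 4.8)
The plan is to obtain the monoidal structure on $\mathrm{Mon}(\duof_{h})$ by restricting the monoidal $2$-category $\duof\textrm{-Cat}$ of the previous proposition to the full subcategory spanned by the one-object $\duof_{h}$-categories. The starting observation is the standard identification (see [\ref{KellyBook}]): a one-object $\duof_{h}$-category is precisely a monoid $(M,\mu,\eta)$ in $\duof_{h}$ --- its unique hom-object is the underlying object $M$, its composition is $\mu$, and its identity is $\eta$ --- and a $\duof_{h}$-functor between two one-object $\duof_{h}$-categories is exactly a homomorphism of the corresponding monoids. Thus $\mathrm{Mon}(\duof_{h})$ is identified with the full subcategory of the underlying $1$-category of $\duof\textrm{-Cat}$ spanned by the one-object $\duof_{h}$-categories, and it suffices to check that this full subcategory is closed under the tensor product $\circ$ and contains the unit object.

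First I would specialize the construction of $\ba\circ\bB$ from the previous section to the case where $\ba$ and $\bB$ each have a single object, with hom-objects $M$ and $N$ respectively. Then $\ba\circ\bB$ has a single object and hom-object $M\circ N$, and unwinding the displayed formulas for the composition and identities of $\ba\circ\bB$ in this one-object case, the composition is $\gamma$ followed by $\mu\circ\mu$ and the identity is $\delta$ followed by $\eta\circ\eta$ --- exactly the monoid structure on $M\circ N$ written above. Likewise the monoidal unit $\overline{\mathbf{1}}$ of $\duof\textrm{-Cat}$ is the one-object $\duof_{h}$-category with hom-object $\mathbf{1}$, composition $\mu$ and identity $\tau$, that is, the monoid $(\mathbf{1},\mu,\tau)$ whose existence is demanded by Definition \ref{duocatdef}. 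So the full subcategory is indeed closed under $\circ$ and contains the unit.

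Next I would note that the coherence isomorphisms restrict. The associativity and unit constraints of $\duof\textrm{-Cat}$ are invertible, identity-on-objects $\duof_{h}$-functors assembled pointwise from the associativity and unit isomorphisms of the monoidal $\ev$-category $(\duof_{h},\circ)$; being built from genuine isomorphisms they are themselves isomorphisms of $\duof_{h}$-categories, and on one-object $\duof_{h}$-categories they are therefore isomorphisms in $\mathrm{Mon}(\duof_{h})$. Their naturality, and the pentagon and triangle identities, are inherited verbatim from $\duof\textrm{-Cat}$; and functoriality of $\circ$ on morphisms restricts because the $\circ$-image of a pair of $\duof_{h}$-functors between one-object $\duof_{h}$-categories is again such a $\duof_{h}$-functor, hence a monoid homomorphism. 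Assembling these points produces the monoidal category $\mathrm{Mon}(\duof_{h})$.

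I expect no serious obstacle: the argument is formal, resting on the general principle that a full subcategory of a monoidal category which is closed under the tensor product and contains a unit object inherits a monoidal structure. The only step needing genuine care is the first one --- matching the one-object restriction of the $\duof\textrm{-Cat}$ tensor (composition via $comp\circ comp$, identities via $\hat{id}\circ\hat{id}$) with the stated monoid multiplication $\mu\circ\mu$ after $\gamma$ and unit $\eta\circ\eta$ after $\delta$ --- but this is pure bookkeeping, essentially already carried out in the displays preceding the statement.
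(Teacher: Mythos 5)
Your proposal is correct and matches the paper's own argument: the text immediately preceding the proposition states that the tensor product of monoids is the restriction to one-object $\duof_{h}$-categories of the tensor of $\duof$-Cat, which is exactly your identification. Your additional bookkeeping (matching $comp\circ comp$ after $\gamma$ with $\mu\circ\mu$ after $\gamma$, identifying the unit with $(\mathbf{1},\mu,\tau)$, and noting the coherence data restricts) simply makes explicit what the paper leaves implicit.
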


We write Mon $\duof$ for the monoidal category Mon($\duof_{h}$) with $\circ$.

\begin{remark}
A monoid in $(\textrm{Mon}\,\duof )\op$ is precisely a bimonoid in $\duof$.
\end{remark}

\section{The Tannaka adjunction revisited}\label{revisitetannakaathome}

Let $\duof$ be a horizontally left closed duoidal $\ev$-category.
Each object $M$ of $\duof$ determines an $\duof_{h}$-functor 
$$\xymatrix{-\ast M : \duof_{h} \ar[r] & \duof_{h}}$$
defined on objects by $A\mapsto A\ast M$ and on homs by taking
\begin{eqnarray}
\xymatrix{-\ast M: [A,B] \ar[r] & [A\ast M, B\ast M]}
\end{eqnarray}
to correspond to 
$$\xymatrix{[A,B]\ast (A\ast M) \phantom{A}\cong\phantom{A} ([A,B]\ast A)\ast M \ar[rr]^-{ev\ast 1} && B\ast M}.$$
If $M$ is a monoid in $\duof_{h}$ then $-\ast M$ becomes a monad in $\duof_{h}$-Cat in the usual way.

We write $\duof^{\ast M}$ for the Eilenberg-Moore $\duof_{h}$-category of algebras for the $\duof_{h}$-monad $-\ast M$; see [\ref{LintonSLN}] and [\ref{FTM}].
It is the $\duof_{h}$-category of right $M$-modules in $\duof$.
If $\duof$ has equalizers then $\duof^{\ast M}$ is assured to exist; the $\duof_{h}$-valued hom is the equalizer of the pair
\begin{eqnarray} \label{effectonhoms}
\xymatrix{
[A,B] \ar[rr]^-{[\alpha , 1]} \ar[rd]_-{-\ast M} && [A\ast M,B] \\
& [A\ast M, B\ast M] \ar[ru]_-{[1,\beta ]} &
}
\end{eqnarray}
where $\alpha : A\ast M\longrightarrow A$ and $\beta : B\ast M \longrightarrow B$ are the actions of $A$ and $B$ as objects of $\duof^{\ast M}$.

Let $U_{M} : \duof^{\ast M} \longrightarrow \duof_{h}$ denote the underlying $\duof_{h}$-functor which forgets the action and whose effect on homs is the equalizer of (\ref{effectonhoms}).
There is an $\duof_{h}$-natural transformation
\begin{eqnarray}
\xymatrix{\chi : U_{M}\ast M \ar[r] & U_{M}}
\end{eqnarray}
which is the universal action of the monad $-\ast M$; its component at $A$ in $\duof^{\ast M}$ is precisely the action $\alpha : A\ast M\longrightarrow A$ of $A$.

An aspect of the strong enriched Yoneda Lemma is the $\duof_{h}$-natural isomorphism
\begin{eqnarray}
\duof^{\ast M} (M,B) \cong U_{M} B .
\end{eqnarray}
In this special case, the result comes from the equalizer
$$\xymatrix{
B \ar[r]^-{\hat{\beta}} & [M,B] \ar@<-1ex>[rr]_-{[1,\beta ](-\ast M)} \ar@<+1ex>[rr]^-{[\mu ,1]} && [M\ast M , B].
}$$
In other words, the $\duof_{h}$-functor $U_{M}$ is representable with $M$ as the representing object.

Each $\duof_{h}$-functor $U:\ba\longrightarrow\duof_{h}$ defines a functor 
\begin{eqnarray}
\xymatrix{U\ast - :\duof \ar[r] & \duof_{h}\textrm{-Cat}(\ba , \duof_{h})}
\end{eqnarray}
taking $X\in\duof$ to the composite $\duof_{h}$-functor 
$$\xymatrix{\ba \ar[rr]^-{U} && \duof_{h} \ar[rr]^-{-\ast X} && \duof_{h}}$$
and $f:X\longrightarrow Y$ to the $\duof_{h}$-natural transformation $U\ast f$ with components
$$\xymatrix{1\ast f : UA\ast X \ar[r] & UA\ast Y.}$$
We shall call $U:\ba\longrightarrow\duof_{h}$ \emph{tractable} when the functor $U\ast -$ has a right adjoint denoted
\begin{eqnarray}\label{rightadj}
\xymatrix{\{ U , - \} : \duof_{h}\textrm{-Cat}(\ba , \duof_{h}) \ar[r] & \duof.}
\end{eqnarray}
This means that morphisms $t : X\longrightarrow \{ U, V \}$ are in natural bijection with $\duof_{h}$-natural transformations $\theta : U\ast X\longrightarrow V$.

Let us examine what $\duof_{h}$-naturality of $\theta : U\ast X\longrightarrow V$ means.
By definition it means commutativity of 
\begin{eqnarray}
\xymatrix{
& [VA,VB] \ar[rd]^-{\phantom{.}[\theta_{A},1]} & \\
\ba (A,B) \ar[ru]^-{V_{A,B}} \ar[d]_-{U_{A,B}} && [UA\ast X , VB] \phantom{\, .}\\
 [UA,UB] \ar[rr]_-{-\ast X} && [UA\ast X , UB \ast X]  \ar[u]_-{[1,\theta_{B}]}\, .
}
\end{eqnarray}
This is equivalent to the module-morphism condition
\begin{eqnarray}\label{modulemorphcond}
\xymatrix{
\ba (A,B)\ast UA\ast X \ar[rr]^-{1\ast \theta_{A}} \ar[d]_-{\overline{U}_{A,B}} && \ba (A,B)\ast VA \ar[d]^-{\overline{V}_{A,B}} \\
UB\ast X \ar[rr]_-{\theta_{B}} && VB
}
\end{eqnarray}
under left closedness of $\duof_{h}$.
Notice that tractability of an object $Z$ of $\duof$, regarded as an $\duof_{h}$-functor $\ulcorner Z\urcorner : {\cal J}\longrightarrow \duof_{h}$, is equivalent to the existence of a horizontal right hom $\{ Z,- \}$:
\begin{eqnarray}
\duof (X,\{ Z,Y \} ) \cong \duof (Z\ast X, Y).
\end{eqnarray}
Assuming all of the objects $UA$ and $\ba (A,B)$ in $\duof$ are tractable, we can rewrite (\ref{modulemorphcond}) in the equivalent form
\begin{eqnarray}
\xymatrix{
& \{ UA,VA \} \ar[rr]^-{\{ 1, \hat{V}_{AB}\} } && \{ UA, \{ \ba (A,B),VB \} \} \ar[dd]^-{\labelstyle{\cong}} \\
X \ar[ru]^-{\hat{\theta}_{A}} \ar[dr]_-{\hat{\theta}_B} &&& \\
& \{ UB,VB \} \ar[rr]^-{\{ \hat{U}_{AB} , 1 \} } && \{ \ba (A,B) \ast UA, VB \} .
}
\end{eqnarray}

\begin{proposition}
If $\duof$ is a complete, horizontally left and right closed, duoidal $\ev$-category and $\ba$ is a small $\duof_{h}$-category then every $\duof_{h}$-functor $U: \ba\longrightarrow\duof_{h}$ is tractable.
\end{proposition}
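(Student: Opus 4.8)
The plan is to construct the right adjoint $\{U,-\}$ of (\ref{rightadj}) explicitly as an end in $\duof$, and then to read off the adjunction from the reformulations of $\duof_{h}$-naturality already assembled in the discussion preceding the statement. The first step is to notice that, since $\duof$ is horizontally right closed, \emph{every} object $Z$ of $\duof$ --- regarded as the $\duof_{h}$-functor $\ulcorner Z\urcorner:{\cal J}\longrightarrow\duof_{h}$ --- is tractable, its associated right adjoint being the horizontal right hom $\{Z,-\}$; in particular each $UA$ and each hom-object $\ba(A,B)$ is tractable. Consequently the chain of equivalences given just above applies: under left closedness a $\duof_{h}$-natural transformation $\theta:U\ast X\longrightarrow V$ is the same thing as a family satisfying the module-morphism condition (\ref{modulemorphcond}), and, using tractability of the $UA$ and the $\ba(A,B)$, this is in turn the same as a family of morphisms $\hat{\theta}_{A}:X\longrightarrow\{UA,VA\}$, one for each object $A$ of $\ba$, making the last displayed diagram commute for every pair $A,B$.

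The second step is to package this datum as a single morphism into a limit. Since $\ba$ is small, the collections of objects and of pairs of objects are small; the objects $\{UA,VA\}$ and $\{\ba(A,B)\ast UA,VB\}$ exist by right closedness; and $\duof$ is complete. So one may form
$$\{U,V\}\ :=\ \mathrm{Eq}\Bigl(\ \prod_{A}\{UA,VA\}\ \rightrightarrows\ \prod_{A,B}\{\ba(A,B)\ast UA,VB\}\ \Bigr),$$
where the two maps are built from the effects of $V$ and of $U$ on hom-objects exactly as the two legs of the last displayed diagram, invoking the canonical isomorphism $\{UA,\{\ba(A,B),VB\}\}\cong\{\ba(A,B)\ast UA,VB\}$; this is precisely the end $\int_{A}\{UA,VA\}$. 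By construction a morphism $X\longrightarrow\{U,V\}$ in $\duof$ is exactly a family $(\hat{\theta}_{A})$ equalizing the displayed pair, hence exactly a $\duof_{h}$-natural transformation $U\ast X\longrightarrow V$; and since the representable $\duof(X,-)$ preserves limits, this bijection is natural in $X$. Finally the assignment $V\mapsto\{U,V\}$ is plainly functorial --- a $\duof_{h}$-natural $\phi:V\to V'$ induces compatible maps between the two parallel pairs by naturality of $\phi$ --- and the bijection is natural in $V$ as well, so $U\ast-\dashv\{U,-\}$, which is tractability of $U$. (One may note that the equalizer above is a $\ev$-weighted limit, so the adjunction is in fact $\ev$-enriched, although this is not required by the statement.)

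Thus the proof is short, because the substantive bookkeeping --- the equivalence of $\duof_{h}$-naturality with the equalizer condition --- has already been done; the only point deserving an explicit line is that a single enriched-naturality square is the \emph{sole} constraint on $\theta$ (there is no separate ``unit'' axiom for enriched natural transformations), so that the equalizer genuinely captures $\duof_{h}$-naturality on the nose. The one mild obstacle I anticipate is purely notational: because $\ast$ is not assumed braided, the two iterated right homs $\{UA,\{\ba(A,B),VB\}\}$ and $\{\ba(A,B),\{UA,VB\}\}$ are genuinely distinct, so one must be careful that $\ba(A,B)$ occupies the \emph{inner} slot, i.e.\ that the identification used is $\{\ba(A,B)\ast UA,VB\}\cong\{UA,\{\ba(A,B),VB\}\}$.
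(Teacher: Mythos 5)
Your proposal is correct and is exactly the argument the paper intends: the discussion preceding the statement rewrites $\duof_{h}$-naturality of $\theta: U\ast X\longrightarrow V$ as the cone condition over the diagram built from $\{UA,VA\}$ and $\{\ba(A,B)\ast UA,VB\}$ (using horizontal right closedness to make every object of $\duof$ tractable), and the paper leaves it to the reader to form the end $\int_{A}\{UA,VA\}$ as an equalizer of small products, which is precisely what you do. Your two points of care --- that the single naturality square is the only constraint, and that $\ba(A,B)$ must sit in the inner slot of the iterated right hom since $\ast$ is not braided --- are both well taken and consistent with the paper's displayed isomorphism $\{UA,\{\ba(A,B),VB\}\}\cong\{\ba(A,B)\ast UA,VB\}$.
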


However, some $U$ can still be tractable even when $\ba$ is not small.

\begin{proposition}\label{yonedaprop}
(Yoneda Lemma) If $U:\ba\longrightarrow\duof_{h}$ is an $\duof_{h}$-functor represented by an object $K$ of $\ba$ then $U$ is tractable and
$$\{ U ,V \} \cong VK .$$
\end{proposition}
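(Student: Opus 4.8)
The plan is to verify the universal property of a right adjoint directly. Recall that $U$ being tractable means the functor $U\ast - : \duof \longrightarrow \duof_h\textrm{-Cat}(\ba,\duof_h)$ has a right adjoint $\{U,-\}$, so I must exhibit, for each $\duof_h$-functor $V : \ba \longrightarrow \duof_h$, an object $\{U,V\}$ of $\duof$ together with a natural bijection between morphisms $t : X \longrightarrow \{U,V\}$ in $\duof$ and $\duof_h$-natural transformations $\theta : U\ast X \longrightarrow V$. I claim $\{U,V\} = VK$ does the job. First I would unwind the hypothesis that $U$ is represented by $K \in \ba$: this gives a $\duof_h$-natural isomorphism $\ba(K,-) \cong U$, or more precisely (by the strong enriched Yoneda lemma, exactly as in the special case $U_M \cong \duof^{\ast M}(M,-)$ recalled just above) evaluation at the identity $\hat{id}_K : J \longrightarrow UK$ induces for every $\duof_h$-functor $W : \ba \longrightarrow \duof_h$ a bijection between $\duof_h$-natural transformations $U \longrightarrow W$ and morphisms $J \longrightarrow WK$ in $\duof_h$, equivalently objects of $W K$.

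Next I would apply this with $W = (-\ast X)\circ V$, i.e. the composite $\duof_h$-functor $A \mapsto VA \ast X$: Yoneda gives that $\duof_h$-natural transformations $\theta : U\ast X \longrightarrow V$ (note $U\ast X$ is by definition the composite $\ba \xrightarrow{U} \duof_h \xrightarrow{-\ast X} \duof_h$, which is $\duof_h$-naturally iso to $A \mapsto \ba(K,A)\ast X$, but I only need the codomain form) — more carefully, I want transformations out of $U\ast X$, so I instead use that $U\ast X$ is represented... this needs a moment's care. The cleanest route: a $\duof_h$-natural $\theta : U\ast X \longrightarrow V$ is the same as a $\duof_h$-natural transformation $U \longrightarrow \{X,-\}\circ V$ would be if $\duof_h$ were closed in the relevant sense — but we do have horizontal left closedness, so $\theta_A : UA \ast X \longrightarrow VA$ transposes to $\bar\theta_A : UA \longrightarrow [X, VA]$, and one checks (using the naturality square for $\theta$ and the definition of the $\duof_h$-enrichment of $[X,-]$) that $\bar\theta$ is $\duof_h$-natural $U \longrightarrow [X,-]\circ V$. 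Now Yoneda for $U$ applied to $W = [X,-]\circ V$ yields a bijection with objects of $[X, VK] = \{X, VK\}$... and then tractability of objects of $\duof$ — wait, I should instead recognize $[X,VK]$-points as $X \longrightarrow VK$ via $\duof(J\ast X, VK)\cong\duof(X,VK)$ using $\duof(X,[Y,Z])\cong\duof(X\ast Y,Z)$ with $X = J$. Chaining: morphisms $X \longrightarrow VK$ correspond to $J \longrightarrow [X,VK]$, which correspond (Yoneda for $U$) to $\duof_h$-natural $U \longrightarrow [X,-]\circ V$, which correspond (transposition under left closedness) to $\duof_h$-natural $U\ast X \longrightarrow V$. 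This composite bijection is the desired one, and tracing through identities shows it is natural in $X$, establishing the adjunction with $\{U,V\} \cong VK$.

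The main obstacle I anticipate is the middle step: checking that transposition $\theta_A \leftrightarrow \bar\theta_A$ sends $\duof_h$-natural families to $\duof_h$-natural families, i.e. that the square \eqref{modulemorphcond} (the module-morphism form of $\duof_h$-naturality of $\theta$) transposes exactly to the $\duof_h$-naturality square for $\bar\theta : U \Longrightarrow [X,-]\circ V$. This is a diagram chase with the evaluation counit $ev : [X,VA]\ast X \longrightarrow VA$ and the definition of the composition/enrichment of the $\duof_h$-functor $[X,-]\circ V$ on homs; it is routine but is where all the closedness bookkeeping lives. Everything else is a formal consequence of the enriched Yoneda lemma (already invoked in this section) and the definition of tractability, so I would present those steps briskly and spend the bulk of the argument on this transposition compatibility.
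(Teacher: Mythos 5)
Your proposal is correct and follows essentially the same route as the paper's proof, which is exactly the chain $\duof_{h}\textrm{-Cat}(U\ast X,V)\cong\duof_{h}\textrm{-Cat}(U,[X,-]\circ V)\cong\duof (J,[X,VK])\cong\duof (X,VK)$ obtained by transposing under horizontal left closedness, applying the weak Yoneda lemma for the represented $U$, and then using $J\ast X\cong X$. The transposition-compatibility check you flag as the main obstacle is indeed the only real content; the paper delegates it silently to the weak Yoneda Lemma of Kelly's book.
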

\begin{proof}
By the ``weak Yoneda Lemma'' (see [\ref{KellyBook}]) we have
$$\duof_{h}\textrm{-Cat}(U\ast X,V)\cong\duof_{h}\textrm{-Cat}(U,[X,V])\cong\duof (J,[X,VK])\cong\duof (X,VK). \qed$$
\end{proof}

Consider the $2$-category $\duof_{h}\textrm{-Cat}\downarrow\ps\duof_{h}$ defined as follows.
The objects are $\duof_{h}$-functors  $U:\ba\longrightarrow\duof_{h}$.
The morphisms $(T,\tau ):U\longrightarrow V$ are triangles 
\begin{eqnarray}
\xymatrix{
\ba \ar[rr]^-{T} \ar[rd]_-{U}^{}="1" &&  \bB \ar[ld]^-{V}_{}="2" \ar@{}"1";"2"|-{\cong}^*!/d2pt/-{\labelstyle{\tau}} \\
& \duof_{h} &
}
\end{eqnarray}
in $\duof_{h}$-Cat.
The $2$-cells $\theta : (T,\tau )\Longrightarrow (S,\sigma )$ are $\duof_{h}$-natural transformations $\theta :T\Longrightarrow S$ such that
\begin{eqnarray}
\xymatrix{
\ba \ar[rr]_-{S}^-*!/d1pt/{\phantom{.}}="3" \ar@/^5ex/[rr]^-{T}_-*!/u1pt/{\phantom{.}}="4" \ar@{=>}"4";"3"^-{\phantom{.}\theta} \ar[rd]_-{U}^{}="1" &&  \bB \ar[ld]^-{V}_{}="2" \ar@{}"1";"2"|-{\cong}^*!/d1pt/-{\labelstyle{\sigma}} \\
& \duof_{h} &
} & \begin{array}{c} \\ \\ \\  = \end{array} & \xymatrix{
\ba \ar@/^3ex/[rr]^-{T} \ar[rd]_-{U}^<<<{}="1" &&  \bB \ar[ld]^-{V}_<<<{}="2" \ar@{}"1";"2"|-{\cong}^*!/d2pt/-{\labelstyle{\tau}} \\
& \duof_{h} &.
} 
\end{eqnarray}
We define a \emph{vertical tensor product} $\underline{\circ}$ on the $2$-category $\duof_{h}\textrm{-Cat}\downarrow\ps\duof_{h}$ making it a monoidal $2$-category, which we denote by $\duof\textrm{-Cat}\downarrow\ps\duof$.
For $\duof_{h}$-functors $U : \ba\longrightarrow\duof_{h}$ and $V: \bB\longrightarrow\duof_{h}$, define $U\underline{\circ} V : \ba\circ\bB\longrightarrow\duof_{h}$ to be the composite
\begin{eqnarray}
\xymatrix{\ba\circ\bB \ar[rr]^-{U\circ V} && \duof_{h}\circ\duof_{h} \ar[r]^-{\hat{\circ}} & \duof_{h} }.
\end{eqnarray}
The unit object is $\ulcorner \mathbf {1} \urcorner : \overline{\mathbf{1}} \longrightarrow \duof_{h}$.
The associativity constraints are explained by the diagram
\begin{eqnarray}
\xymatrix{
(\ba\circ\bB )\circ \bc \ar[rr]^-{\cong} \ar[d]_-{(U\circ V ) \circ W} && \ba\circ (\bB\circ \bc ) \ar[d]^-{U\circ (V \circ W ) } \\
(\duof_{h} \circ \duof_{h} ) \circ \duof_{h} \ar[rr]^-{\cong} \ar[d]_-{\hat{\circ}\, \circ \, 1}^>{}="1" && \duof_{h} \circ ( \duof_{h} \circ \duof_{h} ) \ar[d]^-{1\,\circ\,\hat{\circ}}_>{}="2" \ar@{}"1";"2"|-{\cong}^-*!/u2pt/{\labelstyle{a}} \\
\duof_{h} \circ \duof_{h} \ar[rd]_-{\hat{\circ}} && \duof_{h} \circ \duof_{h} \ar[ld]^-{\hat{\circ}} \\
& \duof_{h} &
}
\end{eqnarray}
where $a$ is the associativity constraint for the vertical structure on $\duof$.
The unit constraints are similar.

\begin{remark}
We would like to emphasise that, although there are conceivable $2$-cells for Mon $\duof$ as a sub-$2$-category of $\duof_{h}$-Cat (see [\ref{FTM}]), we are only regarding Mon $\duof$ as a monoidal category, not a monoidal $2$-category.
\end{remark}

Next we specify a monoidal functor 
\begin{eqnarray}
\xymatrix{ \textrm{mod} :  (\textrm{Mon }\duof )\op \ar[r] & \duof\textrm{-Cat}\downarrow\ps\duof}.
\end{eqnarray}
For each monoid $M$ in $\duof_{h}$, we put
$$\textrm{mod } M = ( U_{M} : \duof^{\ast M} \xymatrix{\ar[r]&} \duof_{h} ) .$$
For a monoid morphism $f: N \longrightarrow M$, we define
\begin{eqnarray}
\xymatrix{
\duof^{\ast M} \ar[rr]^-{\textrm{mod }f} \ar[dr]_-{U_{M}} & \ar@{}[d]|-{=} & \duof^{\ast N} \ar[dl]^-{U_{N}} \\
& \duof & 
}
\end{eqnarray}
by
$$(\textrm{mod }f) (A\ast M\xymatrix{\ar[r]^-{\alpha}&} A ) = (A\ast N\xymatrix{\ar[r]^-{1\ast f}&} A\ast M \xymatrix{\ar[r]^-{\alpha}&} A). $$
To see that mod $f$ is an $\duof_{h}$-functor, we recall the equalizer of (\ref{effectonhoms}) and point to the following diagram in which the empty regions commute.
$$\scalebox{0.9}{\xymatrix{
&& [A\ast M,B] \ar@/^3ex/[rrd]^-{[1\ast f , 1]}  && \\
[A,B] \ar@/^3ex/[rru]^-{[\alpha , 1]} \ar@{}[rru]_->>>>>>>>>>>>{(\ref{effectonhoms})\phantom{....}} \ar[rr]_-{-\ast M} \ar[dr]_-{-\ast N} && [A\ast M, B\ast M] \ar[u]_-{[1,\beta ]} \ar[rd]^-{[1\ast f,1]} && [A\ast N, B] \\
& [A\ast N, B\ast N] \ar[rr]_-{[1,1\ast f]} && [A\ast N, B\ast M] \ar[ru]_-{[1,\beta ]} &
}}$$
Alternatively, we could use the universal property of mod $N$ as the universal action of the monad $-\ast N$ on $\duof$.

For the monoidal structure on mod, we define an $\duof_{h}$-functor $\Phi_{M,N}$ making the square
\begin{eqnarray}\label{modismonoidal}
\xymatrix{
\duof^{\ast M}\circ\duof^{\ast N} \ar[rr]^-{\Phi_{M,N}} \ar[d]_-{U_{M}\circ U_{N}} && \duof^{\ast (M\circ N)} \ar[d]^-{U_{M\circ N}} \\
\duof_{h}\circ\duof_{h} \ar[rr]_-{\hat{\circ}} && \duof_{h}
}
\end{eqnarray}
commute; put
$$\begin{array}{lc}\Phi_{M,N} (A\ast M\xymatrix{\ar[r]^-{\alpha}&} A, B\ast N\xymatrix{\ar[r]^-{f}&} B) = \phantom{AAAAAAAAAAAAAAA}&\end{array} $$
$$\begin{array}{cr} \phantom{AAAAAAAA} & ((A\circ B)\ast (M\circ N) \xymatrix{\ar[r]^-{\gamma}&} (A\ast M )\circ (B\ast N) \xymatrix{\ar[r]^-{\alpha\circ\beta}&} A\circ B) \end{array}$$
and use the universal property of mod$(M\circ N)$ to define $\Phi_{M,N}$ on homs.
\newline

For tractable $U:\ba\longrightarrow\duof_{h}$, we have an evaluation $\duof_{h}$-natural transformation
$$ev : U\ast \{ U,V \} \xymatrix{\ar[r]&} V ,$$
corresponding under the adjunction (\ref{rightadj}), to the identity of $\{ U, V \}$.
We have a ``composition morphism''
$$\xymatrix{\mu: \{ U,V \} \ast \{ V, W\} \ar[r] & \{ U,W \} }$$
corresponding to the composite 
$$\xymatrix{ U \ast \{ U, V\} \ast \{ V, W \} \ar[rr]^-{ev\ast 1} && V\ast \{ V, W\} \ar[r]^-{ev} & W. }$$
In particular,
$$\xymatrix{\mu: \{ U,U \} \ast \{ U, U\} \ar[r] & \{ U,U \} }$$
together with 
$$\xymatrix{\eta : J \ar[r] & \{ U,U \}},$$
corresponding to $U\ast J\cong U$, gives $\{ U,U \}$ the structure of a monoid, denoted end $U$, in $\duof_{h}$.

\begin{proposition}
For each tractable $\duof_{h}$-functor $U:\ba\longrightarrow\duof_{h}$, there is an equivalence of categories
$$(\mathrm{Mon}\,\duof_{h})(M,\mathrm{end}\, U) \simeq (\duof_{h} \textrm{-}\mathrm{Cat}\downarrow\ps\duof_{h})(U, \mathrm{mod}\, M)$$
pseudonatural in monoids $M$ in $\duof_{h}$.
\end{proposition}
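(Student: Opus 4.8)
The plan is to identify both sides of the claimed equivalence with one and the same set, namely the set of $\duof_h$-natural actions of the monad $-\ast M$ on $U$, and then to read off the equivalence and its pseudonaturality from these two identifications.

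First I would unwind the right-hand $2$-categorical hom. An object of $(\duof_h\textrm{-Cat}\downarrow\ps\duof_h)(U,\textrm{mod}\,M)$ is a pair $(T,\tau)$ consisting of a $\duof_h$-functor $T:\ba\longrightarrow\duof^{\ast M}$ together with an isomorphism $\tau: U_M T\cong U$; pasting $T$ into the universal action $\chi$ and conjugating by $\tau$ yields a $\duof_h$-natural transformation $\theta_{T,\tau}:U\ast M\longrightarrow U$ which is an action of the monad $-\ast M$ on $U$. By the defining universal property of the Eilenberg--Moore $\duof_h$-category $\duof^{\ast M}$ (see [\ref{LintonSLN}], [\ref{FTM}]), this assignment is a bijection, modulo the evident isomorphisms, between objects $(T,\tau)$ and $\duof_h$-natural monad actions $\theta:U\ast M\longrightarrow U$; moreover, since a $\duof_h$-natural transformation $T\Longrightarrow S$ between such lifts is exactly a $\duof_h$-natural transformation $U_M T\Longrightarrow U_M S$ commuting with the actions, the compatibility condition built into a $2$-cell of the comma $2$-category forces its underlying transformation to be $\sigma^{-1}\tau$ and shows that such a $2$-cell exists precisely when $\theta_{T,\tau}=\theta_{S,\sigma}$, in which case it is unique. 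Hence the right-hand hom-category is a groupoid with at most one arrow between any two objects, so it is equivalent to the discrete category on the set of $\duof_h$-natural monad actions $U\ast M\longrightarrow U$.

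Next I would identify the (discrete) left-hand category with the same set. Tractability of $U$ and the adjunction (\ref{rightadj}) give a natural bijection between morphisms $\phi:M\longrightarrow\{U,U\}$ in $\duof$ and $\duof_h$-natural transformations $\theta:U\ast M\longrightarrow U$. Because the monoid structure on $\textrm{end}\,U=\{U,U\}$ was defined, just before the statement, through the evaluation $ev:U\ast\{U,U\}\longrightarrow U$ and the composition morphism, a diagram chase with $ev$ — using the translation of $\duof_h$-naturality into the module-morphism form (\ref{modulemorphcond}) — shows that $\phi$ is a monoid morphism exactly when the corresponding $\theta$ satisfies the unit and associativity axioms of a monad action. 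This verification, that the monoid-morphism axioms for $\phi$ coincide with the algebra axioms for $\theta$, is the one genuine computation in the argument and the main obstacle; everything else is an application of universal properties.

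Finally I would assemble the pieces. The composite bijection $\phi\mapsto\theta_\phi\mapsto(T_\phi,\tau_\phi)$, where $T_\phi$ is the lift of $U$ along $U_M$ determined by the action $\theta_\phi$, defines a functor $(\textrm{Mon}\,\duof_h)(M,\textrm{end}\,U)\longrightarrow(\duof_h\textrm{-Cat}\downarrow\ps\duof_h)(U,\textrm{mod}\,M)$; by the first two steps it is essentially surjective and fully faithful, hence an equivalence. For pseudonaturality in $M$: given a monoid morphism $g:M'\longrightarrow M$, one checks that restricting an action along $U\ast g$ corresponds on the left to precomposition $\phi\mapsto\phi g$ and on the right to composition with $\textrm{mod}\,g$ (whose effect on algebras is $\alpha\mapsto\alpha\,(1\ast g)$), so the square of functors induced by $g$ commutes up to the canonical isomorphism supplied by the comma-$2$-category structure; checking that these isomorphisms assemble into a modification is routine.
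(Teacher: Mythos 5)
Your proposal is correct and follows essentially the same route as the paper's proof: use the tractability adjunction to identify monoid morphisms $M\longrightarrow\mathrm{end}\,U$ with monad actions $U\ast M\longrightarrow U$, invoke the universal property of the Eilenberg--Moore construction to match these with liftings $(T,1)$, and then show every $(T,\tau)$ is isomorphic to such a strict lifting. Your extra observation that the right-hand hom-category is essentially discrete (at most one $2$-cell, determined by $\sigma^{-1}\tau$) is a useful elaboration the paper leaves implicit, but it does not change the argument.
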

\begin{proof}
Morphisms $t:M\longrightarrow\textrm{end }U$ in $\duof$ are in natural bijection (using (\ref{rightadj})) with $\duof_{h}$-natural transformations $\theta : U\ast M\longrightarrow U$.
It is easy to see that $t$ is a monoid morphism if and only if $\theta$ is an action of the monad $-\ast M$ on $U:\ba\longrightarrow\duof_{h}$.
By the universal property of the Eilenberg-Moore construction [\ref{FTM}], such actions are in natural bijection with liftings of $U$ to $\duof_{h}$-functors $\ba\longrightarrow\duof^{\ast M}$.
This describes a bijection between $(\textrm{Mon }\duof_{h})(M,\textrm{end }U)$ and the full subcategory of $(\duof_{h}\textrm{-Cat}\downarrow\ps\duof_{h})(U, \textrm{mod }M)$ consisting of the morphisms
$$
\xymatrix{
\ba \ar[rr]^-{T} \ar[rd]_-{U}^{}="1" &&  \duof^{\ast M} \ar[ld]^-{U_{M}}_{}="2" \ar@{}"1";"2"|-{\cong}^*!/d2pt/-{\labelstyle{\tau}} \\
& \duof_{h} &
}
$$
for which $\tau$ is an identity.
It remains to show that every general such morphism $(T,\tau )$ is isomorphic to one for which $\tau$ is an identity.
However, each $(T,\tau )$ determines an action
$$\xymatrix{U\ast M \ar@{}[r]|-{\cong}^-*!/u2pt/{\labelstyle{\tau}} & U_{M}T\ast M = (U_{M}\ast M)T \ar[r]^-{\chi T} & U_{M}T \ar@{}[r]|-{\cong}^-*!/u2pt/{\labelstyle{\tau^{-1}}} & U}$$
of the monad $-\ast M$ on $U$.
By the universal property, we induce a morphism
$$
\xymatrix{
\ba \ar[rr]^-{T'} \ar[rd]_-{U}^>>>>>>>>>{}="1" &&  \duof^{\ast M} \ar[ld]^-{U_{M}}_ >>>>>>>>>{}="2" \ar@{}"1";"2"|-{=} \\
& \duof_{h} &
}
$$
and an invertible $2$-cell $(T,\tau )\cong (T',1)$ in $\duof_{h}\textrm{-Cat}\downarrow\ps\duof_{h}$.\qed
\end{proof}

In other words, we have a biadjunction 
\begin{eqnarray}\label{biadjunc}
\xymatrix{(\textrm{Mon }\duof_{h})\op \ar@<-1.5ex>[rr]_-{\textrm{mod}}="1" && \ar@<-1.5ex>[ll]_-{\textrm{end}}="2" \ar@{}"1";"2"|-{\perp} \duof_{h}\textrm{-Cat}\downarrow\ps\tract\duof_{h}}
\end{eqnarray}
where the $2$-category on the right has objects restricted to the tractable $U$.
As a consequence, notice that end takes each $2$-cell to an identity (since all $2$-cells in Mod $\duof_{h}$ are identities).
Notice too from the notation that we are ignoring the monoidal structure in (\ref{biadjunc}).
This is because tractable $U$ are not generally closed under the monoidal structure of $\duof\textrm{-Cat}\downarrow\ps\duof$.

\begin{proposition}
Representable objects of $\duof\textrm{-Cat}\downarrow\ps\duof$ are closed under the monoidal structure.
\end{proposition}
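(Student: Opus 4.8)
The plan is to verify the two requirements of a monoidal sub-$2$-category: that the unit object $\ulcorner\mathbf{1}\urcorner:\overline{\mathbf{1}}\longrightarrow\duof_{h}$ is representable, and that $U\,\underline{\circ}\,V$ is representable whenever $U$ and $V$ are. The unit is immediate: $\ulcorner\mathbf{1}\urcorner$ is, by its construction, the representable $\duof_{h}$-functor $\overline{\mathbf{1}}(\bullet,-)$, since $\overline{\mathbf{1}}(\bullet,\bullet)=\mathbf{1}$ with composition $\mu$, and $\ulcorner\mathbf{1}\urcorner$ sends $\bullet$ to $\mathbf{1}\in\duof_{h}$ with hom-action $\mathbf{1}\longrightarrow[\mathbf{1},\mathbf{1}]$ the transpose of $\mu:\mathbf{1}\ast\mathbf{1}\longrightarrow\mathbf{1}$.

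For the tensor product, suppose $U:\ba\longrightarrow\duof_{h}$ is represented by $K\in\ba$ and $V:\bB\longrightarrow\duof_{h}$ by $L\in\bB$; I claim $U\,\underline{\circ}\,V:\ba\circ\bB\longrightarrow\duof_{h}$ is represented by $(K,L)\in\ba\circ\bB$. Replacing $U$ and $V$ by the isomorphic $\duof_{h}$-functors $\ba(K,-)$ and $\bB(L,-)$, one has on objects
$$(U\,\underline{\circ}\,V)(A,B)=UA\circ VB=\ba(K,A)\circ\bB(L,B)=(\ba\circ\bB)\big((K,L),(A,B)\big),$$
using that $\hat{\circ}$ acts as $\circ$ on objects and the definition of the hom-objects of $\ba\circ\bB$. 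The content is to check that this identification is an isomorphism of $\duof_{h}$-functors, i.e.\ that the two sides agree on homs.

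I would carry out this hom-level comparison by transposing both hom-actions under the left-closedness of $\duof_{h}$ (the adjunction $-\ast Y\dashv[Y,-]$) and comparing the resulting maps out of $(\ba(A,A')\circ\bB(B,B'))\ast(\ba(K,A)\circ\bB(L,B))$ into $\ba(K,A')\circ\bB(L,B')$. On the side of $(\ba\circ\bB)((K,L),-)$ the transpose is, by the definition of composition in $\ba\circ\bB$, the map $\gamma$ followed by $\mathrm{comp}_{\ba}\circ\mathrm{comp}_{\bB}$. On the side of $U\,\underline{\circ}\,V=\hat{\circ}\circ(U\circ V)$ the transpose is $(U_{A,A'}\circ V_{B,B'})\ast 1$ followed by $\gamma$ followed by $ev\circ ev$, where $U_{A,A'}$ and $V_{B,B'}$ are the hom-actions of $U$ and $V$; here I use that the transpose of the map defining $\hat{\circ}$ on homs is, by diagram (\ref{duofhmulti}), exactly $\gamma$ then $ev\circ ev$. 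Naturality of $\gamma$ slides $(U_{A,A'}\circ V_{B,B'})\ast 1$ back past $\gamma$, and the triangle identity for the adjunction rewrites $ev\circ(U_{A,A'}\ast 1)$ as $\mathrm{comp}_{\ba}$ (since $U_{A,A'}$ is the transpose of $\mathrm{comp}_{\ba}$), and similarly on the $\bB$ side; both composites then collapse to $\gamma$ followed by $\mathrm{comp}_{\ba}\circ\mathrm{comp}_{\bB}$. Hence $U\,\underline{\circ}\,V\cong(\ba\circ\bB)((K,L),-)$ over the identity of $\ba\circ\bB$, so $U\,\underline{\circ}\,V$ is representable; the associativity and unit constraints of $\underline{\circ}$ restrict automatically, and the full sub-$2$-category of $\duof\textrm{-Cat}\downarrow\ps\duof$ on representable objects is monoidal.

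The only genuine obstacle I anticipate is bookkeeping in this last step: keeping the two distinct occurrences of $\gamma$ straight --- one in the composition law of $\ba\circ\bB$, the other inside the definition of $\hat{\circ}$ on homs via (\ref{duofhmulti}) --- and invoking naturality of $\gamma$ together with the triangle identities in the right order. Conceptually nothing beyond these formal facts and the Yoneda description of representables (Proposition \ref{yonedaprop}) is required.
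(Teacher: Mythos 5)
Your proposal is correct and follows the same route as the paper: the paper's proof consists precisely of the two assertions that $\hat{\circ}\circ(\ba(A,-)\circ\bB(B,-))\cong(\ba\circ\bB)((A,B),-)$ and that $\ulcorner\mathbf{1}\urcorner=\overline{\mathbf{1}}(\bullet,-)$, stated as diagrams without further justification. You have simply supplied the hom-level verification (transposing under $-\ast Y\dashv[Y,-]$, naturality of $\gamma$, and the triangle identities) that the paper leaves implicit, and that verification is accurate.
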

\begin{proof}
$$\xymatrix{\ba\circ\bB \ar[rrr]^-{\ba (A,-)\circ\bB (B,-)} \ar@<-1ex>@/_4ex/[rrrr]_-{(\ba\circ\bB )((A,B),-)} \ar@{}@<-2.5ex>[rrrr]|{\cong} &&& \duof_{h}\circ\duof_{h} \ar[r]^-{\hat{o}} & \duof_{h}}\, .$$
and
$$\xymatrix{\ulcorner \mathbf{1} \urcorner = \overline{\mathbf{1}} (\bullet , -): \overline{\mathbf{1}} \ar[r] & \duof_{h}}.\qed$$
\end{proof}

Let $\duof\textrm{-Cat}\downarrow\ps\rep\duof$ denote the monoidal full sub-$2$-category of $\duof\textrm{-Cat}\downarrow\ps\duof$ consisting of the representable objects.
The biadjunction (\ref{biadjunc}) restricts to a biadjunction
\begin{eqnarray}\label{biadjunc2}
\xymatrix{(\textrm{Mon }\duof_{h})\op \ar@<-1.5ex>[rr]_-{\textrm{mod}}="1" && \ar@<-1.5ex>[ll]_-{\textrm{end}}="2" \ar@{}"1";"2"|-{\perp} \duof_{h}\textrm{-Cat}\downarrow\ps\rep\duof_{h}}
\end{eqnarray}
and we have already pointed out that mod is monoidal; see (\ref{modismonoidal}).
In fact, we shall soon see that this is a monoidal biadjunction.

First note that, if $U:\ba\longrightarrow\duof_{h}$ is represented by $K$ then we have a monoidal isomorphism
\begin{eqnarray}\label{isoreduce}
\xymatrix{\textrm{end }U = \{ U,U \} \ar@{}[r]|-{\cong}^-*!/u2pt/{\labelstyle{(\ref{yonedaprop})}} & UK \cong \ba (K,K).}
\end{eqnarray}
In particular, for a monoid $M$ in $\duof_{h}$, using Proposition \ref{canonicalmonoidal}, we obtain a monoid isomorphism
\begin{eqnarray}\label{endmodmism}
\textrm{end mod }M\cong M
\end{eqnarray}
which is in fact the counit for (\ref{biadjunc2}), confirming that mod is an equivalence on homs.

\begin{proposition}\label{endisstrongmonoidal}
The $2$-functor $\mathrm{end}$ in  (\ref{biadjunc2}) is strong monoidal.
\end{proposition}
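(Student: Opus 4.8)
The plan is to construct the strong monoidal structure on $\mathrm{end}$ directly, exploiting the fact that $\mathrm{end}$ only ever needs to be evaluated on \emph{representable} objects and that on such objects it is computed by the enriched Yoneda Lemma. As a guide, note that since $\mathrm{mod}$ carries a monoidal structure (the comparison $\Phi$ of (\ref{modismonoidal})) and $\mathrm{end}$ is its left biadjoint, doctrinal adjunction already endows $\mathrm{end}$ with a canonical oplax monoidal structure whose comparisons are the mates of those of $\mathrm{mod}$; so what has to be shown is that these comparisons are invertible and that the two coherence axioms of a monoidal $2$-functor hold.

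First I would recall, from Proposition \ref{yonedaprop} and the monoidal isomorphism (\ref{isoreduce}), that whenever $U:\ba\longrightarrow\duof_{h}$ is representable with representing object $K$ one has $\mathrm{end}\,U\cong\ba(K,K)$ as a monoid in $\duof_{h}$, the monoid structure on $\ba(K,K)$ being the composition and identity of the $\duof_{h}$-category $\ba$ at the object $K$. Now let $U:\ba\longrightarrow\duof_{h}$ and $V:\bB\longrightarrow\duof_{h}$ be representable, with representing objects $K$ and $L$. The computation in the proof that representable objects are closed under $\underline{\circ}$ shows that $U\,\underline{\circ}\,V:\ba\circ\bB\longrightarrow\duof_{h}$ is representable, with representing object the pair $(K,L)$ in $\ba\circ\bB$. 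Therefore
$$\mathrm{end}(U\,\underline{\circ}\,V)\;\cong\;(\ba\circ\bB)\bigl((K,L),(K,L)\bigr)\;=\;\ba(K,K)\circ\bB(L,L)$$
by the definition of the hom-objects of $\ba\circ\bB$. Moreover, by the definitions of composition and identities in $\ba\circ\bB$ (the former built from $\gamma$ and the componentwise $comp$, the latter from $\delta$ and the componentwise $\hat{id}$), the composition-monoid structure carried by $(\ba\circ\bB)((K,L),(K,L))$ is exactly the multiplication $(M\circ N)\ast(M\circ N)\xrightarrow{\gamma}(M\ast M)\circ(N\ast N)\xrightarrow{\mu\circ\mu}M\circ N$ and unit $J\xrightarrow{\delta}J\circ J\xrightarrow{\eta\circ\eta}M\circ N$ that define the tensor $M\circ N$ of the monoids $M=\ba(K,K)$ and $N=\bB(L,L)$ in $\mathrm{Mon}\,\duof$ --- indeed that tensor was \emph{defined} as the restriction of the tensor of $\duof\textrm{-Cat}$ to one-object $\duof_{h}$-categories. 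Hence the displayed isomorphism is an isomorphism of monoids and is the comparison $\mathrm{end}(U\,\underline{\circ}\,V)\cong\mathrm{end}\,U\circ\mathrm{end}\,V$. In the same way $\ulcorner\mathbf{1}\urcorner=\overline{\mathbf{1}}(\bullet,-)$ is representable with representing object $\bullet$, so $\mathrm{end}\,\ulcorner\mathbf{1}\urcorner\cong\overline{\mathbf{1}}(\bullet,\bullet)=(\mathbf{1},\mu,\tau)$, which is the monoidal unit of $\mathrm{Mon}\,\duof$.

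It then remains to verify naturality of these comparisons and the coherence axioms. Naturality in $1$-cells follows from the naturality of the Yoneda isomorphism (\ref{isoreduce}); since $\mathrm{end}$ sends every $2$-cell to an identity and $\mathrm{Mon}\,\duof$ has only identity $2$-cells, this yields all of the required pseudonaturality. For coherence, the associativity constraint of $\underline{\circ}$ on $\duof\textrm{-Cat}\downarrow\ps\duof$ is, by its defining diagram, induced by the associativity constraint $a$ of the vertical monoidal structure of $\duof$ transported through $\hat{\circ}$, and the associativity constraint of the tensor $\circ$ on $\mathrm{Mon}\,\duof$ is likewise induced by $a$ (being the restriction of the constraint of $\duof\textrm{-Cat}$); since (\ref{isoreduce}) is natural, the pentagon and the unit triangle for $\mathrm{end}$ reduce to the corresponding coherence diagrams for the vertical structure of $\duof$. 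Therefore $\mathrm{end}$, equipped with the above comparisons, is strong monoidal (and this structure is compatible with the oplax structure coming from the adjunction, so that (\ref{biadjunc2}) is a monoidal biadjunction).

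The step I expect to be the main obstacle is the on-the-nose identification used in the second paragraph: that the $\duof_{h}$-category composition of $\ba\circ\bB$ at the object $(K,L)$ \emph{is} the monoid multiplication of $\ba(K,K)\circ\bB(L,L)$ in $\mathrm{Mon}\,\duof$ --- equivalently, that the instance of the middle-of-four interchange $\gamma$ appearing in the definition of $\ba\circ\bB$ coincides with the one appearing in the definition of the tensor of monoids, and that the evaluation-based description of $\mathrm{end}$ coming from the adjunction (\ref{rightadj}) is transported by (\ref{isoreduce}) to precisely this composition. This is a diagram chase rather than a conceptual difficulty, and it is already essentially contained in the assertion that (\ref{isoreduce}) is a monoidal isomorphism together with the construction of the monoidal structure on $\mathrm{Mon}\,\duof$ as a restriction of that on $\duof\textrm{-Cat}$.
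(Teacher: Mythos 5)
Your proposal is correct and follows essentially the same route as the paper: both reduce $\mathrm{end}$ on representables to the hom-object at the representing object via the Yoneda isomorphism (\ref{isoreduce}), observe that the representing object of $U\,\underline{\circ}\,V$ is the pair $(K,L)$ so that $\mathrm{end}(U\,\underline{\circ}\,V)\cong(\ba\circ\bB)((K,L),(K,L))=\ba(K,K)\circ\bB(L,L)$, and handle the unit the same way. The paper's proof is just the three-line version of this computation; your additional verification that the monoid structures and coherence constraints match is exactly the content the paper leaves implicit in asserting that (\ref{isoreduce}) is a \emph{monoidal} isomorphism.
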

\begin{proof}
The isomorphism (\ref{isoreduce}) gives
\begin{eqnarray*}
\textrm{end }(\ba\circ\bB )((A,B),-) & \cong & (\ba\circ\bB )((A,B),(A,B)) \\
& \cong & \ba (A,A) \circ \bB (B,B)  \\
& \cong & \textrm{end }\ba (A,-) \circ \textrm{end }\bB (B,-)
\end{eqnarray*}
and
$$\textrm{end }\overline{\mathbf{1}}(\bullet , -) \cong \overline{\mathbf{1}}(\bullet , \bullet ) \cong \mathbf{1}. \qed  $$
\end{proof}

As previously remarked, a monoid in $(\textrm{Mon }\duof_{h})\op$ is precisely a bimonoid in $\duof$; see Definition \ref{bimonoidinf}.
Since Mon $\duof$ has discrete homs, these monoids are the same as pseudomonoids.
The biadjunction (\ref{biadjunc}) determines a biadjunction
\begin{eqnarray}\label{biadjunc3}
\xymatrix{(\textrm{Bimon }\duof_{h})\op \ar@<-1.5ex>[rr]_-{\textrm{mod}}="1" && \ar@<-1.5ex>[ll]_-{\textrm{end}}="2" \ar@{}"1";"2"|-{\perp} \textrm{Mon}\lowps(\duof\textrm{-Cat}\downarrow\ps\rep\duof ).}
\end{eqnarray}
A pseudomonoid in $\duof\textrm{-Cat}\downarrow\duof$ is a monoidal $\duof_{h}$-category $\ba$ together with a strong monoidal $\duof_{h}$-functor $U:\ba\longrightarrow\duof_{h}$ (where $\duof_{h}$ has $\hat{\circ}$ as the monoidal structure).

This leads to the following lifting to the duoidal setting of a result attributed to Bodo Pareigis (see [\ref{Bodo1}], [\ref{Bodo2}] and [\ref{Bodo3}]).

\begin{theorem}\label{bimonoidisoclassmonoidal}
For a horizontal monoid $M$ in a duoidal $\ev$-category $\duof$, bimonoid structures on $M$ are in bijection with isomorphism classes of monoidal structures on $\duof^{\ast M}$ such that $U_{M}:\duof^{\ast M}\longrightarrow\duof$ is strong monoidal into the vertical structure on $\duof$.
\end{theorem}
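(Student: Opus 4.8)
The plan is to deduce the statement from the biadjunction (\ref{biadjunc3}), using that $\textrm{mod}$ is fully faithful — equivalently that the counit $\textrm{end}\,\textrm{mod}\cong\textrm{id}$ of (\ref{endmodmism}) holds — and that $U_{M}$ is representable by $M$. First I would reinterpret the right-hand side: by the description recorded just after (\ref{biadjunc3}), a pseudomonoid in $\duof\textrm{-Cat}\downarrow\ps\duof$ is exactly a monoidal $\duof_{h}$-category together with a strong monoidal $\duof_{h}$-functor to $(\duof_{h},\hat{\circ})$, so a monoidal structure on the fixed $\duof_{h}$-category $\duof^{\ast M}$ for which $U_{M}$ is strong monoidal into the vertical structure of $\duof$ is the same datum as a pseudomonoid structure on the object $U_{M}:\duof^{\ast M}\to\duof_{h}$. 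Since $U_{M}$ is representable (by $M$), these pseudomonoids are precisely the objects of $\textrm{Mon}\lowps(\duof\textrm{-Cat}\downarrow\ps\rep\duof)$ whose underlying $1$-cell is $U_{M}$.

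Because $\textrm{mod}$ is fully faithful it embeds $(\textrm{Bimon }\duof_{h})\op$ as a full sub-$2$-category of $\textrm{Mon}\lowps(\duof\textrm{-Cat}\downarrow\ps\rep\duof)$. A bimonoid structure $B$ on $M$ has underlying $\ast$-monoid literally $M$, so $\duof^{\ast B}=\duof^{\ast M}$, $U_{B}=U_{M}$, and $\textrm{mod}\,B$ is a pseudomonoid over $U_{M}$; this is the forward passage $B\mapsto\textrm{mod}\,B$. Conversely, given a pseudomonoid $X$ over $U_{M}$, the bimonoid $\textrm{end}\,X$ has underlying monoid $\textrm{end}\,U_{M}=\textrm{end}\,\textrm{mod}\,M$, canonically $\cong M$ by (\ref{endmodmism}) (which rests on Proposition \ref{yonedaprop} and Proposition \ref{canonicalmonoidal}); transporting its $\circ$-comonoid part along this isomorphism produces a bimonoid structure on $M$. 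The round trip $B\mapsto\textrm{mod}\,B\mapsto\textrm{end}\,\textrm{mod}\,B$ returns $B$ by the counit isomorphism of (\ref{endmodmism}).

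For the other round trip I would show that every pseudomonoid $X$ over $U_{M}$ is isomorphic, over $U_{M}$, to $\textrm{mod}(\textrm{end}\,X)$, which finishes the correspondence on isomorphism classes. The crucial observation is that $U_{M}$ \emph{is} $\textrm{mod}\,M$, so it lies in the image of the fully faithful functor $\textrm{mod}$; hence the unit of the underlying biadjunction (\ref{biadjunc2}) at $U_{M}$ is an equivalence in $\duof\textrm{-Cat}\downarrow\ps\duof$. The unit of the lifted biadjunction (\ref{biadjunc3}) at $X$ has this equivalence as its underlying $1$-cell, and a morphism of pseudomonoids whose underlying $1$-cell is an equivalence is an equivalence of pseudomonoids; so $X\cong\textrm{mod}(\textrm{end}\,X)$ over $U_{M}$, as required.

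I expect the substance of the work — beyond invoking (\ref{biadjunc3}) — to be the compatibility bookkeeping assembled above: checking that $\textrm{Mon}\lowps$ applied to (\ref{biadjunc2}) really is (\ref{biadjunc3}) with the lifted unit's underlying $1$-cell the base unit, that $\textrm{end}\,X$ carries $M$ as its underlying monoid and not merely as its underlying object, that transport along $\textrm{end}\,U_{M}\cong M$ is well defined, and — most delicately — that ``isomorphism classes of monoidal structures on the fixed $\duof^{\ast M}$ for which $U_{M}$ is strong monoidal'' is exactly the equivalence relation cut out by isomorphism-over-$U_{M}$ in $\textrm{Mon}\lowps(\duof\textrm{-Cat}\downarrow\ps\rep\duof)$, so that the bijection is genuinely with bimonoid structures on $M$.
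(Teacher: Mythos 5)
Your proposal is correct and takes essentially the same route as the paper: both deduce the theorem from the monoidal biadjunction (\ref{biadjunc3}) combined with Proposition \ref{endisstrongmonoidal} and the counit isomorphism $\textrm{end}\,\textrm{mod}\,M\cong M$ of (\ref{endmodmism}), after identifying pseudomonoid structures on $U_{M}$ in $\duof\textrm{-Cat}\downarrow\ps\duof$ with monoidal structures on $\duof^{\ast M}$ for which $U_{M}$ is strong monoidal. The only difference is packaging: the paper directly computes the hom-categories $(\duof\textrm{-Cat}\downarrow\ps\duof)(\textrm{mod}\,M\circ\textrm{mod}\,M,\textrm{mod}\,M)\simeq(\textrm{Mon}\,\duof)(M,M\circ M)$ and $(\duof\textrm{-Cat}\downarrow\ps\duof)(\ulcorner\mathbf{1}\urcorner,\textrm{mod}\,M)\simeq(\textrm{Mon}\,\duof)(M,\mathbf{1})$ and reads off the correspondence of structure morphisms, whereas you transport pseudomonoid structures along the biadjunction restricted to its essential image via the unit being an equivalence at $\textrm{mod}\,M$ --- the same ingredients arranged slightly differently.
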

\begin{proof}
For any horizontal monoid $M$ in $\duof$ we (in the order they appear) have (\ref{biadjunc}), Proposition \ref{endisstrongmonoidal} and (\ref{endmodmism}) giving
$$\begin{array}{rl}
& (\duof\textrm{-Cat}\downarrow\ps\duof )(\textrm{mod }M\circ\textrm{mod }M, \textrm{mod }M)\\
\cong & (\textrm{Mon }\duof )(M, \textrm{end }(\textrm{mod }M \circ \textrm{mod }M)) \\
\cong & (\textrm{Mon }\duof )(M, \textrm{end mod }M \circ \textrm{end mod }M)) \\
\cong & (\textrm{Mon }\duof )(M, M\circ M)
\end{array}$$
and
$$ (\duof\textrm{-Cat}\downarrow\ps\duof )( \ulcorner\mathbf{1}\urcorner , \textrm{mod }M) \simeq (\textrm{Mon }\duof )(M,\mathbf{1} ).$$
By Proposition \ref{endisstrongmonoidal}, each bimonoid structure on $M$ yields a pseudomonoid structure on mod $M$; and each pseudomonoid structure on mod $M$ yields a bimonoid structure on end mod $M \cong M$.
The above equivalences give the bijection of the Theorem.\qed
\end{proof}

\section{Hopf bimonoids}\label{hopfbimonoids}

We have seen that a bimonoid $M$ in a duoidal $\ev$-category $\duof$ leads to a monoidal $\duof_{h}$-category $\duof^{\ast M}$ of right $M$-modules.
In this section, we are interested in when $\duof^{\ast M}$ is closed.
We lean heavily on papers [\ref{BLV}] and [\ref{CLS}].

A few preliminaries from [\ref{StreetPub88}] adapted to $\duof_{h}$-categories are required.
For an $\duof_{h}$-category $\ba$, a \emph{right $\ba$-module} $\xymatrix{W: {\cal J} \ar[r]|-*=0{\labelstyle{|}} & \ba}$ is a family of objects $WA$ of $\duof$ indexed by the objects $A$ of $\ba$ and a family 
$$\xymatrix{W_{AB} : WA \ast \ba (B,A) \ar[r] & WB}$$
of morphisms of $\duof$ indexed by pairs of objects $A$, $B$ of $\ba$, satisfying the action conditions.
For modules $\xymatrix{W, W': {\cal J} \ar[r]|-*=0{\labelstyle{|}} & \ba}$, define $[W,W']$ to be the limit as below when it exists in $\duof$.
\begin{eqnarray}
\xymatrix{
& [WA,W'A] \ar[rr]^-{-\ast\ba (B,A)} && [WA\ast\ba (B,A) , W'A\ast\ba (B,A)] \ar[dd]^-{[1,W'_{AB}]} \\
[W,W'] \ar@{-->}[ur] \ar@{-->}[dr] && \\
& [WB,W'B] \ar[rr]_-{[W_{AB}, 1]} && [WA\ast\ba (B,A), W'B]
}
\end{eqnarray}

\begin{example}
A monoid $M$ in $\duof_{h}$ can be regarded as a one object $\duof_{h}$-category.
A right $M$-module $\xymatrix{A: {\cal J} \ar[r]|-*=0{\labelstyle{|}} & M}$ is precisely an object of $\duof^{\ast M}$.
\end{example}

\begin{example}
For any $\duof_{h}$-functor $S:\ba\longrightarrow\bx$ and object $X$ of $\bx$, we obtain a right $\ba$-module $\xymatrix{\bx (S,X): {\cal J} \ar[r]|-*=0{\labelstyle{|}} & \ba}$ defined by the objects $\bx (SA,X)$ of $\duof$ and the morphisms
$$\xymatrix{\bx (SA,X) \ast \ba (B,A) \ar[r]^-{1\ast S_{BA}} & \bx (SA,X) \ast \bx (SB,SA) \ar[r]^-{comp} & \bx (SB,X)}.$$
\end{example}

Recall from [\ref{StreetPub88}] that the \emph{colimit }$\textrm{colim}(W,S)$\emph{ of $S:\ba\longrightarrow\bx$ weighted by $\xymatrix{W: {\cal J} \ar[r]|-*=0{\labelstyle{|}} & \ba}$} is an object of $\bx$ for which there is an $\duof_{h}$-natural isomorphism
\begin{eqnarray}\label{weightedcolim}
\bx (\textrm{colim}(W,S),X) \cong [W,\bx (S,X)]
\end{eqnarray}
By Yoneda, such an isomorphism is induced by the module morphism
\begin{eqnarray}
\xymatrix{\lambda : W \ar[r] & \bx (S,\textrm{colim}(W,S))}.
\end{eqnarray}
The $\duof_{h}$-functor $S:\ba\longrightarrow\bx$ is \emph{dense} when $\lambda = 1 : \bx (S,Y) \longrightarrow \bx (S,Y)$ induces
\begin{eqnarray}\label{densebycolim}
\textrm{colim}(\bx (S,Y),S) \cong Y
\end{eqnarray}
for all $Y$ in $\bx$.

\begin{proposition}\label{cornerjisdense}
The $\duof_{h}$-functor $\ulcorner J \urcorner : {\cal J} \longrightarrow\duof_{h}$ is dense.
\end{proposition}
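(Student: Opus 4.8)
The plan is to recognise this as the enriched fact that the unit object is dense in a (left-)closed monoidal category, here with $\duof_{h}$ self-enriched. Since $\duof$ is horizontally left closed, $\duof_{h}$ is a $\duof_{h}$-category with hom-objects $[X,Y]$, and $\ulcorner J\urcorner:{\cal J}\longrightarrow\duof_{h}$ is the $\duof_{h}$-functor picking out the unit object $J$, its functoriality constraint $J={\cal J}(\bullet,\bullet)\longrightarrow[J,J]$ being $\hat{id}_{J}$. I want to evaluate the weighted colimit in (\ref{densebycolim}) for $S=\ulcorner J\urcorner$, using the defining isomorphism (\ref{weightedcolim}), and show it is canonically $Y$.

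First I would unwind weighted colimits over ${\cal J}$. Since ${\cal J}$ has one object $\bullet$ with $\duof_{h}$-hom the unit $J$, a right ${\cal J}$-module $W$ is just an object $W\bullet$ of $\duof$ — the action $W\bullet\ast J\longrightarrow W\bullet$ being forced to be the unit isomorphism $\rho$ by the module unit axiom — and, correspondingly, by naturality of $\rho$ the limit defining $[W,W']$ reduces to $[W\bullet,W'\bullet]$. Hence for $S$ selecting $J$ the isomorphism (\ref{weightedcolim}) becomes $\duof_{h}(\textrm{colim}(W,\ulcorner J\urcorner),X)\cong[W\bullet,[J,X]]$, which identifies $\textrm{colim}(W,\ulcorner J\urcorner)$ with the copower of $J$ by $W\bullet$. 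That copower exists in $\duof_{h}$ by left closedness alone: it is $W\bullet\ast J$, since
$$\duof_{h}(W\bullet\ast J,X)=[W\bullet\ast J,X]\cong[W\bullet,[J,X]]$$
$\duof_{h}$-naturally in $X$, where the isomorphism $[A\ast B,C]\cong[A,[B,C]]$ follows by Yoneda from associativity of $\ast$ together with the defining adjunction $\duof(Z\ast B,C)\cong\duof(Z,[B,C])$.

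Next I would put $W=\duof_{h}(\ulcorner J\urcorner,Y)$, so that $W\bullet=\duof_{h}(J,Y)=[J,Y]$, giving
$$\textrm{colim}\bigl(\duof_{h}(\ulcorner J\urcorner,Y),\,\ulcorner J\urcorner\bigr)\cong[J,Y]\ast J.$$
Finally I would identify $[J,Y]\ast J$ with $Y$: the right unit isomorphism for $\ast$ gives $-\ast J\cong 1_{\duof_{h}}$, hence its right adjoint $[J,-]\cong 1_{\duof_{h}}$, so $[J,Y]\cong Y$ and thus $[J,Y]\ast J\cong Y\ast J\cong Y$. The point needing care — the expected main obstacle — is the bookkeeping tying everything to the weight $\lambda=1$: one must check that the reduction of the colimit over ${\cal J}$ to a copower respects the module structures, that $[A\ast B,C]\cong[A,[B,C]]$ is genuinely $\duof_{h}$-natural in $C$, and, at the end, that the chain of isomorphisms produced is exactly the one classified by $\lambda=1:\duof_{h}(\ulcorner J\urcorner,Y)\longrightarrow\duof_{h}(\ulcorner J\urcorner,Y)$ rather than merely some abstract isomorphism $\textrm{colim}(\cdots)\cong Y$. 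These verifications are routine, but they are where the actual work lies.
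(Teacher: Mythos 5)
Your argument is correct and is essentially the paper's own proof spelled out in more detail: both reduce (\ref{weightedcolim}) over the one-object $\duof_{h}$-category ${\cal J}$ to the internal-hom adjunction, use $[J,-]\cong 1$ (equivalently $[Y,X]\cong[J,[Y,X]]$) to identify the weight $\duof_{h}(\ulcorner J\urcorner,Y)$ with $Y$, and conclude $\mathrm{colim}([J,Y],\ulcorner J\urcorner)\cong Y$ by Yoneda. Your explicit identification of the colimit with the copower $W\bullet\ast J$ is the same observation the authors invoke later (``$\mathrm{colim}(W,S)\cong W\ast S$ when $S:{\cal J}\longrightarrow\duof_{h}$''), so there is no substantive difference in route.
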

\begin{proof}
From (\ref{weightedcolim}) we see that 
$$[Y,X] \cong [J, [Y,X]]$$
implies 
$$\textrm{colim}([J,Y],J)\cong Y,$$
which is (\ref{densebycolim}) in this case.\qed
\end{proof}

Another element of our analysis is to recast the middle-of-four interchange morphisms as a $2$-cell in $\duof_{h}$-Cat.

\begin{proposition}\label{gammaisatwocell}
The family of morphisms 
$$\xymatrix{\gamma : (X\circ Y )\ast (C \circ D)  \ar[rr] && (X\ast C) \circ (Y\ast D)}$$
defines an $\duof_{h}$-natural transformation
$$\xymatrix{
\ar@{}[rrrr]_<<<<<<<<<<<<<<<<<<<<<<<<<<<<*!/d12pt/{\labelstyle{\phantom{A}}}="1" && \duof \ar[rrd]^-{-\ast (C\circ D)} && \\
\duof\circ\duof \ar[rru]^-{\hat{\circ}} \ar[rrd]_-{(-\ast C)\circ (-\ast D)\phantom{AA}} &&&& \duof \\
\ar@{}[rrrr]^<<<<<<<<<<<<<<<<<<<<<<<<<<<<*!/u12pt/{\labelstyle{\phantom{A}}}="2" \ar@{=>}"1";"2"^-{\phantom{a}\gamma} && \duof\circ\duof \ar[rru]_-{\hat{\circ}} && 
}$$
for all objects $C$ and $D$ of $\duof$.
\end{proposition}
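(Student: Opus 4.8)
The plan is to unwind what $\duof_{h}$-naturality means here and reduce it to the associativity-type axiom (\ref{gammaaxiom1}) for $\gamma$, using that $\duof_{h}$ is horizontally left closed. First I would fix notation for the two $\duof_{h}$-functors: write $F = (-\ast(C\circ D))\circ\hat{\circ}$ and $G = \hat{\circ}\circ\bigl((-\ast C)\circ(-\ast D)\bigr)$, both from $\duof\circ\duof$ to $\duof_{h}$. These are honest $\duof_{h}$-functors, since $\hat{\circ}$ is one (Section \ref{enrichduoidsection}, using horizontal left closedness), each $-\ast M$ is one (opening of Section \ref{revisitetannakaathome}), and the $\circ$-tensor of $\duof_{h}$-functors is a $\duof_{h}$-functor. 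On objects $F(X,Y) = (X\circ Y)\ast(C\circ D)$ and $G(X,Y) = (X\ast C)\circ(Y\ast D)$, the prescribed component at $(X,Y)$ is the middle-of-four map $\gamma_{X,Y,C,D}$ (a morphism of $\duof$, as it should be), and the hom-object of $\duof\circ\duof$ at $\bigl((X,Y),(X',Y')\bigr)$ is $[X,X']\circ[Y,Y']$. Exactly as in the passage leading to (\ref{modulemorphcond}), I would transpose the $\duof_{h}$-naturality square along the closed structure; it becomes the assertion that the two composites
$$\bigl([X,X']\circ[Y,Y']\bigr)\ast F(X,Y)\longrightarrow G(X',Y')$$
agree, namely ``act through $F$ then apply $\gamma_{X',Y',C,D}$'' versus ``apply $1\ast\gamma_{X,Y,C,D}$ then act through $G$''.

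Next I would substitute the formulas for the hom-maps. By (\ref{duofhmulti}) the hom-map of $\hat{\circ}$ is $\gamma$ followed by $ev\circ ev$, and the hom-map of each $-\ast M$ is $ev\ast 1$ after a reassociation. After substitution, the first composite is (up to the associativity constraints of $\ast$ and $\circ$) a reassociation, then $\gamma\ast 1$, then $(ev\circ ev)\ast 1$, then $\gamma$; the second is $1\ast\gamma$, then $\gamma$, then $(ev\ast 1)\circ(ev\ast 1)$ after a reassociation in each $\circ$-factor.

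The key step is to push the evaluation maps $ev:[X,X']\ast X\to X'$ and $ev:[Y,Y']\ast Y\to Y'$ to the outside of both composites, using the $\ev$-naturality of $\gamma$ (Definition \ref{duocatdef}); both composites then take the form ``$\bigl((ev\ast 1)\circ(ev\ast 1)\bigr)$ after a common-target map''. It therefore suffices to equate the two resulting maps
$$\bigl([X,X']\circ[Y,Y']\bigr)\ast\bigl((X\circ Y)\ast(C\circ D)\bigr)\longrightarrow\bigl(([X,X']\ast X)\ast C\bigr)\circ\bigl(([Y,Y']\ast Y)\ast D\bigr),$$
one built from $\gamma\ast 1$ then $\gamma$, the other from $1\ast\gamma$ then $\gamma$. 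This is precisely axiom (\ref{gammaaxiom1}), instantiated with $A,B$ the objects $[X,X'],[Y,Y']$, with the pair $C,D$ of (\ref{gammaaxiom1}) set to $X,Y$, and with the pair $E,F$ of (\ref{gammaaxiom1}) set to the fixed objects $C,D$ of the present statement. Conclude.

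I expect the only real obstacle to be bookkeeping: carrying the suppressed constraints $a_{\ast}$ and $a_{\circ}$ correctly through the transposition and the naturality moves, so that the endpoint identity is literally an instance of (\ref{gammaaxiom1}) rather than (\ref{gammaaxiom1}) pre- and post-composed with a coherence cell; this is routine via Mac Lane coherence for the two monoidal structures. No ingredient beyond (\ref{gammaaxiom1}) and the ordinary naturality of $\gamma$ is needed.
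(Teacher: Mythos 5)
Your proposal is correct and follows essentially the same route as the paper: transpose the $\duof_{h}$-naturality condition to its module-morphism form (\ref{modulemorphcond}), substitute the hom-maps of $\hat{\circ}$ and $-\ast M$, and observe that the resulting square splits into an instance of axiom (\ref{gammaaxiom1}) (with exactly your instantiation of the variables) together with ordinary naturality of $\gamma$ applied to the evaluations. The paper simply records this as a single commutative diagram ``written as if $\ast$ were strict,'' leaving the coherence bookkeeping you mention implicit.
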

\begin{proof}
Regard the commutative diagram
$$\xymatrix{
([X,U]\circ [Y,V] ) \ast ((X\ast C)\circ (Y\ast D)) \ar[d]_-{\gamma}="1" & \ar[l]_-{1\ast\gamma} ([X,U]\circ [Y,V] )\ast (X\circ Y)\ast (C\circ D) \ar[d]^-{\gamma\ast 1}="2" \ar@{}"1";"2"|-*=0{(\ref{gammaaxiom1})}  \\
([X,U]\ast X\ast C)\circ ([Y,V]\ast Y\ast D) \ar[d]_-{(ev\ast C)\circ (ev\ast D)}="1" & \ar[l]^-{\gamma}  ( ([U,X]\ast X)\circ ([Y,V]\ast Y) ) \ast (C\circ D) \ar[d]^-{(ev\circ ev)\ast 1}="2" \ar@{}"1";"2"|-*=0{\labelstyle{naturality}} \\
 (U\ast C)\circ (V\ast D) &  (U\circ V)\ast (C\circ D) \ar[l]^-{\gamma}
}$$
in which we have written as if $\ast$ were strict.\qed
\end{proof}

\begin{proposition}\label{thetacolimisinv}
Suppose $\theta: F \Longrightarrow G: \bx \longrightarrow \by$ is an $\duof_{h}$-natural transformation between $\duof_{h}$-functors $F$ and $G$ which preserve colimits weighted by $\xymatrix{W: {\cal J} \ar[r]|-*=0{\labelstyle{|}} & \ba}$.
If each $\theta_{SA}:FSA\longrightarrow GSA$ is invertible then so is
$$\xymatrix{\theta_{\mathrm{colim}(W,S)} : F\, \mathrm{colim}(W,S) \ar[r] & G\, \mathrm{colim}(W,S)}.$$
\end{proposition}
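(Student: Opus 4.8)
The plan is to reduce the claim to the enriched Yoneda Lemma by testing $\theta_{\mathrm{colim}(W,S)}$ against the representables $\by (-,Y)$ for $Y$ in $\by$. First I would note that, since every component $\theta_{SA}:FSA\longrightarrow GSA$ is invertible, the whiskered $\duof_{h}$-natural transformation $\theta S:FS\Longrightarrow GS$ is itself invertible, with inverse $\theta^{-1}S$ having components $\theta_{SA}^{-1}$ (these are automatically $\duof_{h}$-natural). Consequently, for each object $Y$ of $\by$, the induced morphism of right $\ba$-modules $\by (\theta S,Y):\by (GS,Y)\longrightarrow\by (FS,Y)$ --- whose component at $A$ is $\by (\theta_{SA},Y)$ --- is invertible; and since $[W,-]$ is functorial in its second argument it carries this isomorphism to an isomorphism $[W,\by (\theta S,Y)]:[W,\by (GS,Y)]\longrightarrow[W,\by (FS,Y)]$ in $\duof$.

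Next I would bring in colimit-preservation. Because $F$ preserves the colimit weighted by $W$, postcomposing the cocone $\lambda :W\longrightarrow\bx (S,\mathrm{colim}(W,S))$ with $F$ exhibits $F\,\mathrm{colim}(W,S)$ as $\mathrm{colim}(W,FS)$; by the defining isomorphism (\ref{weightedcolim}) this yields an $\duof_{h}$-natural isomorphism $\by (F\,\mathrm{colim}(W,S),Y)\cong[W,\by (FS,Y)]$, and symmetrically $\by (G\,\mathrm{colim}(W,S),Y)\cong[W,\by (GS,Y)]$ using $G$. I would then identify, under these two isomorphisms, the map $\by (\theta_{\mathrm{colim}(W,S)},Y)$ with $[W,\by (\theta S,Y)]$. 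Combined with the previous paragraph this shows $\by (\theta_{\mathrm{colim}(W,S)},Y)$ invertible for every $Y$, $\duof_{h}$-naturally in $Y$, whence the enriched Yoneda Lemma forces $\theta_{\mathrm{colim}(W,S)}$ to be invertible.

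The one point I expect to need real care --- everything else being formal bookkeeping with the universal property of weighted colimits, functoriality of $[W,-]$, and naturality of (\ref{weightedcolim}) --- is the identification just claimed: that $\theta_{\mathrm{colim}(W,S)}$ is precisely the comparison on $W$-weighted colimits induced by the diagram morphism $\theta S:FS\Longrightarrow GS$. This is exactly where the full $\duof_{h}$-naturality of $\theta$ is used, rather than merely the invertibility of its components at objects of the form $SA$: concretely one checks that $F\lambda$ followed by postcomposition with $\theta_{\mathrm{colim}(W,S)}$ agrees with $\theta S$ followed by $G\lambda$, as module morphisms $W\longrightarrow\by (FS,G\,\mathrm{colim}(W,S))$, and then appeals to uniqueness in the universal property of $\mathrm{colim}(W,FS)=F\,\mathrm{colim}(W,S)$. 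An equivalent, more hands-on alternative --- which sidesteps the Yoneda invocation entirely --- is to let $\phi :G\,\mathrm{colim}(W,S)\longrightarrow F\,\mathrm{colim}(W,S)$ be the comparison induced by $\theta^{-1}S$ and verify, again from uniqueness of factorizations through the colimit cocones, that $\phi$ and $\theta_{\mathrm{colim}(W,S)}$ are mutually inverse; this still rests on the same naturality check.\qed
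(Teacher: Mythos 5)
Your proof is correct and follows essentially the same route as the paper, whose entire argument is the commutative square identifying $\theta_{\mathrm{colim}(W,S)}$ with the comparison $\mathrm{colim}(1,\theta S)$ under the isomorphisms coming from colimit preservation. You simply make explicit (via representables, Yoneda, and the naturality check on $\lambda$) the justification that the paper leaves implicit in drawing that square.
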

\begin{proof}
$$\xymatrix{
F\,\textrm{colim}(W,S) \ar[rr]^-{\theta_{\textrm{colim}(W,S)}} \ar[d]_-{\cong} && G\,\textrm{colim}(W,S) \ar[d]^-{\cong}\phantom{.} &\\
 \textrm{colim}(W,FS) \ar[rr]_-{\theta_{\textrm{colim}(1,\theta_{S})}} &&  \textrm{colim}(W,GS) .& \qed
}$$
\end{proof}

\begin{definition}
For a bimonoid $M$ in a duoidal category $\duof$, the composite $v_{\ell}$:
$$\xymatrix{(J\circ M)\ast M \ar[r]^-{1\ast\delta} & (J\circ M)\ast (M\circ M) \ar[r]^-{\gamma} & (J\ast M) \circ (M\ast M) \ar[r]^-{\ell\circ\mu} & M\circ M}$$
is called the \emph{left fusion morphism}.
The composite $v_{r}$:
$$\xymatrix{(M\circ J)\ast M \ar[r]^-{1\ast\delta} & (M\circ J)\ast (M\circ M) \ar[r]^-{\gamma} & (M\ast M) \circ (J\ast M) \ar[r]^-{\mu\circ\ell} & M\circ M}$$
is called the \emph{right fusion morphism}.
We call $M$ \emph{left Hopf} when $v_{\ell}$ is invertible and \emph{right Hopf} when $v_{r}$ is invertible.
We call $M$ \emph{Hopf} when both $v_{\ell}$ and $v_{r}$ are invertible.
\end{definition}

Suppose $\ba$ and $\bx$ are monoidal $\duof_{h}$-categories and $U:\ba\longrightarrow\bx$ is a monoidal $\duof_{h}$-functor.
Writing $\circ$ for the tensor and $\mathbf{1}$ for the tensor unit, we must have morphisms
$$\xymatrix{\varphi : UA\circ UB \ar[r] & U (A\circ B)  & \textrm{and} & \varphi_{0} : \mathbf{1} \ar[r] & U\mathbf{1}}$$
satisfying the usual Eilenberg-Kelly [\ref{SammyMax}] conditions.
Suppose $\ba$ and $\bx$ are left closed and write $\lom (A,B)$ and $\lom (X,Y)$ for the left homs.
As pointed out by Eilenberg-Kelly, the monoidal structure $\varphi$, $\varphi_{0}$ is in bijection with \emph{left closed structure}
$$\xymatrix{\varphi^{\ell} : U\lom (A,B)\circ UB \ar[r] & U \lom (UA,UB)  & \textrm{and} & \varphi_{0} : \mathbf{1} \ar[r] & U\mathbf{1},}$$
where $\varphi^{\ell}$ corresponds under the adjunction to the composite
$$\xymatrix{U\, hom (A,B)\circ UA \ar[r]^{\varphi} & U(hom(A,B)\circ A) \ar[r]^-{U_{ev}} & UB}$$
Following [\ref{DSquantum}], we say $U$ is \emph{strong left closed} when both $\varphi^{\ell}$ and $\varphi_{0}$ are invertible.

Recall from [\ref{BLV}] (and [\ref{CLS}] for the enriched situation) that the Eilenberg-Moore (enriched) category for an opmonoidal monad $T$ on $\bx$ is left closed and the forgetful $U_{T} : \bx^{T} \longrightarrow \bx$ is strong left closed if and only if $T$ is ``left Hopf''.
The monad $T$ is \emph{left Hopf} when the \emph{left fusion morphism}
\begin{eqnarray}
\xymatrix{v_{\ell} (X,Y) : T(X\circ TY) \ar[r]^-{\varphi} & TX\circ T^{2}Y \ar[r]^-{1\circ\mu} & TX\circ TY }
\end{eqnarray}
is invertible for all $X$ and $Y$.
It is \emph{right Hopf} when the \emph{right fusion morphism}
\begin{eqnarray}
\xymatrix{v_{r} (X,Y) : T(TX\circ Y) \ar[r]^-{\varphi} & T^{2}X\circ TY \ar[r]^-{\mu\circ 1} & TX\circ TY }
\end{eqnarray}
is invertible.

In particular, for a bimonoid $M$ in $\duof$, taking $T=-\ast M$, we see that $v_{\ell}(X,Y)$ is the composite
\begin{eqnarray}\label{lefthopfv}\xymatrix{
(X\circ (Y\ast M))\ast M \ar[dd]_-{v_{\ell}(X,Y)} \ar[rr]^-{1\ast\delta} && (X\circ (Y\ast M))\ast (M\circ M) \ar[d]^-{\gamma}  \\
&& (X\ast M)\circ ((Y\ast M)\ast M) \ar[d]^-{1\circ a}_-{\cong} \\
(X\ast M)\circ (Y\ast M) && \ar[ll]^-{1\circ (1\ast \mu)} (X\ast M) \circ (Y\ast (M\ast M)) 
}\end{eqnarray}
and that $v_{r}(X,Y)$ is
\begin{eqnarray}\label{righthopfv}\xymatrix{
((X\ast M) \circ Y)\ast M \ar[dd]_-{v_{r}(X,Y)} \ar[rr]^-{1\ast\delta} && ((X\ast M)\circ Y)\ast (M\circ M) \ar[d]^-{\gamma}  \\
&& ((X\ast M)\ast M)\circ (Y\ast M) \ar[d]^-{ a\circ 1}_-{\cong} \\
(X\ast M)\circ(Y\ast M) && \ar[ll]^-{(1\ast \mu) \circ 1} (X\ast (M\ast M))\circ (Y\ast M) .
}\end{eqnarray}

Recall from Section \ref{enrichduoidsection} that, when $\duof$ is horizontally left closed, not only does it become an $\duof_{h}$-category, it becomes a pseudomonoid in $\duof_{h}$-Cat using the tensor $\hat{\circ}$.
That is, $(\duof , \hat{\circ} ,  \ulcorner\mathbf{1}\urcorner )$ is a monoidal $\duof_{h}$-category.

We are interested in when $(\duof , \hat{\circ} ,  \ulcorner\mathbf{1}\urcorner )$ is closed and when the closed structure lifts to $\duof^{\ast M}$ for a bimonoid $M$ in $\duof$.

\begin{proposition}\label{closureprop}
The monoidal $\duof_{h}$-category $(\duof , \hat{\circ} ,  \ulcorner\mathbf{1}\urcorner )$  is closed if and only if
\begin{enumerate}
\item[(i)] $\duof_{v}$ is a closed monoidal $\ev$-category, and 
\item[(ii)] there exist $\ev$-natural isomorphisms
\end{enumerate}
$$X\circ (W\ast Y) \cong W\ast (X\circ Y) \cong (W\ast X)\circ Y .$$
\end{proposition}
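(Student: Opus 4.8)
The plan is to read the closedness of $(\duof,\hat\circ,\ulcorner\mathbf{1}\urcorner)$ over the base $\duof_h$ through the standard enriched fact that, when the domain is tensored, a $\duof_h$-functor has a right $\duof_h$-adjoint precisely when its underlying ordinary functor has a right adjoint and the $\duof_h$-functor preserves tensors (see [\ref{KellyBook}]; the arguments here are insensitive to symmetry of the base). Two preliminaries make this applicable. First, $\duof$ regarded as a $\duof_h$-category via $[-,-]$ is tensored, with the tensor of $A$ by $W$ given by $W\ast A$: indeed $\duof(C,[W\ast A,B])\cong\duof((C\ast W)\ast A,B)\cong\duof(C\ast W,[A,B])\cong\duof(C,[W,[A,B]])$, so $[W\ast A,B]\cong[W,[A,B]]$ by the $\ev$-Yoneda lemma. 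Second, change of base along the strong monoidal $\ev$-functor $\duof(J,-)\colon\duof_h\to\ev$ carries the $\duof_h$-category $\duof$ to the $\ev$-category $\duof_v$ (since $\duof(J,[A,B])\cong\duof(J\ast A,B)\cong\duof(A,B)$) and carries $-\hat\circ Y$ to the vertical $\ev$-functor $-\circ Y$, and similarly $X\hat\circ-$ to $X\circ-$. Using the first fact, ``$-\hat\circ Y$ preserves tensors'' unwinds to $(W\ast X)\circ Y\cong W\ast(X\circ Y)$ and ``$X\hat\circ-$ preserves tensors'' to $X\circ(W\ast Y)\cong W\ast(X\circ Y)$ -- exactly the two isomorphisms of (ii).

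For the forward direction, assume $(\duof,\hat\circ)$ is closed, so each $-\hat\circ Y$ and each $X\hat\circ-$ has a right $\duof_h$-adjoint. Applying the change of base turns these into $\ev$-adjunctions between copies of $\duof_v$, which is precisely the assertion that $\duof_v$ is a (bi)closed monoidal $\ev$-category; this is (i). Moreover a left $\duof_h$-adjoint out of a tensored $\duof_h$-category preserves tensors, so by the last sentence of the previous paragraph we obtain the two isomorphisms (ii). Concretely, writing $\langle Y,Z\rangle$ for the $\circ$-internal hom supplied by (i) and using the defining adjunction of $[-,-]$, the $\duof_h$-naturality of $[X\circ Y,Z]\cong[X,\langle Y,Z\rangle]$ forces $\duof(W\ast(X\circ Y),Z)\cong\duof((W\ast X)\circ Y,Z)$ $\ev$-naturally in $Z$, whence $W\ast(X\circ Y)\cong(W\ast X)\circ Y$ by the $\ev$-Yoneda lemma.

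For the converse, assume (i) and (ii). By (i), each $-\circ Y$ and $X\circ-$ on $\duof_v$ has an $\ev$-right-adjoint, hence an ordinary right adjoint; by (ii), the $\duof_h$-functors $-\hat\circ Y$ and $X\hat\circ-$ preserve tensors. The lifting fact then equips each with a right $\duof_h$-adjoint, making $(\duof,\hat\circ,\ulcorner\mathbf{1}\urcorner)$ a closed monoidal $\duof_h$-category; the compatibility of the resulting families of internal homs with the monoidal structure (the closed-monoidal coherence) is routine. One may also argue directly: from the $\circ$-internal hom $\langle Y,Z\rangle$ of (i) and the isomorphism $W\ast(X\circ Y)\cong(W\ast X)\circ Y$ of (ii), $\duof(C,[X\circ Y,Z])\cong\duof(C\ast(X\circ Y),Z)\cong\duof((C\ast X)\circ Y,Z)\cong\duof(C\ast X,\langle Y,Z\rangle)\cong\duof(C,[X,\langle Y,Z\rangle])$ naturally in $C$, so $[X\circ Y,Z]\cong[X,\langle Y,Z\rangle]$ exhibits $\langle Y,Z\rangle$ as the $\hat\circ$-internal hom in $\duof_h$.

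I expect the main obstacle to be the $\duof_h$-naturality hidden inside the lifting fact: one must know that the isomorphism extracted from (ii) via the $\ev$-Yoneda lemma is genuinely $\duof_h$-natural, equivalently that $-\hat\circ Y$ preserves $\duof_h$-tensors and not merely conical ones, which in turn means it acts compatibly on the $\duof_h$-valued homs through the middle-of-four map (\ref{duofhmulti}). Here Proposition \ref{gammaisatwocell} is the key input: since $\gamma$ underlies a $\duof_h$-natural transformation and the isomorphisms of (ii) are assembled from $\gamma$ together with the unit constraints of $\ast$, the $\ev$-natural isomorphism does upgrade to a $\duof_h$-natural one; alternatively, the density of $\ulcorner J\urcorner\colon{\cal J}\to\duof_h$ (Proposition \ref{cornerjisdense}) lets one check $\duof_h$-naturality after evaluating at $J$, reducing it to the $\ev$-computation already done. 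A secondary point to track throughout is that $\duof_h$ need not be symmetric, so one must consistently use the one-sided versions of tensoring, closedness and the adjoint-lifting fact, keeping the handedness of $\ast$ straight.
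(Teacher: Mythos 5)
Your proof is correct, and its computational core is the same as the paper's: the paper simply observes that a $\duof_{h}$-natural isomorphism $[X\circ Y,Z]\cong[X,\lom(Y,Z)]$ unwinds, by Yoneda and left closedness of $\ast$, to $\duof(W\ast(X\circ Y),Z)\cong\duof(W\ast X,\lom(Y,Z))$, extracts (i) by setting $W=J$, and then reads off the isomorphisms (ii); your ``direct'' chain $\duof(C,[X\circ Y,Z])\cong\duof(C\ast(X\circ Y),Z)\cong\duof((C\ast X)\circ Y,Z)\cong\duof(C\ast X,\langle Y,Z\rangle)\cong\duof(C,[X,\langle Y,Z\rangle])$ is exactly this argument run in the converse direction. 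What you add is a conceptual wrapper: identifying $W\ast A$ as the tensor of the $\duof_{h}$-category $\duof$, recognizing $\duof(J,-)$ as change of base to the underlying $\ev$-category, and invoking the standard fact that a $\ev$-functor out of a tensored $\ev$-category has an enriched right adjoint iff its underlying functor does and it preserves tensors. This packaging explains \emph{why} condition (i) is the $W=J$ shadow of closedness and why (ii) is precisely ``$-\hat\circ Y$ and $X\hat\circ-$ preserve tensors,'' at the cost of importing Kelly's tensored-adjoint theorem over a non-symmetric base, where the one-sided bookkeeping you flag at the end (and the $\duof_{h}$-naturality of the isomorphisms built from $\gamma$, which the paper also leaves implicit) genuinely has to be checked. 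Both routes are sound; the paper's is more elementary, yours is more transparent about where each hypothesis is used.
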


\begin{proof}
To say  $(\duof , \hat{\circ} ,  \ulcorner\mathbf{1}\urcorner )$ is left closed is to say we have a ``left hom'' $\lom (X,Y)$ and an $\duof_{h}$-natural isomorphism
$$[X\circ Y,Z]\cong [X,\lom (Y,Z)].$$
By Yoneda, this amounts to a $\ev$-natural isomorphism
$$\duof (W, [X\circ Y,Z]) \cong \duof (W,[X,\lom (Y,Z)]).$$
Since $[\cdot,\cdot]$ is the horizontal left hom for $\duof$, this amounts to
\begin{eqnarray}\label{lomislefthom}\duof (W\ast (X\circ Y),Z) \cong \duof (W\ast X, \lom (Y,Z)).\end{eqnarray}
Taking $W=J$, we obtain
$$\duof (X\circ Y,Z) \cong \duof (X,\lom (Y,Z)),$$
showing that $\lom$ is a left hom for $\duof_{v}$ as a monoidal $\ev$-category.
So (i) is implied.
Now we have this, we can rewrite (\ref{lomislefthom}) as
$$\duof (W\ast (X\circ Y), Z) \cong \duof ((W\ast X)\circ Y, Z)$$
which, again by Yoneda, is equivalent to 
\begin{eqnarray}W\ast (X\circ Y) \cong (W\ast X)\circ Y. \end{eqnarray}
Similarly, to say $(\duof , \hat{\circ} , \mathbf{1})$ is right closed means
$$[X\circ Y, Z] \cong [ Y,\rom (X,Z)], $$
which means 
$$\duof (W\ast (X\circ Y) , Z) \cong \duof (W\ast Y, \rom (X,Z)).$$
Taking $W=J$, we see that $\rom$ is a right hom for $\duof_{v}$, and this leads to
\begin{eqnarray} W\ast (X\circ Y)\cong X\circ (W\ast Y) .\end{eqnarray}
This completes the proof. \qed
\end{proof}

\begin{remark}
Under the condition of Proposition \ref{closureprop}, it follows that the $\duof_{h}$-functors
$$\xymatrix{-\ast X,\quad -\circ X, \quad X\circ -\quad : \duof_{h} \ar[r] & \duof_{h}}$$
all preserve weighted colimits.
\end{remark}

\begin{proposition}\label{closurepropmodified}
For any duoidal $\ev$-category $\duof$, condition $(ii)$ of Proposition \ref{closureprop} is equivalent to
\begin{itemize}
\item[(ii)$'$] there exist $\ev$-natural isomorphisms
\begin{eqnarray}\label{modifiediso}
X\ast (J\circ Y) \cong X\circ Y\cong Y\ast (X\circ J).
\end{eqnarray}
\end{itemize}
\end{proposition}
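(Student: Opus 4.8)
The plan is to obtain each of $(ii)$ and $(ii)'$ from the other by formal manipulation alone. For $(ii)\Rightarrow(ii)'$ I would specialise the two hypothesised $\ev$-natural families at the horizontal unit $J$ and simplify using the unit constraint $-\ast J\cong\mathrm{id}$ of the monoidal $\ev$-category $\duof_{h}$. For $(ii)'\Rightarrow(ii)$ I would reverse those specialisations and insert a single reassociation by the associativity constraint of $\ast$. Since every isomorphism constructed is a composite of $\ev$-natural isomorphisms, $\ev$-naturality in the remaining variables is automatic and there are no further conditions to check, so the argument is entirely formal.

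For $(ii)\Rightarrow(ii)'$, I would start from the right-hand isomorphism of $(ii)$, namely $W\ast(X\circ Y)\cong(W\ast X)\circ Y$, set $X=J$, and use $W\ast J\cong W$ to obtain $W\ast(J\circ Y)\cong W\circ Y$; relabelling $W$ as $X$ gives the left half $X\ast(J\circ Y)\cong X\circ Y$ of $(ii)'$. Symmetrically, starting from the left-hand isomorphism $X\circ(W\ast Y)\cong W\ast(X\circ Y)$ of $(ii)$, I would set $Y=J$ and use $W\ast J\cong W$ to obtain $X\circ W\cong W\ast(X\circ J)$; relabelling $W$ as $Y$ gives the right half $X\circ Y\cong Y\ast(X\circ J)$ of $(ii)'$.

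For $(ii)'\Rightarrow(ii)$, I would run these moves backwards with one reassociation. The right-hand isomorphism of $(ii)$ arises from the chain $W\ast(X\circ Y)\cong W\ast\bigl(X\ast(J\circ Y)\bigr)\cong(W\ast X)\ast(J\circ Y)\cong(W\ast X)\circ Y$, in which the two outer steps apply the left half of $(ii)'$ (first in the variable $X$, then in the variable $W\ast X$) and the middle step is $\ast$-associativity. The left-hand isomorphism of $(ii)$ arises from $X\circ(W\ast Y)\cong(W\ast Y)\ast(X\circ J)\cong W\ast\bigl(Y\ast(X\circ J)\bigr)\cong W\ast(X\circ Y)$, using the right half of $(ii)'$ twice and $\ast$-associativity once.

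I do not expect a real obstacle; the only point needing a moment's care is to confirm that each specialisation of a hypothesised $\ev$-natural family remains $\ev$-natural in the surviving variables, and that the unit and associativity isomorphisms used are the coherent constraints of $\duof_{h}$, so that no hidden compatibility is being imposed. One may then record, as in the introduction, that combining this equivalence with Proposition~\ref{closureprop} yields the interpolation isomorphisms expressing $\circ$ in terms of $\ast$ together with $J\circ-$ or $-\circ J$.
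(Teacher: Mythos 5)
Your proof is correct and follows essentially the same route as the paper: the forward direction specialises the two isomorphisms of (ii) at $J$ exactly as the paper does, and the reverse direction uses the same two three-step chains (two applications of an isomorphism from (ii)$'$ bracketing one $\ast$-associativity) that appear in the paper's displayed computation. No gaps.
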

\begin{proof}
(ii)$\Longrightarrow$(ii)$'$ The second isomorphism of (ii)$'$ comes from the first isomorphism of (ii) with $Y=J$ and $W$ replaced by $Y$.
The first isomorphism of (ii)$'$ comes from the second isomorphism of (ii) with $X=J$ and $W$ replaced by $X$.

\noindent (ii)$'\Longrightarrow$ (ii) Using (ii)$'$, we have 
\begin{eqnarray*}
X\circ (W\ast Y) &\cong & (W\ast Y)\ast (X\circ J) \\
& \cong & W\ast (Y\ast (X\circ J)) \\
& \cong & W\ast (X\circ Y),\textrm{ and} \\
(W\ast X)\circ Y & \cong & (W\ast X)\ast (J\circ Y) \\
& \cong & W\ast (X\ast (J\circ Y)) \\
& \cong & W\ast (X\circ Y). \qed
\end{eqnarray*}
\end{proof}

\begin{theorem}
Suppose $\duof$ is a duoidal $\ev$-category which is horizontally left closed, has equalizers, and satisfies condition (ii)$'$ of Proposition \ref{closurepropmodified}.
Suppose $M$ is a bimonoid in $\duof$ and regard $\duof^{\ast M}$ as a monoidal $\duof_{h}$-category as in Theorem \ref{bimonoidisoclassmonoidal}.
The following conditions are equivalent:
\begin{enumerate}
\item[(i)] $M$ is a (left, right) Hopf bimonoid;
\item[(ii)] $-\ast M$ is a (left, right) Hopf opmonoidal monad on $\duof_{h}$.
\end{enumerate}
If $\duof_{v}$ is a closed monoidal $\ev$-category then these conditions are also equivalent to
\begin{enumerate}
\item[(iii)] $\duof^{\ast M}$ is (left, right) closed and $U_{M}:\duof^{\ast M} \longrightarrow \duof_{h}$ is strong (left, right) closed.
\end{enumerate}
\end{theorem}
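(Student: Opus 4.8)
The plan is to reduce everything to the "abstract" Hopf opmonoidal monad theory of Brugui\`eres--Lack--Virelizier \cite{BLV} (enriched as in Cao--Lack--Street \cite{CLS}), with the duoidal structure supplying the opmonoidal monad and the left/right fusion operators of that theory coinciding, up to canonical isomorphism, with the fusion operators $v_\ell, v_r$ of the bimonoid $M$. First I would observe that condition (ii)$'$ of Proposition \ref{closurepropmodified} is exactly what is needed for Proposition \ref{closureprop} to apply (once $\duof_v$ is assumed closed for the clause involving (iii)), so that $(\duof,\hat\circ,\ulcorner\mathbf 1\urcorner)$ is a closed monoidal $\duof_h$-category; this is the ambient closed monoidal $\duof_h$-category on which $-\ast M$ will act as an opmonoidal monad. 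Next I would record that, since $M$ is a bimonoid, the comonoid structure $(M,\delta,\epsilon)$ for $\circ$ makes the monad $-\ast M$ on $\duof_h$ opmonoidal with respect to the vertical tensor $\circ$: the opmonoidal structure map $\varphi:(X\circ Y)\ast M \to (X\ast M)\circ(Y\ast M)$ is $\gamma\circ(1\ast\delta)$ and the counit map is $1\ast\epsilon:\mathbf 1\ast M\to\mathbf 1$, the opmonoidality axioms being precisely the bimonoid axioms of Definition \ref{bimonoidinf} together with the duoidal axioms (\ref{gammaaxiom1})--(\ref{muaxiom}). This is the "duoidal version" of the well-known fact that a bimonoid gives an opmonoidal monad.

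For the equivalence (i)$\Leftrightarrow$(ii), I would simply unwind the definition of left/right Hopf for the opmonoidal monad $T=-\ast M$ and compare with the definition of left/right Hopf bimonoid. The displayed composites (\ref{lefthopfv}) and (\ref{righthopfv}) already give $v_\ell(X,Y)$ and $v_r(X,Y)$ for $T=-\ast M$; the claim is that $v_\ell(X,Y)$ is invertible for all $X,Y$ if and only if the bimonoid fusion morphism $v_\ell=(\ell\circ\mu)\gamma(1\ast\delta):(J\circ M)\ast M\to M\circ M$ is invertible (and similarly on the right). One direction is immediate: take $X=J$, $Y=J$ (using the unit isomorphisms and the comonoid counit $\epsilon$ appropriately, or rather $X=\mathbf 1_{\text{-ish}}$) — more precisely, setting $X = J$ and $Y = J$ in $v_\ell(X,Y)$, and simplifying with $J\ast M\cong M$ and $J\circ M$, recovers $v_\ell$ up to canonical isomorphism, so (ii)$\Rightarrow$(i). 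For the converse I would use that, under condition (ii)$'$, every object of $\duof_h$ is a weighted colimit of copies of the representables $\ulcorner J\urcorner$ (Proposition \ref{cornerjisdense} says $\ulcorner J\urcorner:{\cal J}\to\duof_h$ is dense), and that all the $\duof_h$-functors in sight — $-\ast M$, $-\circ X$, $X\circ-$, $-\ast M$ iterated — preserve weighted colimits (the Remark after Proposition \ref{closureprop}, which needs Proposition \ref{closurepropmodified} to be in force). Then $v_\ell(X,Y)$ is, $\duof_h$-naturally in $X$ and $Y$, a map between two colimit-preserving functors of $X$ and of $Y$; by Proposition \ref{gammaisatwocell} it is an $\duof_h$-natural transformation, and Proposition \ref{thetacolimisinv} lets us conclude invertibility of $v_\ell(X,Y)$ for all $X,Y$ from invertibility at the generating objects $X=Y=J$, i.e. from invertibility of $v_\ell$. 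The right-handed case is symmetric, using the second isomorphism of (ii)$'$.

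Finally, for (ii)$\Leftrightarrow$(iii) under the hypothesis that $\duof_v$ is closed, I would invoke directly the enriched Brugui\`eres--Lack--Virelizier theorem as recorded in \cite{CLS}: for an opmonoidal monad $T$ on a closed monoidal $\ev$-category (here the closed monoidal $\duof_h$-category $(\duof,\hat\circ,\ulcorner\mathbf 1\urcorner)$, which is closed by Proposition \ref{closureprop} given $\duof_v$ closed and (ii)$'$), the Eilenberg--Moore category $\bx^T$ is left closed and the forgetful functor $U_T:\bx^T\to\bx$ is strong left closed if and only if $T$ is left Hopf, and dually on the right. Since $\duof^{\ast M}=\duof_h^{\,T}$ for $T=-\ast M$ (equalizers exist by hypothesis, so the Eilenberg--Moore $\duof_h$-category exists), and its monoidal $\duof_h$-structure is the one coming from the opmonoidal monad — which is the same as the one in Theorem \ref{bimonoidisoclassmonoidal} via the identification $\textrm{end mod }M\cong M$ of (\ref{endmodmism}) — condition (iii) is literally the conclusion of that theorem, and (ii) is its hypothesis. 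The main obstacle I anticipate is the bookkeeping in step two of the previous paragraph, namely checking carefully that under (ii)$'$ the generation of arbitrary objects from $J$ by colimits is compatible with the two-variable nature of $v_\ell(X,Y)$ and with the $\duof_h$-naturality of $\gamma$ as in Proposition \ref{gammaisatwocell}, so that Proposition \ref{thetacolimisinv} really applies in each variable separately; and, secondarily, verifying that the monoidal $\duof_h$-category structure on $\duof^{\ast M}$ built from the opmonoidal monad agrees with the one produced by the Tannakian machinery of Section \ref{revisitetannakaathome}, so that the word "closed" in (iii) refers to the correct structure.
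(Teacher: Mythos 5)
Your proposal is correct and follows essentially the same route as the paper: (ii)$\Leftrightarrow$(iii) by citing [BLV]/[CLS], (ii)$\Rightarrow$(i) by specializing $X=Y=J$ in (\ref{lefthopfv}), and (i)$\Rightarrow$(ii) by combining the density of $\ulcorner J\urcorner$ (Proposition \ref{cornerjisdense}), the colimit-preservation of $X\circ -$, $-\circ X$, $-\ast X$ derived from condition (ii)$'$, the $\duof_h$-naturality of $\gamma$ (Proposition \ref{gammaisatwocell}), and Proposition \ref{thetacolimisinv}. The only discrepancies are cosmetic (e.g.\ the attribution of [CLS]).
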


\begin{proof}
(ii) $\Longleftrightarrow$ (iii) under the extra condition on $\duof_{v}$ by [BLV] as extended by [CLS]. \\
(ii) $\Longrightarrow$ (i) by taking $X=Y=J$ in (\ref{lefthopfv}), we see that $v_{\ell} (X,Y) = v_{\ell}$. \\
(i) $\Longrightarrow$ (ii) Proposition \ref{closurepropmodified} (ii)$'$ and associativity of $\ast$ yield the isomorphisms
$$X\circ (Y\ast J)\cong Y\ast (X\circ J),$$
$$(Y\ast J)\circ X\cong Y\ast (J\circ X),\textrm{ and}$$
$$(Y\ast J)\ast X\cong Y\ast (J\ast X),$$
showing that $X\circ -$, $-\circ X$ and $-\ast X$ preserve the canonical weighted colimit of Proposition \ref{cornerjisdense} (since colim$(W,S) \cong W\ast S$ when $S: {\cal J}\longrightarrow\duof_{h}$).

Using Proposition \ref{gammaisatwocell}, we see that $v_{\ell} (X,Y)$ is an $\duof_{h}$-natural transformation, in the variables $X$ and $Y$, between two $\duof_{h}$-functors that preserve weighted colimits of the form
$$\textrm{colim}(Z,J)\cong Z\ast J \cong Z.$$
By Proposition \ref{thetacolimisinv}, $v_{\ell}(X,Y)$ is invertible if $v_{\ell}(J,J) = v_{\ell}$ is.\qed
\end{proof}

\begin{example}
Any braided closed monoidal $\ev$-category $\duof$, regarded as duoidal by taking both $\ast$ and $\circ$ to be the monoidal structure given on $\duof$, is an example satisfying the conditions of Proposition \ref{closureprop}.
\end{example}

\begin{remark}
One reading of Proposition \ref{closurepropmodified} (ii)$'$ is that, to know $\circ$ we only need to know $\ast$ and either $J\circ -$ or $-\circ J$.
Proposition \ref{closureprop} (ii) also yields 
\begin{eqnarray}\label{tobededucedfrom}
Y\circ (W\ast \mathbf{1} ) \cong W\ast Y \cong (W\ast \mathbf{1})\circ Y
\end{eqnarray}
showing that to know $\ast$ we only need to know $\circ$ and $-\ast\mathbf{1}$.
From (\ref{modifiediso}) we deduce 
\begin{eqnarray}
\mathbf{1}\ast (J\circ X) \cong X \cong \mathbf{1}\ast (X\circ J)
\end{eqnarray}
and from (\ref{tobededucedfrom}) we deduce
\begin{eqnarray}
J\circ (X\ast\mathbf{1} ) \cong X \cong (X\ast \mathbf{1} ) \circ J
\end{eqnarray}
showing each of the composites
\begin{eqnarray}\label{theequivalences}
\xymatrix{\duof \ar[r]^-{-\circ J} & \duof \ar[r]^-{\mathbf{1}\ast -} & \duof, && \duof \ar[r]^-{J\circ -} & \duof \ar[r]^-{\mathbf{1}\ast -} & \duof, \\ \duof \ar[r]^-{-\ast\mathbf{1}} & \duof \ar[r]^-{J\circ -} & \duof, && \duof \ar[r]^-{-\ast\mathbf{1}} & \duof \ar[r]^-{-\circ J} & \duof }
\end{eqnarray}
to be isomorphic to the identity $\ev$-functor of $\duof$.
From the first and last of these we see that $-\circ J$ is an equivalence and
\begin{eqnarray}
\mathbf{1}\ast - \cong -\ast \mathbf{1}
\end{eqnarray}
both sides being inverse equivalences for $-\circ J$.
From the second of (\ref{theequivalences}) it then follows that $\mathbf{1}\ast -$ is an inverse equivalence for $J\circ -$.
Consequently
\begin{eqnarray}
J\circ - \cong -\circ J.
\end{eqnarray}
\end{remark}

\section{Warped monoidal structures}\label{warpedmonstructsect}

Let $\ba = (\ba , \otimes , I )$ be a monoidal category.
The considerations at the end of Section \ref{hopfbimonoids} suggest the possibility of defining a tensor product on $\ba$ of the form
$$A\warmon B = TA\otimes B$$
for some suitable functor $T:\ba\longrightarrow\ba$.
In the case of Section \ref{hopfbimonoids}, the functor $T$ was actually an equivalence but we will not assume that here in the first instance.

A \emph{warping of $\ba$} consists of the following data:
\begin{itemize}
\item[(a)] a functor $T:\ba\longrightarrow\ba$;
\item[(b)] an object $K$ of $\ba$;
\item[(c)] a natural isomorphism
$$\xymatrix{v_{a,b} : T(TA\otimes B) \ar[r] & TA\otimes TB};$$
\item[(d)] an isomorphism
$$\xymatrix{v_{0} : TK \ar[r] & I};\textrm{ and}$$
\item[(e)] a natural isomorphism
$$\xymatrix{k_{A} : TA\otimes K \ar[r] & A};$$
\end{itemize}
such that the following diagrams commute.
\begin{eqnarray}\label{warpassoc}\xymatrix{
T(TA\otimes B)\otimes TC \ar[rr]^-{v_{a,b}\otimes 1} && (TA\otimes TB)\otimes TC \ar[d]^-{a_{TA,TB,TC}} \\
T(T(TA\otimes B)\otimes C) \ar[u]^-{v_{TA\otimes B , C}} \ar[d]_-{T(v_{a,b}\otimes 1)} && TA\otimes (TB\otimes TC) \\
T((TA\otimes TB)\otimes C) \ar[dr]_-{Ta_{TA,TB,C}\phantom{AA}} && TA\otimes T(TB\otimes C) \ar[u]_-{1\otimes v_{b,c}} \\
& T(TA\otimes (TB\otimes C)) \ar[ru]_-{\phantom{AA}v_{A,TB\otimes C}} &
}\end{eqnarray}
\begin{eqnarray}\label{warpunit}\xymatrix{
T(TA\otimes K) \ar[d]_-{Tk_{A}} \ar[rr]^-{v} && TA\otimes TK \ar[d]^-{1\otimes v_{0}} \\
TA && TA\otimes I \ar[ll]^-{r_{TA}}
}\end{eqnarray}

\begin{remark}
If $T:\ba\longrightarrow\ba$ is essentially surjective on objects and fully-faithful on isomorphisms then all we need to build it up to a warping is $v_{A,B}$ as in (c) satisfying (\ref{warpassoc}).
For $K$ and $v_{0}$ exist by essential surjectivity and $k_{A}$ is defined by (\ref{warpunit}).
\end{remark}

\begin{proposition}\label{liftthisprop}
A warping of $\ba$ determines a monoidal structure on $\ba$ defined by the tensor product 
$$A\warmon B = TA\otimes B$$
with unit object $K$ and coherence isomorphisms
$$\xymatrix{\alpha : T(TA\otimes B)\otimes C \ar[r]^-{v\otimes 1} & (TA\otimes TB)\otimes  C \ar[r]^-{a} & TA\otimes (TB\otimes C)}$$
$$\xymatrix{\ell : TK \ar[r]^-{v_{0}\otimes 1} & I\otimes B \ar[r]^-{\ell} & B}$$
$$\xymatrix{r: TA\otimes K \ar[r]^-{k} & A}.$$
\end{proposition}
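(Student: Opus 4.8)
The plan is to verify that the data $(\warmon, K, \alpha, \ell, r)$ in Proposition~\ref{liftthisprop} satisfies Mac~Lane's pentagon and triangle axioms, reducing each of these to the two coherence diagrams \eqref{warpassoc} and \eqref{warpunit} assumed of the warping, together with the pentagon and triangle for the ambient monoidal structure $(\otimes, I, a, \ell, r)$ and the naturality of $v$, $k$, $\ell$, $r$. First I would unwind the definitions: a typical quadruple tensor $((A\warmon B)\warmon C)\warmon D$ is $T(T(TA\otimes B)\otimes C)\otimes D$, and the two legs of the pentagon for $\warmon$ each become a string of maps built from $v$, $a$, and applications of $T$. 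The natural isomorphism $v$ appears exactly where an inner $T(\,-\otimes -\,)$ must be ``opened up'' into $T(-)\otimes T(-)$, so the $\warmon$-pentagon expands into a diagram whose boundary is the $\warmon$-pentagon and whose interior I would like to tile by instances of \eqref{warpassoc}, the ordinary pentagon for $\otimes$, and naturality squares for $v$.

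The key steps, in order, would be: (1) Check $\alpha$, $\ell$, $r$ are natural isomorphisms --- immediate since each is a composite of natural isomorphisms ($v$, $a$, $v_0$, $\ell$, $k$) whiskered by functors. (2) Prove the triangle axiom $(1\warmon \ell)\circ \alpha = r\warmon 1$ for $\warmon$: expand both sides into $\otimes$-language, where the left side involves $v_{A, K\otimes B}$ or $v_{TA\otimes K, B}$ depending on bracketing, and reduce it to \eqref{warpunit} together with the ordinary triangle for $\otimes$ and naturality of $r$ and $v$. (3) Prove the pentagon: write out the five-term cycle for $\warmon$, substitute the definition of $\alpha$ everywhere, and show the resulting large diagram of $T$'s, $v$'s and $a$'s commutes. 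I expect this to decompose as the outer boundary being filled by one copy of \eqref{warpassoc} (which is precisely the associativity coherence for $v$ built to make $\warmon$ associative in a coherent way), one copy of the $\otimes$-pentagon, and several naturality squares for $v$ (for instance $v$ applied to $v_{a,b}\otimes 1$ versus $v$ on the target). (4) Conclude that $(\ba, \warmon, K, \alpha, \ell, r)$ is a monoidal category.

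I would present this mostly by indicating the reductions rather than drawing every pasting diagram, since the computations are routine once the substitution of $\alpha$'s definition is made; the role of \eqref{warpassoc} and \eqref{warpunit} is to be exactly the ``missing'' coherence that the ordinary $\otimes$-axioms cannot supply. It is worth remarking that this is a decategorified shadow of Proposition~\ref{warpmonoidale} (warping a monoidale in a monoidal bicategory), so an alternative, slicker route is to observe that a warping of $\ba$ in the present sense is exactly a warping of the monoidale $\ba$ in the monoidal $2$-category $\mathrm{Cat}$, and invoke Proposition~\ref{warpmonoidale}; but since that proposition is stated later and at the bicategorical level, I would give the direct elementary verification here and note the connection in a remark.

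The main obstacle is purely bookkeeping: the pentagon for $\warmon$, once the definition of $\alpha$ is substituted, becomes a diagram with on the order of a dozen nodes, and one must choose the tiling carefully so that each region is manifestly an instance of \eqref{warpassoc}, the $\otimes$-pentagon, or a naturality square --- in particular getting the whiskering functors and the placement of the ``opening'' isomorphisms $v$ right. There is no conceptual difficulty, but it is the step most prone to sign/placement errors, so I would allot most of the proof to laying out that pasting explicitly (or, in a terse version, asserting that it follows by a routine diagram chase from \eqref{warpassoc}, \eqref{warpunit}, and coherence for $\otimes$).
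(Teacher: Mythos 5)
Your proposal is correct and matches the paper's proof, which simply observes that the pentagon for $\warmon$ is obtained from diagram (\ref{warpassoc}) by applying $-\otimes D$ (together with ordinary coherence for $\otimes$ and naturality), and the unit triangle from (\ref{warpunit}) by applying $-\otimes B$. Your more detailed account of the tiling, and your remark on deducing the result from the monoidale version, are consistent with (indeed more explicit than) what the paper does.
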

\begin{proof}
The pentagon condition for $\warmon$ is obtained from (\ref{warpassoc}) by applying $-\otimes D$.
Similarly, the unit triangle is obtained from (\ref{warpunit}) by applying $-\otimes B$.\qed
\end{proof}

In investigating when $\otimes$ and $\warmon$ together formed a duoidal structure on $\ba$, we realized we could use a lifting of Proposition \ref{liftthisprop} to a monoidal bicategory $\bm$.
We now describe this lifted version.
The duoidal structure formed by $\otimes$ and $\warmon$ will be explained in an example.

A \emph{warping of a monoidale} $A= (A,m,i)$ in a monoidal bicategory $\bm$ consists of
\begin{itemize}
\item[(a)] a morphism $t: A \longrightarrow A$;
\item[(b)] a morphism $k: I\longrightarrow A$;
\item[(c)] an invertible $2$-cell
$$\xymatrix{
& A\otimes A \ar[rr]^-{m}_<<<<<<<<<<<{\phantom{A}}="1" && A \ar[dr]^-{\phantom{A}t}& \\ 
A\otimes A \ar[ru]^-{t\otimes 1} \ar[rr]_-{t\otimes t} \ar@{}[rrrr]^<<<<<<<<<<<<<<<<<<<<<<<<<<<<<<{\phantom{A}}="2" \ar@{=>}"1";"2"^-{\phantom{a}v} && A\otimes A \ar[rr]_-{m} && A\,\, ; \\ 
}$$
\item[(d)] an invertible $2$-cell
$$\xymatrix{
\ar@{}[rrrr]_<<<<<<<<<<<<<<<<<<<<<<*!/d1.5pt/{\labelstyle{\phantom{A}}}="1" && A \ar[rrd]^-{t}  && \\
I \ar[rru]^-{k} \ar[rrrr]_-{i}^-{\phantom{a}}="2" \ar@{=>}"1";"2"^-{\phantom{a}v_{0}} &&&& A\,\,;
}$$
\item[(e)] an invertible $2$-cell
$$\xymatrix{
\ar@{}[rrrr]_<<<<<<<<<<<<<<<<<<<<<<*!/d1.5pt/{\labelstyle{\phantom{A}}}="1" && A \ar[rrd]^-{m}  && \\
I \ar[rru]^-{t\otimes k} \ar[rrrr]_-{1}^-{\phantom{a}}="2" \ar@{=>}"1";"2"^-{\phantom{a}\kappa} &&&& A\,\,;
}$$
\end{itemize}
satisfying

\begin{eqnarray}\label{superbigdiag1}
\xymatrix{
&& A^{\otimes 3} \ar[rr]^-{m\otimes 1}_<*!/d12pt/{\labelstyle{\phantom{A}}}="5" && A^{\otimes 2} \ar@/^/[rrdd]^-{t\otimes 1} \ar[dd]^-{t\otimes 1} \ar@/^8ex/@<+0.3ex>[dddd]^-{t\otimes t}^>>>{\phantom{AAa}}="10" && \\
&&&&&& \\
A^{\otimes 3} \ar@/^/[rruu]^-{t\otimes 1\otimes 1} \ar@/_/[rrdd]_-{t\otimes t\otimes t}^>>>>>>*!/u23pt/{\labelstyle{\cong}} \ar[rr]_-{t\otimes t\otimes 1}  && A^{\otimes 3} \ar[rr]^<*!/u12pt/{\labelstyle{\phantom{A}}}="6"^-{m\otimes 1}="3" \ar@{=>}"5";"6"^-{\phantom{A}v\otimes 1} \ar[dd]^-{1\otimes t\otimes t} && A^{\otimes 2} \ar[dd]^-{1\otimes t} \ar@{}[rr]|<<<{\cong} && A^{\otimes 2} \ar[dd]^-{m}_>>{\phantom{AAa}}="11" \ar@{=>}"11";"10"_-*!/u3pt/{\labelstyle{v}} \\
&&&&&& \\
&& A^{\otimes 3} \ar[dd]_-{1\otimes m}^{\phantom{m}}="1" \ar[rr]_{m\otimes 1}="4" \ar@{}"3";"4"|-{\cong} && A^{\otimes 2} \ar[dd]_-{m}="2" \ar@{}"1";"2"|-{\cong}_*!/d2pt/{\labelstyle{\alpha}} && A \ar@/^/[lldd]^-{t} \\
&&&&&& \\
&& A^{\otimes 2} \ar[rr]_-{m} && A &&
} 
\end{eqnarray}

\begin{eqnarray*}=\end{eqnarray*}

\begin{eqnarray*}
\xymatrix{
A^{\otimes 3} \ar[rr]^-{m\otimes 1}_>>>>>>*!/d16pt/{\labelstyle{\phantom{A}}}="1" && A^{\otimes 2} \ar[rr]^-{t\otimes 1} && A^{\otimes 2} \ar[rrdd]^-{m} \ar@{}[dd]|<<<<<<<<<<<{\cong}^<<<<<<<<<<<{\phantom{A}\alpha} && \\
&&&&&& \\
&& A^{\otimes 3} \ar[rruu]^-{m\otimes 1} \ar[rr]^-{1\otimes m} \ar@{}[dd]^-{\phantom{A}\cong} && A^{\otimes 2} \ar[rr]^-{m}_-*!/d20pt/{\labelstyle{\phantom{A}}}="5" && A \ar[dd]^-{t} \\
&&&&&& \\
A^{\otimes 3} \ar[uuuu]^-{t\otimes 1\otimes 1} \ar[rruu]^-{t\otimes t\otimes 1}^>>>>>>>*!/u20pt/{\labelstyle{\phantom{A}}}="2" \ar@{=>}"1";"2"_-{v\otimes 1\phantom{A}} \ar@/_+7ex/@<-1.5ex>[dddd]_-{t\otimes t\otimes t}^-{\phantom{Aa}\cong} \ar[rr]^-{1\otimes t\otimes 1} \ar[dd]^-{t\otimes 1\otimes 1} && A^{\otimes 3} \ar[rr]^-{1\otimes m} \ar[dd]^-{t\otimes 1\otimes 1}_{\cong\phantom{AAAAA}} && A^{\otimes 2} \ar[uu]^-{t\otimes 1} \ar[rrdd]^-{t\otimes t} \ar@{}[dd]_-{\cong\phantom{A}} \ar@{}[rr]^-{\phantom{A}}="6" \ar@{=>}"5";"6"_-{v\phantom{A}} && A \\
&&&&&& \\
A^{\otimes 3} \ar[rr]_-{1\otimes t\otimes 1} \ar[dd]^-{1\otimes t\otimes t} && A^{\otimes 3} \ar[rr]_-{1\otimes m}_<<*!/d7pt/{\labelstyle{\phantom{A}}}="3" && A^{\otimes 2} \ar[rr]_-{1\otimes t} && A^{\otimes 2} \ar[uu]_-{m} \\
&&&&&& \\
A^{\otimes 3} \ar@/_+3ex/[rrrrrruu]_-{1\otimes m}  && \ar@{}[rr]^<<<<<*!/u16pt/{\labelstyle{\phantom{A}}}="4" \ar@{=>}"3";"4"_-{1\otimes v\phantom{A}} &&&&
} 
\end{eqnarray*}

and

\begin{eqnarray}\label{superbigdiag2}
\xymatrix{
A \ar[dd]_-{t}="3" \ar[rrrr]_-{1\otimes k} \ar@/^+5ex/@<+1ex>[rrrrrr]^-{t\otimes k}_-*!/d8pt/{\labelstyle{\cong}} &&&& A^{\otimes 2} \ar[dd]^-{t\otimes t}="4" \ar@{}"3";"4"|-{\cong} \ar[rr]_-{t\otimes 1}_<<<<<<<<<<*!/d19pt/{\labelstyle{\phantom{A}}}="7" && A^{\otimes 2} \ar[dd]^-{m} \\
&&&&&&\\
A \ar@{}[rrrr]_<<<<<<<<<<<<<<<<<<<<<<<(0.005){\phantom{.}}="1" \ar[rr]^-{1\otimes k} \ar@/_+5ex/@<-1ex>[rrrrrrdd]_-{1}  \ar@/_+6ex/@<-0.5ex>[rrrr]_-{1\otimes i}="5"^-{\phantom{.}}="2" \ar@{=>}"1";"2"^-{\phantom{A}1\otimes v_{0}} && A^{\otimes 2} \ar[rr]^-{1\otimes t} && A^{\otimes 2} \ar[rrdd]_-{m}="6" \ar@{}"5";"6"|-{\cong}_-*!/d2pt/{\labelstyle{\rho}} \ar@{}[rr]^<<<<<<<<<<*!/u2pt/{\labelstyle{\phantom{A}}}="8" \ar@{=>}"7";"8"^-{\phantom{A}v} && A \ar[dd]^-{t} \\
&&&&&& \\
&&&&&& A
}
\end{eqnarray}

\begin{eqnarray*}
=
\end{eqnarray*}

\begin{eqnarray*}
\xymatrix{
\ar@{}[rrrr]_-*!/d16pt/{\labelstyle{\phantom{A}}}="1" && A^{\otimes 2} \ar[rrdd]^-{m} && \\
&&&&\\
A \ar[rruu]^-{t\otimes k} \ar[dd]_-{t}="3" \ar[rrrr]_-{1}^-*!/u8pt/{\labelstyle{\phantom{A}}}="2" \ar@{=>}"1";"2"^-{\phantom{a}\kappa} &&&& A \ar[dd]^-{t}="4" \ar@{}"3";"4"|-{\cong} \\
&&&& \\
A \ar[rrrr]_-{1} &&&& A
}
\end{eqnarray*}

\begin{proposition}\label{warpmonoidale}
A warping of a monoidale $A$ determines a monoidale structure on $A$ defined by
$$\xymatrix{_{t}m : A\otimes A \ar[r]^-{t\otimes 1} & A\otimes A \ar[r]^-{m} & A}$$
$$\xymatrix{I \ar[rr]^-{k} && A}$$
$$\xymatrix{
A^{\otimes 3} \ar[dddd]_-{1\otimes t\otimes 1\phantom{A}} \ar[rrrdd]_-{t\otimes t\otimes 1}="1" \ar[rr]^-{t\otimes 1\otimes 1} && A^{\otimes 3} \ar[rr]^-{m\otimes 1}_-*!/d7pt/{\labelstyle{\phantom{A}}}="3" && A^{\otimes 2} \ar[rr]^-{t\otimes 1} && A^{\otimes 2} \ar[dddd]^-{m} \\
&&&&&&\\
&&& A^{\otimes 3} \ar[rrruu]_-{m\otimes 1}="2" \ar@{}"1";"2"_-{\phantom{A}}="4" \ar@{=>}"3";"4"^-{\phantom{A} v\otimes 1} \ar[rdd]_-{1\otimes m} &&& \\
&&&&&&\\
A^{\otimes 3} \ar[rr]_-{1\otimes m} && A^{\otimes 2} \ar[rr]_-{t\otimes 1} \ar@{}[uu]^>>>>>>>{\cong\phantom{AA}} && A^{\otimes 2} \ar[rr]_-{m} \ar@{}[rruuuu]|<<<<<<<<<<<<<<<<<<<<{\cong}_-{\alpha} && A
}$$

$$\xymatrix{
&& A^{\otimes 2} \ar[rr]^-{t\otimes 1}_<{\phantom{a}}="1" && A^{\otimes 2} \ar[rrd]^-{m}="4"&& \\
A \ar[rru]^-{k\otimes 1} \ar@/_4ex/[rrrru]_-{i\otimes 1}="3"^<<<<<<<<<<<<<<(0.01){\phantom{a}}="2" \ar@{=>}"1";"2"_-{v_{0}\otimes 1\phantom{a}} \ar@/_3ex/@<-1ex>[rrrrrr]_-{1}  \ar@{}"3";"4"|-{\phantom{AA}\cong}_-{\phantom{AAA}\lambda}  &&&&&& A
}$$

$$\xymatrix{
A \ar[rr]^-{t\otimes k} \ar@/_5ex/@<-1ex>[rrrr]_-{1}^-*!/d4pt/{\labelstyle{\phantom{a}}}="2" \ar@{}[rrrr]|-*!/u4pt/{\labelstyle{\phantom{A}}}="1" \ar@{=>}"1";"2"^-{\phantom{a}\kappa} && A^{\otimes 2} \ar[rr]^-{m} && A
}$$
\end{proposition}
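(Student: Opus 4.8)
The plan is to mimic, at the level of the monoidal bicategory $\bm$, the proof of Proposition \ref{liftthisprop}: the warping axioms (\ref{superbigdiag1}) and (\ref{superbigdiag2}) are the bicategorical incarnations of (\ref{warpassoc}) and (\ref{warpunit}), rigged precisely so that whiskering them on the right by $A$ yields the pentagon and the unit triangle for the new structure. By the coherence theorem of [\ref{GPS}] I would first assume $\bm$ is a Gray monoid, making the minor adjustment for the genuine monoidal-bicategory case exactly as in the proofs of Propositions \ref{canonicalmonoidal} and \ref{liftingtovcat}. The pseudomonoid data on $A$ is then precisely what is displayed in the statement: the multiplication ${}_{t}m = m(t\otimes 1)$, the unit $k$, and the three invertible $2$-cells (associator, left and right unitors) built as the indicated pastings of $v$, $v_{0}$, $\kappa$, the associator and unitors of the original monoidale $A$, and the structural constraints of the Gray monoid; invertibility is immediate since each is a pasting of invertible $2$-cells.

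It then remains to verify the coherence conditions of [\ref{DayStreet}] for a pseudomonoid, namely the pentagon and the unit triangle. The structural point, exactly as in Proposition \ref{liftthisprop}, is that ${}_{t}m({}_{t}m\otimes 1)$, ${}_{t}m(1\otimes{}_{t}m)$ and all their reassociations differ from the corresponding iterated composites of $m$ only by occurrences of $t$ sitting in left tensor slots; hence every pasting involved in the pentagon for ${}_{t}m$ is obtained by applying the $2$-functor $(-)\otimes A : \bm \longrightarrow \bm$ to a pasting over $A^{\otimes 3}$. I would expand the new associator into its $v\otimes 1$ and $a$ constituents on both sides of the pentagon, slide the whiskered copies of $t$ past the (coherent) Gray-monoid interchange isomorphisms, and check that the resulting $2$-cell equation over $A^{\otimes 4}$ is exactly $(-)\otimes A$ applied to (\ref{superbigdiag1}).

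For the unit triangle I would proceed identically: expanding the left and right unitors in terms of $v_{0}\otimes 1$, $\kappa$ and the unitors of $A$, the triangle coherence for $({}_{t}m,k)$ becomes, after whiskering by $(-)\otimes A$ and absorbing the leftover regions into the Gray constraints and the triangle axiom for $A$ itself, exactly the warping axiom (\ref{superbigdiag2}). Any remaining coherence condition listed in [\ref{DayStreet}] reduces in the same manner, or follows formally from the pentagon and triangle together with invertibility of the associator.

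The main obstacle I expect is organisational rather than conceptual: transcribing the two large pasting diagrams (\ref{superbigdiag1}) and (\ref{superbigdiag2}) into the pentagon and triangle for ${}_{t}m$, and bookkeeping the harmless Gray-monoid interchange isomorphisms together with the instances of the associator and unitors of $A$ that have been absorbed into the new coherence $2$-cells. Once both sides of each axiom are displayed as pastings over a common underlying diagram, the verification is forced by the definitions, just as in the one-dimensional case of Proposition \ref{liftthisprop}.
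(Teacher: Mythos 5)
Your proposal is correct and follows essentially the same route as the paper, whose proof is simply the observation that conditions (\ref{superbigdiag1}) and (\ref{superbigdiag2}) yield the two monoidale axioms for $(A,{_{t}m},k)$ — exactly the whiskering-by-$A$ reduction you describe, mirroring how the pentagon and unit triangle were extracted from (\ref{warpassoc}) and (\ref{warpunit}) in Proposition \ref{liftthisprop}. Your additional bookkeeping of the Gray-monoid constraints just makes explicit what the paper leaves implicit.
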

\begin{proof}
Conditions (\ref{superbigdiag1}) and (\ref{superbigdiag2}) yield the two axioms for a monoidale $(A, {_{t}m},k)$.\qed
\end{proof}

\begin{example}
Suppose $\duof$ is a duoidal $\ev$-category satisfying the second isomorphism of (\ref{tobededucedfrom}).
Define a $\ev$-functor $T:\duof\longrightarrow\duof$ by
$$T = -\ast \mathbf{1}\, .$$
The horizontal right unit isomorphism gives
$$T(J) = J\ast\mathbf{1}\cong\mathbf{1}$$
and (\ref{tobededucedfrom}) gives
\begin{eqnarray*}
T(TA\circ B) & = & ((A\ast\mathbf{1})\circ B)\ast\mathbf{1} \\
& \cong & (A\ast B)\ast\mathbf{1} \\
& \cong & A\ast (B\ast\mathbf{1}) \\
& \cong & (A\ast\mathbf{1} )\circ (B\ast\mathbf{1}) \\
& = & TA\circ TB\, .
\end{eqnarray*}
Finally, we have
\begin{eqnarray*}
TA\circ J & = & (A\ast\mathbf{1} )\circ J \\
& \cong & A\ast J \\
& \cong & A\, .
\end{eqnarray*}
This gives an example of a warping in $\bm = \ev$-Cat of the monoidale (= monoidal $\ev$-category) $\duof_{v}$.
Proposition \ref{warpmonoidale} gives back $\duof_{h}$.
\end{example}

\begin{example}
Consider the case of $\bm=\textrm{Mon}(\ev\textrm{-Cat})$.
A monoidale is a duoidal $\ev$-category $(\duof_{h} , \circ , \mathbf{1})$.
A warping of this monoidale consists of a monoidal $\ev$-functor $T:\duof_{h}\longrightarrow\duof_{h}$, a monoid $K$ in $\duof_{h}$, a horizontally monoidal $\ev$-natural isomorphism $v:T(TA\circ B)\cong TA\circ TB$, a horizontal monoid isomorphism $v_{0} : TK\cong \mathbf{1}$, and a horizontally monoidal $\ev$-natural isomorphism $k:TA\circ K\cong A$, subject to the two conditions.
Proposition \ref{warpmonoidale} gives the recipe for obtaining a duoidal $\ev$-category $(\duof_{h} , (T-)\circ - , K)$.
In particular, take $\ev=\,$Set and consider a lax braided monoidal category $\ba = (\ba , \otimes , I , c)$ as a duoidal category; the lax braiding gives the monoidal structure on $\otimes : \ba\times\ba\longrightarrow\ba$.
A warping consists of a monoidal functor $T:\ba\longrightarrow\ba$, a monoid $K$ in $\ba$, a monoidal natural $v:T(TA\otimes B)\cong TA\otimes TB$, a monoid isomorphism $v_{0} : TK\cong I$, and a monoidal natural $k:TA\otimes K\cong A$, satisfying the conditions (\ref{warpassoc}) and (\ref{warpunit}).
Proposition \ref{warpmonoidale} then shows that the recipe of Proposition \ref{liftthisprop} yields a duoidal category $(\ba ,\otimes , I, \warmon , K)$.
\end{example}

\begin{center}
--------------------------------------------------------
\end{center}

\appendix

\end{document}